\definecolor{black}{rgb}{0.0, 0.0, 0.0}
\definecolor{red}{rgb}{1.0, 0.5, 0.5}
\newcommand{\margnote}[1]{
\ifthenelse{\boolean{shownotes}}%
{\marginpar{\raggedright\tiny\texttt{#1}}}%
{}%
}
\newcommand{\hole}[1]{
\ifthenelse{\boolean{shownotes}}%
{\begin{center} \fbox{ \rule {.25cm}{0cm} \rule[-.1cm]{0cm}{.4cm}
\parbox{.85\textwidth}{\begin{center} \texttt{#1}\end{center}} \rule
{.25cm}{0cm}}\end{center}} {} }
\title[From kinetic mixtures to compressible two-phase flow]
{From kinetic mixtures to compressible two-phase flow: \\ A BGK-type model and rigorous derivation}
\author[Cho]{Seung Yeon Cho}
\address[Seung Yeon Cho]{\newline Department of Mathematics\newline
Gyeongsang National University, Jinju 52828, Republic of Korea}
\email{chosy89@gnu.ac.kr}
\author[Choi]{Young-Pil Choi}
\address[Young-Pil Choi]{\newline Department of Mathematics\newline
Yonsei University, Seoul 03722, Republic of Korea}
\email{ypchoi@yonsei.ac.kr}
\author[Hwang]{Byung-Hoon Hwang}
\address[Byung-Hoon Hwang]{\newline Department of Mathematics Education\newline
Sangmyung University, Seoul 03016, Republic of Korea}
\email{bhhwang@smu.ac.kr}
\author[Song]{Sihyun Song}
\address[Sihyun Song]{\newline Department of Mathematics\newline
Yonsei University, Seoul 03722, Republic of Korea}
\email{ssong@yonsei.ac.kr}
\numberwithin{equation}{section}
\newtheorem{theorem}{Theorem}[section]
\newtheorem{lemma}{Lemma}[section]
\newtheorem{proposition}{Proposition}[section]
\newtheorem{remark}{Remark}[section]
\newcommand{\R}{\mathbb R}
\newcommand{\N}{\mathbb N}
\newcommand{\T}{\mathbb T}
\newcommand{\bq}{\begin{equation}}
\newcommand{\eq}{\end{equation}}
\newcommand{\e}{\varepsilon}
\newcommand{\lt}{\left}
\newcommand{\rt}{\right}
\newcommand{\pa}{\partial}
\newcommand{\intr}{\int_{\R^n}}
\def\moverlay{\mathpalette\mov@rlay}
\def\mov@rlay#1#2{\leavevmode\vtop{%
   \baselineskip\z@skip \lineskiplimit-\maxdimen
   \ialign{\hfil$\m@th#1##$\hfil\cr#2\crcr}}}
\newcommand{\charfusion}[3][\mathord]{
    #1{\ifx#1\mathop\vphantom{#2}\fi
        \mathpalette\mov@rlay{#2\cr#3}
      }
    \ifx#1\mathop\expandafter\displaylimits\fi}
\newcommand{\calX}{\mathcal X}
\newcommand{\calM}{\mathcal M}
\newcommand{\calL}{\mathcal L}
\DeclareMathOperator*{\argmin}{arg\,min}
\newcommand{\calE}{\mathcal E}
\newcommand{\p}{\partial}
\newcommand{\ve}{\varepsilon}
\begin{document}
%%%%%%%%%%%%%%%%
\allowdisplaybreaks

\date{\today}

%\subjclass[]{    }
\keywords{Gas mixture, BGK model, Chapman--Enskog expansion, isentropic two-phase fluid, hydrodynamic limit, relative entropy.}

\begin{abstract} 
We propose a BGK-type kinetic model for a binary gas mixture, designed to serve as a kinetic formulation of compressible two-phase fluid dynamics. The model features species-dependent adiabatic exponents, and the relaxation operator is constructed by solving an entropy minimization problem under moments constraints. Starting from this model, we derive the compressible two-phase Euler equations via a formal Chapman--Enskog expansion and identify dissipative corrections of Navier--Stokes type. We then rigorously justify the Euler limit using the relative entropy method, establishing quantitative convergence estimates under appropriate regularity assumptions. Finally, we present numerical experiments based on an implicit-explicit Runge--Kutta method, which confirm the asymptotic preserving property and demonstrate the convergence from the BGK model to the isentropic two-phase Euler system in the hydrodynamic regime.
\end{abstract}

\maketitle \centerline{\date}

\tableofcontents

%%%%%%%%%%%%%%%%%%%%%%%%%%%%%%%%%%%%%%%%%%%%%%%%%%%%%%%%%%%%%%%%%%%%%%%%%%%%%%%%%5
%
%
%                        Section: Introduction 
%
%
%%%%%%%%%%%%%%%%%%%%%%%%%%%%%%%%%%%%%%%%%%%%%%%%%%%%%%%%%%%%%%%%%%%%%%%%%%%%%%%%%
\section{Introduction}

The dynamics of compressible fluids without entropy variations are governed by the \emph{isentropic Euler equations}. Such dynamics arise in a variety of physical settings, including the propagation of sound waves in an ideal gas and the expansion of gases through nozzles under smooth flow conditions. These equations describe the conservation of mass and momentum, with a barotropic pressure law of the form $p \propto \rho^\gamma$, where $\gamma \in [1, \frac{n+2}{n}]$ denotes the adiabatic index and $n$ is the spatial dimension.

To analyze isentropic flow from a kinetic perspective, a line of work initiated by Lions, Perthame, and Tadmor \cite{LPT91,LPT94,LPT94-2} introduced a kinetic formulation of the isentropic Euler equations by solving an auxiliary entropy identity. This approach encodes the macroscopic conservation laws into kinetic equations via a suitable velocity distribution function. In particular, these kinetic representations are instrumental in analyzing entropy solutions, constructing numerical schemes, and justifying hydrodynamic limits.

Building upon this, Bouchut \cite{B99} proposed a general framework for BGK-type kinetic models relaxing toward isentropic Euler equations. The classical BGK model, introduced by Bhatnagar, Gross, and Krook \cite{BGK54}, approximates the Boltzmann equation by a relaxation process toward a local Maxwellian distribution. Bouchut's contribution was to characterize such equilibria via an entropy minimization principle under mass and momentum constraints, thereby ensuring thermodynamic consistency with the target isentropic fluid system.

On the other hand, significant progress has also been made in the kinetic modeling of multi-component gas mixtures. In a seminal work, Andries, Aoki, and Perthame \cite{AAP02} proposed the consistent BGK-type model for inert mixtures. Since then, various strategies have been explored to approximate the Boltzmann collision operator by a BGK relaxation term. Broadly speaking, two main modeling approaches exist: one uses a \emph{single BGK operator} shared across species, while the other mimics the Boltzmann structure via a \emph{sum of interspecies operators}. 

The single-operator approach offers modeling simplicity, provides a framework for modeling more general gaseous systems, and is possibly justified via entropy minimization. Several developments have followed in this direction, for instance, the works of Groppi, Bisi, Brull, and their collaborators for reactive mixtures, polyatomic molecules, and relativistic corrections  \cite{BMS18,BT20,Brull15,BPS12,GMS11,GRS09,HLY24}. The multi-operator strategy, inspired by the original Gross--Krook model \cite{GK56}, allows more flexibility in modeling detailed interspecies dynamics but may be more complex to analyze or simulate. Recently, the consistent models following the multi-operator strategy have been derived by Bobylev, Haack, Klingenberg, Pirner, and  others \cite{BBGSP18,HHM17,KPP17}. See \cite{BKPY21} for the extension to quantum mixtures. For both types of BGK models, we refer to \cite{BKPY22,KLY21,KP18,Pirner18} for the existence of solutions and \cite{CBGR22,CKP20,GRS18} for numerical simulations.

Formal derivations of macroscopic two-phase fluid equations from such kinetic mixture models have also been attempted, often based on asymptotic expansions or moment closure techniques. While these studies provide valuable insights into the structure of the resulting fluid systems, they are typically limited to formal arguments and do not establish rigorous convergence from the kinetic level.

On the other hand, a number of works have rigorously addressed the derivation of macroscopic two-phase models starting from \emph{kinetic-fluid systems}, where a kinetic equation governs the dispersed particle phase and the fluid is described by compressible or incompressible Navier--Stokes or Euler equations. The coupling is usually modeled through drag forces. Among early contributions in this direction, the works of Goudon, Jabin, and Vasseur \cite{GJV04a,GJV04b} analyzed different asymptotic regimes, namely, the light and fine particle limits, which lead to either passive transport models or coupled two-phase Navier--Stokes systems. Han-Kwan and Michel \cite{HKM24} developed a unified framework for high-friction hydrodynamic limits from the Vlasov--Navier--Stokes system using modulated energy and entropy techniques. Related results include \cite{MV08}, which considered the asymptotic behavior of a Vlasov--Fokker--Planck/compressible Navier--Stokes system, and \cite{SWYZ23}, where the relative entropy method was applied to derive a coupled system consisting of an inhomogeneous two-phase fluid system from weak solutions of a kinetic-fluid model. Other notable works include \cite{CCK16,CG06,CJ21,CJ23}, which addressed similar limits in the presence of nonlinear Fokker--Planck operators or electrostatic interactions.

In contrast to these approaches, the present work addresses the derivation of two-phase models starting from a  \emph{kinetic description for both phases} in a binary gas mixture. Specifically, we consider a BGK-type relaxation model in which both species evolve via kinetic equations and interact through the common velocity for mixtures. While formal derivations of two-phase fluid systems from kinetic mixtures have appeared in the literature mentioned above, a complete rigorous justification has, to the best of our knowledge, remained open. This paper provides the first such derivation of the isentropic two-phase Euler equations from a unified kinetic BGK model, using the relative entropy method and making precise the connection between mesoscopic relaxation mechanisms and macroscopic fluid dynamics.

To this end, we construct a single BGK-type model for a binary mixture of gases that serves as a kinetic formulation of the isentropic two-phase fluid system.  Each species is characterized by its own adiabatic exponent $\gamma_i > 1$ and particle mass $m_i > 0$, and the two components are assumed to interact solely through relaxation to a shared bulk velocity field. Our primary aim is to derive, from this kinetic model, a macroscopic system that reflects the structure of the compressible two-phase Euler or Navier-Stokes equations, depending on the scaling regime.

The proposed kinetic system describes a binary mixture through two distribution functions $f_i(t,x,v)$, defined on $(x,v) \in \Omega \times \R^n$ for $t>0$, where $\Omega = \T^n$ or $\R^n$. These distributions evolve according to BGK-type kinetic equations:
\begin{equation}\label{main}
\partial_t f_i + v \cdot \nabla_x f_i = \nu_i \left( \mathcal{M}_i[n_i, u] - f_i \right), \quad i=1,2,
\end{equation}
where $\nu_i > 0$ denotes the collision frequency (or the inverse of the mean relaxation time) for species $i$, which may depend on $n_i$. The right-hand side models the tendency of $f_i$ to relax toward a local equilibrium distribution $\mathcal{M}_i[n_i, u]$, characterized by the number density $n_i$ of species $i$ and a common bulk velocity $u$ shared by both species.

The equilibrium distribution $\mathcal{M}_i[n_i, u](v)$ is designed to be consistent with the thermodynamic structure of isentropic gas dynamics and is obtained via an entropy minimization principle. It takes the form
\[
\mathcal{M}_i[n_i, u](v) =
\begin{cases}
\displaystyle c_i \, \mathbf{1}_{|u-v|^2 \le \frac{2\gamma_i}{\gamma_i - 1} n_i^{\frac 2n}} & \text{if } \gamma_i = \frac{n+2}{n}, \\[2mm]
\displaystyle c_i \left( m_i^{\frac{n}{2}(\gamma_i - 1)} \frac{2\gamma_i}{\gamma_i - 1} n_i^{\gamma_i - 1} - m_i |v - u|^2 \right)_+^{\frac{d_i}{2}} & \text{if } \gamma_i \in \left(1, \frac{n+2}{n} \right).
\end{cases}
\]
Here, the constants $c_i$ and $d_i$ are chosen so that the internal energy and entropy associated with $\mathcal{M}_i$ match the corresponding expressions in isentropic gas dynamics. A detailed derivation of $\mathcal{M}_i$ is provided in Appendix \ref{app_deri}, following the framework of entropy minimization introduced in \cite{B99, LPT94} under suitable constraints. The constants are explicitly given by 
\[
c_i = \left( \frac{2\gamma_i}{\gamma_i - 1} \right)^{-\frac{1}{\gamma_i - 1}} \frac{\Gamma\left( \frac{\gamma_i}{\gamma_i - 1} \right)}{\pi^{n/2} \Gamma\left( \frac{d_i}{2} + 1 \right)}, \qquad
d_i = \frac{2}{\gamma_i - 1} - n,
\]
where $\Gamma$ denotes the Gamma function.  

The macroscopic quantities are defined as follows. The number density $n_i$, mass density $\rho_i$, and bulk velocity $u_i$ of species $i$ are defined by
\[
n_i = \int_{\mathbb{R}^n} f_i(v)\,dv, \qquad \rho_i=m_in_i,\qquad u_i = \frac{1}{n_i} \int_{\mathbb{R}^n} v f_i(v)\,dv,
\]
respectively, with the convention $u_i = 0$ when $n_i = 0$. Then the mass density $\rho$ and bulk velocity $\mathbf{u}$ of mixtures are given by 
\begin{equation}\label{total} 
 \rho=\rho_1+\rho_2,\qquad {\bf{u}}=\frac{\rho_1u_1+\rho_2u_2}{\rho}.
\end{equation}
We define the common bulk velocity $u$, which is the auxiliary parameter of $\mathcal{M}$, as
\[
u=\frac{\sum_{i=1}^2  \nu_i \rho_i u_i}{\sum_{i=1}^2 \nu_i \rho_i}.
\]
To ensure thermodynamic consistency, we introduce the total kinetic entropy
\[
H(f,v) := \sum_{i=1}^2 \nu_i h_i(f_i,v),
\]
where the kinetic entropy $h_i$ for each species is defined by
\[
h_i(f_i,v) =
\begin{cases}
\displaystyle \frac{m_i}{2} |v|^2 f_i + \infty \cdot \mathbf{1}_{f_i > c_i}(v) & \text{if } \gamma_i = \frac{n+2}{n}, \\[2mm]
\displaystyle \frac{m_i}{2} |v|^2 f_i + \frac{1}{2c_i^{2/d_i}} \frac{f_i^{1+2/d_i}}{1+2/d_i} & \text{if } \gamma_i \in \left(1, \frac{n+2}{n} \right).
\end{cases}
\]
It can be verified that the entropy of the equilibrium state satisfies
\[
\int_{\mathbb{R}^n} h_i(\mathcal{M}_i[n_i, u](v), v)\,dv = \frac{\rho_i}{2}  |u|^2 + \frac{m_i^{\frac{n}{2}(\gamma_i - 1)}}{\gamma_i - 1} n_i^{\gamma_i},
\]
which recovers the internal energy and pressure terms in the isentropic gas model. Furthermore, the following entropy minimization principle holds:
\[
\sum_{i=1}^2 \int_{\R^n}  \nu_i h_i(f_i,v)\,dv \geq \sum_{i=1}^2 \int_{\mathbb{R}^n} \nu_i h_i(\mathcal{M}_i[n_i, u](v), v)\,dv,
\]
which yields the dissipation rate in the total entropy and is crucial for the entropy inequality discussed in Section \ref{sec_pre}.

 Our main objective is to rigorously justify the derivation of macroscopic two-phase fluid equations from this kinetic model. We begin by performing a formal Chapman--Enskog expansion under a hydrodynamic scaling, which serves to identify the macroscopic structure associated with the model. This expansion yields the compressible two-phase Euler equations at leading order, and a Navier--Stokes-type correction with density-dependent viscosity at the next order, thus capturing both the conservative and dissipative effects induced by interspecies momentum relaxation.

To complement the formal analysis, we carry out a rigorous derivation of the isentropic two-phase Euler equations using the relative entropy method. This involves constructing a suitable modulated entropy functional and establishing convergence of the kinetic solutions to their macroscopic counterparts under appropriate regularity assumptions. This justifies, for the first time, the hydrodynamic limit from a BGK-type kinetic mixture to a two-phase Euler system in a mathematically rigorous framework.

The rest of the paper is organized as follows. In Section \ref{sec_pre}, we examine the structural properties of the kinetic model, including conservation of mass and momentum, and the entropy dissipation. Section \ref{sec_ce} is devoted to a formal asymptotic analysis via the Chapman--Enskog expansion, which yields a macroscopic two-phase fluid system and reveals the structure of dissipative corrections. In Section \ref{sec_re}, we rigorously derive the isentropic two-phase Euler equations from the kinetic model using the relative entropy method, constructing a modulated entropy functional and establishing convergence under suitable regularity assumptions. Section \ref{sec_numer} presents the numerical scheme, emphasizing the implicit-explicit Runge--Kutta (IMEX-RK) time integration and the asymptotic preserving property of the method. In Section \ref{sec_numer2}, we perform numerical tests comparing solutions of the kinetic model and the isentropic Euler system in the hydrodynamic limit, confirming the consistency of the scheme and supporting the theoretical analysis. Appendix \ref{app_deri} provides the derivation of the truncated Maxwellians via an entropy minimization principle. Appendix \ref{app_vm} contains the detailed proof of Lemma \ref{lem_vm}, establishing key velocity moment identities for the truncated Maxwellians, which are essential for the Chapman--Enskog expansion. Appendix \ref{app_gwp} establishes the global existence of mild solutions to the kinetic model, which is required for the application of the relative entropy method in the rigorous hydrodynamic limit.

%%%%%%%%%%%%%%%%%%%%%%%%%%%%%%%%%%%%%%%%%%%%%%%%%%%%%%%%%%%%%%%%%%%%%%%%%%%%%%%%%%
\section{Structural properties: Conservation laws and entropy dissipation}\label{sec_pre}

The kinetic model \eqref{main} possesses several fundamental structural properties that are consistent with the underlying physics of two-phase flows. In this section, we demonstrate that the system conserves mass and momentum and satisfies an entropy dissipation principle. These properties are essential to ensure that the kinetic formulation behaves consistently with the expected macroscopic dynamics in the hydrodynamic limit.

\begin{enumerate}[label=(\roman*)]
	\item \emph{Mass conservation for each species}: For each species $i = 1,2$, the equation \eqref{main} preserves the total mass over time. Indeed, integrating both sides of \eqref{main} against $m_i$ over velocity and space, and using the fact that the collision operator conserves mass, we obtain
	\[
	\frac{d}{dt} \iint_{\Omega \times \mathbb{R}^n} m_i f_i \,dx dv = 0,\quad i=1,2.
	\]
	This confirms that the individual mass of each species remains constant.

	\item \emph{Conservation of total momentum}: The mixture evolves under the constraint that total momentum is conserved. By multiplying \eqref{main} by $m_i v$ and integrating in $(x,v)$, we use the fact that the relaxation term preserves the total momentum because
	\[
\nu_i m_i  \int_{\mathbb{R}^n} v 	\left( \mathcal{M}_i[n_i, u] - f_i \right) dv =  \nu_i\rho_i (u - u_i).
	\]
	Since the definition of $u$ is constructed to ensure $\sum_{i=1}^2 \nu_i \rho_i u_i = \sum_{i=1}^2 \nu_i \rho_i u$, we have
	\[
	\frac{d}{dt} \sum_{i=1}^2 \iint_{\Omega \times \mathbb{R}^n} m_i v f_i \,dx dv = 0,
	\]
	which confirms the global momentum conservation in the absence of external forces.

	\item \emph{Entropy dissipation $(${\rm H}-theorem$)$}: Entropy dissipation is a fundamental feature of BGK-type models. We define the total entropy functional
	\[
	\mathcal{H}(f) := \iint_{\Omega \times \mathbb{R}^n} H(f,v)\,dxdv
	\]
	and compute its time derivative:
	\[
	\frac{d}{dt} \mathcal{H}(f) + \mathcal{D}(f) \leq 0,
	\]
	where the entropy dissipation rate is
	\[
	\mathcal{D}(f) := \sum_{i=1}^2 \nu_i  \iint_{\Omega \times \mathbb{R}^n} \left( h_i(f_i,v) - h_i(\mathcal{M}_i[n_i, u](v), v) \right)dxdv.
	\]
	The non-negativity $\mathcal{D}(f) \geq 0$ follows from the convexity of $h_i$ and the fact that $\mathcal{M}_i$ minimizes the entropy functional under mass and momentum constraints, see Appendix \ref{app_deri} and Remark \ref{rem: each h}.

	We now verify this inequality in two cases depending on the value of the adiabatic index $\gamma_i$.

\emph{Case $\gamma_i = \frac{n+2}{n}$}: This corresponds to a uniform truncation of the distribution function. Since 
	\[
	\int_{\R^n} h_i(f_i,v)\,dv = \int_{\R^n}\frac{m_i}{2}|v|^2 f_i + \infty \cdot \mathbf{1}_{f_i > c_i}\,dv, 
	\]
	the entropy is minimized precisely when $f_i = \mathcal{M}_i$, by construction (see Remark \ref{rem: each h}). Taking the time derivative of the total entropy then yields
\bq\label{end point}
	\frac{d}{dt} \sum_{i=1}^2 \iint_{\Omega \times \mathbb{R}^n} \frac{m_i}{2} |v|^2 f_i \,dx\,dv = \sum_{i=1}^2 \frac{\nu_i m_i}{2} \iint_{\Omega \times \mathbb{R}^n} |v|^2 (\mathcal{M}_i - f_i)\,dx\,dv \leq 0,
\eq
which confirms the entropy inequality.

\emph{Case $\gamma_i \in \left(1, \frac{n+2}{n} \right)$}: In this case, we use the power-law form of the kinetic entropy:
	\[
	h_i(f_i,v) = \frac{m_i}{2} |v|^2 f_i + \frac{1}{2c_i^{2/d_i}} \frac{f_i^{1+2/d_i}}{1+2/d_i}.
	\]
	Multiplying \eqref{main} by $f_i^{2/d_i}/(2c_i^{2/d_i})$, integrating over $(x,v)$, and summing over $i$ give
\[
\frac{d}{dt} \sum_{i=1}^2 \iint_{\Omega \times \mathbb{R}^n} \frac{1}{2c_i^{2/d_i}} \frac{f_i^{1+2/d_i}}{1+2/d_i} 
\,dxdv = \sum_{i=1}^2 \frac{\nu_i}{2c_i^{2/d_i}} \iint_{\Omega \times \mathbb{R}^n} \left( \mathcal{M}_i f_i^{2/d_i} - f_i^{1+2/d_i} \right) dx dv.
\]
Applying Young's inequality with exponents $p = \frac{d_i + 2}{d_i}$ and $q = \frac{d_i + 2}{2}$ yields
\begin{align*}
&\frac{d}{dt} \sum_{i=1}^2 \iint_{\Omega \times \R^n} \frac{1}{2c_i^{2/d_i}} \frac{f_i^{1+2/d_i}}{1+2/d_i}\,dxdv\cr
&\quad \le \sum_{i=1}^2 \frac{\nu_i}{2c_i^{2/d_i}} \iint_{\Omega \times \R^n} \left( \frac{d_i}{d_i+2} \mathcal{M}_i^{1+2/d_i} + \frac{2}{d_i+2} f_i^{1+2/d_i} - f_i^{1+2/d_i} \right) dx dv \\
&\quad = \sum_{i=1}^2 \frac{\nu_i}{2c_i^{2/d_i}} \iint_{\Omega \times \R^n} \left( \frac{\mathcal{M}_i^{1+2/d_i}}{1+2/d_i} - \frac{f_i^{1+2/d_i}}{1+2/d_i} \right) dx dv.
\end{align*}
Combining this with the entropy identity \eqref{end point}, we conclude that the total entropy is nonincreasing, as guaranteed by the entropy minimization principle.
\end{enumerate}
%%%%%%%%%%%%%%%%%%%%%%%%%%%%%%%%%%%%%%%%%%%%%%%%%%%%%%%%%%%%%%%%%%%%%%%%%%%%%%%%%%

\section{Formal derivation: Chapman--Enskog method}\label{sec_ce}
%%%%%%%%%%%%%%%%%%%%%%%%%%%%%%%%%%%%%%%%%%%%%%%%%%%%%%%%%%%%%%%%%%%%%%%%%%%%%%%%%%

In this section, we carry out a formal derivation of the macroscopic two-phase fluid model from the proposed kinetic BGK system \eqref{main}, using the Chapman--Enskog method. This asymptotic procedure is based on a hyperbolic scaling, which assumes that both temporal and spatial variations occur on a slow scale. Precisely, we apply the scalings $t \mapsto \varepsilon t$ and $x \mapsto \varepsilon x$, leading to the scaled system:
\begin{equation}\label{CE main}
\partial_t f_i + v\cdot \nabla_x f_i = \frac{\nu_i}{\varepsilon} \left( \mathcal{M}_i[n_i, u] - f_i \right), \qquad i=1,2,
\end{equation}
where $\varepsilon$ represents the Knudsen number.  In this setting, we assume $\varepsilon$ is small and proceed with a formal expansion of the distribution function $f_i$ in powers of $\varepsilon$:
\begin{equation}\label{decomp}
f_i = f_i^{(0)} + \varepsilon f_i^{(1)}.
\end{equation}
The expansion is performed under the assumption that the macroscopic quantities $n_i$ and $\rho_1 u_1 + \rho_2 u_2$ are not expanded in $\varepsilon$. This leads to the so-called \emph{compatibility conditions}:
\begin{align}\label{compatibility}
\int_{\mathbb{R}^n} f_i\,dv = \int_{\mathbb{R}^n} f_i^{(0)}\,dv, \ i=1,2 \quad \mbox{and} \quad \int_{\mathbb{R}^n} v \left( m_1 f_1 + m_2 f_2 \right) dv = \int_{\mathbb{R}^n} v \left( m_1 f_1^{(0)} + m_2 f_2^{(0)} \right) dv.
\end{align}
These relations guarantee that the macroscopic variables $n_i$, $\rho_i$, and $\rho_1u_1+\rho_2u_2$ are independent of $\varepsilon$.
Let us consider the expansions of collision frequenties, bulk velocity of species $i$, common bulk velocity, and equilibrium distributions: 	 
	$$
\nu_i=\nu_i^{(0)}+\ve \nu_i^{(1)},\quad u_i=u_i^{(0)}+\ve u_i^{(1)}\quad u=u^{(0)}+\ve u^{(1)},\quad 	\mathcal{M}_i=\mathcal{M}_i^{(0)}+\ve\mathcal{M}_i^{(1)}.
	$$
It is straightforward that
\begin{align*}
u&=\frac{\sum_{i=1}^2\nu_i \rho_iu_i}{\sum_{i=1}^2\nu_i \rho_i}\cr 
&=\frac{\sum_{i=1}^2\nu_i^{(0)} \rho_iu_i^{(0)}+\ve\left( \sum_{i=1}^2\nu_i^{(0)} \rho_iu_i^{(1)}+\sum_{i=1}^2\nu_i^{(1)} \rho_iu_i^{(0)}\right)+\ve^2\sum_{i=1}^2\nu_i^{(1)} \rho_iu_i^{(1)}}{\sum_{i=1}^2\nu_i^{(0)} \rho_i+\ve \sum_{i=1}^2\nu_i^{(1)} \rho_i},
\end{align*}
which implies
\begin{equation*}
u^{(0)}=u|_{\ve=0}=\frac{\sum_{i=1}^2\nu_i^{(0)} \rho_iu_i^{(0)}}{\sum_{i=1}^2\nu_i^{(0)} \rho_i}.
\end{equation*}
On the other hand, since $n_i$ is independent of $\ve$, $\mathcal{M}_i^{(0)}$ and $\mathcal{M}_i^{(1)}$ can be understood as
\begin{equation*}
\mathcal{M}_i^{(0)}=\left\{\mathcal{M}_i[n_i,u]\right\}_{\ve=0}=\mathcal{M}_i[n_i,u^{(0)}],
\end{equation*}
and
\begin{align}\label{M1}\begin{split}
\mathcal{M}_i^{(1)}&=\frac{d}{d\ve}\left\{\mathcal{M}_i[n_i,u]\right\}_{\ve=0}\cr 
&=\left\{\nabla_u \mathcal{M}_i[n_i,u]\cdot \frac{d u}{d\ve}\right\}_{\ve=0}\cr 
&=d_im_ic_i^{\frac{2}{d_i}}(v-u^{(0)})\mathcal{M}_i^{\frac{d_i-2}{d_i}}[n_i,u^{(0)}]\cr
&\hspace{2.5cm} \cdot \left\{ \frac{ \sum_{i=1}^2\nu_i^{(0)} \rho_iu_i^{(1)}+\sum_{i=1}^2\nu_i^{(1)} \rho_iu_i^{(0)}}{\sum_{i=1}^2\nu_i^{(0)} \rho_i}-\frac{ \left(\sum_{i=1}^2\nu_i^{(1)} \rho_i \right)\left(\sum_{i=1}^2\nu_i^{(0)} \rho_iu_i^{(0)}\right)}{\left(\sum_{i=1}^2\nu_i^{(0)} \rho_i\right)^2} \right\}.
\end{split}\end{align} 
%These relations guarantee that the macroscopic variables $n_i$ and $u$ are independent of $\varepsilon$, and consequently the equilibrium distribution $\mathcal{M}_i[n_i, u]$ is also independent of $\varepsilon$.
%%%%%%%%%%%%%%%%%%%%%%%%%%%%%%%%%%%%%%%%%%%%%%%%%%%%%%%%%%%%%%%%%%%%%%%%%%%%%%%%%%
\subsection{Zeroth-order expansion}

Substituting the expansion  into the equation \eqref{CE main}, we obtain
\begin{align}\label{e eqn}\begin{split}
&\partial_t \left( f_i^{(0)} + \varepsilon f_i^{(1)} \right) + v \cdot \nabla_x \left( f_i^{(0)} + \varepsilon f_i^{(1)} \right)\cr 
&\quad = \frac{\nu_i^{(0)}}{\varepsilon} \left( \mathcal{M}^{(0)}_i[n_i, u] - f_i^{(0)}  \right)+\left\{\nu_i^{(1)}\left(\mathcal{M}^{(0)}_i[n_i, u] - f_i^{(0)} \right)+\nu_i^{(0)} \left(\mathcal{M}^{(1)}_i[n_i, u]- f_i^{(1)}\right)\right\}\cr 
&\qquad +\ve \nu_i^{(1)}\left(\mathcal{M}^{(1)}_i[n_i, u]- f_i^{(1)}\right).
\end{split}\end{align}
Collecting terms by powers of $\varepsilon$, we find that the leading-order equation at order $O(\varepsilon^{-1})$ gives
\begin{equation}\label{e-1}
f_i^{(0)} = \mathcal{M}^{(0)}_i
\end{equation}
which states that the zeroth-order distribution is simply the truncated Maxwellian associated with $(n_i, u^{(0)})$. This, combined with \eqref{compatibility}, leads to
\begin{align*}
\rho_1u_1+\rho_2u_2&=\sum_{i=1}^2\intr m_ivf_i^{(0)}\,dv=\sum_{i=1}^2 \intr m_iv \mathcal{M}_i[n_i,u^{(0)}]\,dv=(\rho_1+\rho_2) u^{(0)},
\end{align*}
i.e. 
\begin{equation}\label{u^0}
u^{(0)}=\frac{\rho_1u_1+\rho_2u_2}{\rho}={\bf{u}}.
\end{equation} 
To derive macroscopic equations, we multiply \eqref{CE main} by $(1, m_i v)$ and integrate over $v \in \mathbb{R}^n$. This yields the balance laws
\begin{align}\label{balance}
\begin{split}
&\partial_t n_i + \nabla_x \cdot (n_i {\bf{u}}) = -\nabla_x \cdot \left\{ n_i (u_i - {\bf{u}}) \right\}, \qquad i = 1, 2, \\
&\partial_t \left( \rho {\bf{u}} \right) + \nabla_x \cdot \left( \rho {\bf{u}} \otimes {\bf{u}}\right) + \nabla_x \cdot P = 0,
\end{split}
\end{align}
where $P = P_1 + P_2$ is the total pressure tensor, defined by
\[
P_i = m_i \int_{\mathbb{R}^n} (v - {\bf{u}}) \otimes (v - {\bf{u}}) f_i\,dv.
\]
Using the expansion \eqref{decomp}, we now expand $P_i$ accordingly:
\begin{align*}
P_i &= m_i \int_{\mathbb{R}^n} (v - {\bf{u}}) \otimes (v - {\bf{u}}) \left( f_i^{(0)} + \varepsilon f_i^{(1)} \right) \,dv \\
&=: P_i^{(0)} + \varepsilon P_i^{(1)},
\end{align*}
where the zeroth- and first-order terms are respectively given by
\[
P_i^{(0)} = m_i \int_{\mathbb{R}^n} (v - {\bf{u}}) \otimes (v - {\bf{u}}) \mathcal{M}_i[n_i, {\bf{u}}]\,dv = m_i^{\frac{n}{2}(\gamma_i - 1)} n_i^{\gamma_i} \mathbb{I}_{n \times n},
\]
\begin{align*}
P_i^{(1)} &= m_i \int_{\mathbb{R}^n} (v - {\bf{u}}) \otimes (v - {\bf{u}}) f_i^{(1)}\,dv.
\end{align*}
Here we have used the identity derived in Lemma \ref{lem_ident}, and $\mathbb{I}_{n \times n}$ denotes the identity matrix in $\mathbb{R}^{n,n}$.

Similarly, the velocity $u_i$ for species $i$ is expanded as
\begin{align*}
u_i &= \frac{1}{\int_{\mathbb{R}^n} f_i^{(0)}\,dv} \int_{\mathbb{R}^n} v \left( f_i^{(0)} + \varepsilon f_i^{(1)} \right)\,dv   =: u_i^{(0)} + \varepsilon u_i^{(1)},
\end{align*}
with
\begin{equation}\label{u_01}
u_i^{(0)} = \frac{1}{n_i} \int_{\mathbb{R}^n} v \, \mathcal{M}_i[n_i, {\bf{u}}]\,dv = {\bf{u}}, \qquad u_i^{(1)} = \frac{1}{n_i} \int_{\mathbb{R}^n} v f_i^{(1)}\,dv,
\end{equation}
where we again used  the fact that $n_i$ is independent of $\varepsilon$ due to \eqref{compatibility}, and $f_i^{(0)} = \mathcal{M}_i[n_i, {\bf{u}}]$ from \eqref{e-1} and \eqref{u^0}.

Combining the above expressions, the macroscopic system obtained at $O(1)$ is the compressible two-phase isentropic Euler system:
\begin{align}\label{Euler system}
\begin{split}
	&\partial_t n_i + \nabla_x \cdot (n_i {\bf{u}}) = 0, \qquad i=1,2, \\
	&\partial_t \left( \rho {\bf{u}} \right) + \nabla_x \cdot \left( \rho {\bf{u}} \otimes {\bf{u}} \right) + \nabla_x \left( m_1^{\frac{n}{2}(\gamma_1 - 1)} n_1^{\gamma_1} + m_2^{\frac{n}{2}(\gamma_2 - 1)} n_2^{\gamma_2} \right) = 0.
\end{split}
\end{align}

%%%%%%%%%%%%%%%%%%%%%%%%%%%%%%%%%%%%%%%%%%%%%%%%%%%%%%%%%%%%%%%%%%%%%%%%%%%%%%%%%%

\subsection{First-order expansion: Computation of $f_i^{(1)}$}

We now derive the first-order correction $f_i^{(1)}$ in the Chapman--Enskog expansion. Starting from the kinetic equation \eqref{e eqn}, the term of order $O(1)$ yields the expression
\begin{align}\label{order 0}
f_i^{(1)} =\mathcal{M}_i^{(1)}+ \frac{1}{\nu_i^{(0)}}\left(-\pa_t \mathcal{M}_i^{(0)} - v \cdot \nabla_x \mathcal{M}_i^{(0)}\right).
\end{align}
To compute $f_i^{(1)}$, we begin by evaluating the derivatives of the truncated Maxwellian $\mathcal{M}_i^{(0)} = \mathcal{M}_i[n_i, {\bf{u}}]$. By direct differentiation, we obtain the following formulas:
\[
\pa_{n_i} \mathcal{M}_i^{(0)} = c_i^{\frac{2}{d_i}} d_i m_i^{\frac{n}{2}(\gamma_i - 1)} \gamma_i n_i^{\gamma_i - 2} \mathcal{M}_i^{\frac{d_i - 2}{d_i}}[n_i,{\bf{u}}] \quad \mbox{and} \quad
\nabla_u \mathcal{M}_i^{(0)} = c_i^{\frac{2}{d_i}} m_i d_i (v - {\bf{u}}) \mathcal{M}_i^{\frac{d_i - 2}{d_i}}[n_i,{\bf{u}}].
\]
Substituting the above relations with \eqref{M1} into \eqref{order 0}, we find that $f_i^{(1)}$ can be expressed as
\begin{align}\label{f1}\begin{split}
f_i^{(1)} 
&= d_im_ic_i^{\frac{2}{d_i}}(v-{\bf{u}})\cdot \left( \frac{ \sum_{i=1}^2\nu_i^{(0)} \rho_iu_i^{(1)}}{\sum_{i=1}^2\nu_i^{(0)} \rho_i} \right)\mathcal{M}_i^{\frac{d_i-2}{d_i}}[n_i,{\bf{u}}]\cr 
&-\frac{1}{\nu_i^{(0)}}c_i^{\frac{2}{d_i}}\left\{ m_i^{\frac{n}{2}(\gamma_i - 1)} d_i \gamma_i n_i^{\gamma_i - 2}  (\pa_t n_i + v \cdot \nabla_x n_i) 
+  m_i d_i (v - {\bf{u}})  \cdot (\pa_t {\bf{u}} + v \cdot \nabla_x {\bf{u}})\right\}\mathcal{M}_i^{\frac{d_i - 2}{d_i}}[n_i,{\bf{u}}].
\end{split}\end{align}
To proceed, we collect several useful moment identities related to the truncated Maxwellian. The proofs of these identities are postponed to Appendix \ref{app_vm} for readability.
\begin{lemma}\label{lem_vm} Let $\mathcal{M}_i[n_i, u]$ be the truncated Maxwellian defined by
\[
\mathcal{M}_i[n_i,u](v) = c_i \left( m_i^{\frac{n}{2}(\gamma_i-1)} \frac{2\gamma_i}{\gamma_i-1} n_i^{\gamma_i - 1} - m_i |v-u|^2 \right)_+^{\frac{d_i}{2}},
\]
where $c_i$ is a normalization constant. Then the following identities hold:
\begin{enumerate}
\item[(i)] Zeroth-order moment:
\[
c_i^{\frac{2}{d_i}}\int_{\R^n} \mathcal{M}_i^{\frac{d_i - 2}{d_i}}[n_i, 0]  \,dv = \frac{1}{d_i \gamma_i m_i^{\frac{n}{2}(\gamma_i - 1)}} n_i^{2 - \gamma_i}.
\]
\item[(ii)] Second-order moment:
\[
c_i^{\frac{2}{d_i}}\int_{\R^n} (v  \otimes v) \mathcal{M}_i^{\frac{d_i - 2}{d_i}}[n_i, 0] \,dv = \frac{n_i}{d_i m_i} \mathbb{I}_{n \times n}.
\]
\item[(iii)] Graident-weighted fourth-order moment:
\begin{align*}
&c_i^{\frac{2}{d_i}} \sum_{p,q=1}^n \pa_{x_p} u_q \int_{\R^n} v_p  v_q  (v \otimes v) \mathcal{M}_i^{\frac{d_i - 2}{d_i}}[n_i, 0]  \, dv \cr
&\quad = \frac{n}{n+2}\frac{\gamma_i}{d_i m_i^{2 - \frac n2(\gamma_i-1)}}n_i^{\gamma_i}\left\{(\nabla_x\cdot u) \mathbb{I}_{n\times n} +\nabla_x u+(\nabla_xu)^t \right\}.
\end{align*}
\end{enumerate}
\end{lemma}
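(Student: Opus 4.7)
\textbf{Proof proposal for Lemma \ref{lem_vm}.}
The key observation is that $\mathcal{M}_i[n_i,0]$ is radial and compactly supported in the ball of radius $R_i:=\sqrt{A_i/m_i}$, where $A_i:=m_i^{\frac{n}{2}(\gamma_i-1)}\frac{2\gamma_i}{\gamma_i-1}n_i^{\gamma_i-1}$. Since $\mathcal{M}_i[n_i,0](v)=c_i(A_i-m_i|v|^2)_+^{d_i/2}$, raising to the power $(d_i-2)/d_i$ and multiplying by the prefactor $c_i^{2/d_i}$ yields
$$c_i^{2/d_i}\,\mathcal{M}_i^{\frac{d_i-2}{d_i}}[n_i,0](v)=c_i\bigl(A_i-m_i|v|^2\bigr)_+^{\frac{d_i-2}{2}}.$$
The plan is to perform the change of variables $v=R_i w$ to reduce every integral in (i)--(iii) to a polynomial-weighted integral over the unit ball $B_1\subset\R^n$ against $(1-|w|^2)^{(d_i-2)/2}$, and then evaluate these using standard Beta/Gamma identities.

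For identity (i), the change of variables produces a factor $R_i^n \cdot A_i^{(d_i-2)/2}$ times the classical Dirichlet-type integral
$$\int_{B_1}(1-|w|^2)^{(d_i-2)/2}\,dw=\frac{\pi^{n/2}\,\Gamma(d_i/2)}{\Gamma(d_i/2+n/2+1)}\cdot\frac{d_i}{2},$$
or more directly $\pi^{n/2}\Gamma(d_i/2)/\Gamma((d_i+n)/2)$ after using $\Gamma(z+1)=z\Gamma(z)$. Using the identity $\frac{d_i}{2}+\frac{n}{2}=\frac{1}{\gamma_i-1}$, the Gamma factor combines with the explicit formula for $c_i$ to leave only powers of $m_i$, $n_i$, $\gamma_i$, and $d_i$; careful collection of these powers (in particular noting $A_i^{(n+d_i-2)/2}=A_i^{(2-\gamma_i)/(\gamma_i-1)}$) yields the stated $\frac{1}{d_i\gamma_i m_i^{n(\gamma_i-1)/2}}n_i^{2-\gamma_i}$.

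For identity (ii), radial symmetry gives $\int_{B_1}w_p w_q(1-|w|^2)^{(d_i-2)/2}\,dw=\frac{\delta_{pq}}{n}\int_{B_1}|w|^2(1-|w|^2)^{(d_i-2)/2}\,dw$, and the remaining radial integral is a Beta function. The same bookkeeping as in (i), now with an extra factor $R_i^2=A_i/m_i$ and a shift by one in the Gamma arguments, produces $\frac{n_i}{d_i m_i}\mathbb{I}_{n\times n}$. For identity (iii), the fully symmetric fourth-order identity
$$\int_{B_1} w_pw_qw_rw_s(1-|w|^2)^{(d_i-2)/2}\,dw=\frac{\delta_{pq}\delta_{rs}+\delta_{pr}\delta_{qs}+\delta_{ps}\delta_{qr}}{n(n+2)}\int_{B_1}|w|^4(1-|w|^2)^{(d_i-2)/2}\,dw$$
reduces the computation to a single radial Beta integral. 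Contracting with $\partial_{x_p}u_q$ collapses the three-term sum into exactly $(\nabla_x\cdot u)\,\mathbb{I}_{n\times n}+\nabla_x u+(\nabla_x u)^t$, and the scalar prefactor assembles to $\frac{n}{n+2}\frac{\gamma_i}{d_i m_i^{2-\frac{n}{2}(\gamma_i-1)}}n_i^{\gamma_i}$ after substituting $c_i$ and using $\frac{d_i}{2}+\frac{n}{2}=\frac{1}{\gamma_i-1}$ once more.

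The only real obstacle is arithmetic: the interplay among the constants $c_i$, $d_i=\frac{2}{\gamma_i-1}-n$, the Jacobian power $R_i^{n+2k}=A_i^{(n+2k)/2}m_i^{-(n+2k)/2}$, and the Gamma ratios must be tracked carefully so that the $\Gamma\bigl(\frac{\gamma_i}{\gamma_i-1}\bigr)$ factor in $c_i$ cancels against the Beta-function factors to leave clean monomials in $n_i$ and $m_i$. No new analytical ideas are needed beyond radial symmetry and the identity $\int_0^1 r^{a-1}(1-r)^{b-1}\,dr=B(a,b)=\Gamma(a)\Gamma(b)/\Gamma(a+b)$; the full verification is therefore a direct, albeit tedious, calculation and is deferred to Appendix \ref{app_vm}.
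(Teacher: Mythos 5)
Your proposal is correct and follows the same overall strategy the paper uses in Appendix~B: reduce $c_i^{2/d_i}\mathcal{M}_i^{(d_i-2)/d_i}[n_i,0]$ to $c_i(A_i - m_i|v|^2)_+^{(d_i-2)/2}$, rescale $v=R_iw$ to the unit ball, apply radial symmetry plus Beta/Gamma identities, and assemble the constants. The one genuine difference is in part~(iii): the paper (via its Lemma~\ref{lem_vm0}) computes the sphere averages of $\omega_a^4$ and $\omega_a^2\omega_b^2$ separately and then builds the matrix $(C_{ab})$ entry by entry, treating the diagonal and off-diagonal cases independently, whereas you invoke the fully symmetric fourth-order tensor identity
\[
\int_{B_1} w_pw_qw_rw_s(1-|w|^2)^{\alpha}\,dw=\frac{\delta_{pq}\delta_{rs}+\delta_{pr}\delta_{qs}+\delta_{ps}\delta_{qr}}{n(n+2)}\int_{B_1}|w|^4(1-|w|^2)^{\alpha}\,dw,
\]
so the contraction with $\partial_{x_p}u_q$ produces $(\nabla_x\cdot u)\mathbb{I}_{n\times n}+\nabla_xu+(\nabla_xu)^t$ in a single step. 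This is a legitimate and slightly cleaner packaging of the same computation; the paper's entrywise approach is equivalent but more verbose. One small slip: your first displayed value of $\int_{B_1}(1-|w|^2)^{(d_i-2)/2}\,dw$ carries the wrong factor (it should be multiplied by $(d_i+n)/2$, not $d_i/2$, to undo the extra $\Gamma$-shift, since $\Gamma(d_i/2+n/2+1)=\frac{d_i+n}{2}\Gamma((d_i+n)/2)$), but your ``more directly'' form $\pi^{n/2}\Gamma(d_i/2)/\Gamma((d_i+n)/2)$ is the correct one and is what the rest of the argument actually relies on.
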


%%%%%%%%%%%%%%%%%%%%%%%%%%%%%%%%%%%%%%%%%%%%%%%%%%%%%%%%%%%%%%%%%%%%%%%%%%%%%%%%%%

\subsubsection*{First-order correction to velocity: $u_i^{(1)}$}
We now derive the first-order correction $u_i^{(1)}$ to the macroscopic velocity field in the Chapman--Enskog expansion. From the macroscopic moment relation \eqref{u_01}, the correction is given by
\[
u_i^{(1)} = \frac{1}{n_i} \int_{\R^n} v f_i^{(1)}\, dv.
\]
Substituting the expression for $f_i^{(1)}$ from \eqref{f1}, we find
\begin{align*}
u_i^{(1)} &=\frac{1}{n_i} \int_{\R^n} v \left\{ d_im_ic_i^{\frac{2}{d_i}}(v-{\bf{u}})\cdot \left( \frac{ \sum_{i=1}^2\nu_i^{(0)} \rho_iu_i^{(1)}}{\sum_{i=1}^2\nu_i^{(0)} \rho_i} \right) \right\}\mathcal{M}_i^{\frac{d_i-2}{d_i}}[n_i,{\bf{u}}]\, dv \cr 
&\quad  -\frac{c_i^{\frac{2}{d_i}}}{n_i\nu_i^{(0)}} \int_{\R^n} v \left\{ 
 m_i^{\frac{n}{2}(\gamma_i - 1)} d_i \gamma_i n_i^{\gamma_i - 2}  (\pa_t n_i + v \cdot \nabla_x n_i) +  m_i d_i (v - {\bf{u}})  \cdot (\pa_t {\bf{u}} + v \cdot \nabla_x {\bf{u}})
\right\}\mathcal{M}_i^{\frac{d_i - 2}{d_i}}[n_i,{\bf{u}}] \,dv \\
&=:I_1+ \frac{1}{\nu_i^{(0)}}\sum_{k=2}^5 I_k.
\end{align*}
By using Lemma \ref{lem_vm}, each term  $I_k$ is computed as follows.
\begin{align*}
I_1&= \left( c_i^{\frac{2}{d_i}}\int_{\R^n} v \otimes  v\mathcal{M}_i^{\frac{d_i-2}{d_i}}[n_i,0]\, dv \right) \frac{d_im_i}{n_i}\left( \frac{ \sum_{i=1}^2\nu_i^{(0)} \rho_iu_i^{(1)}}{\sum_{i=1}^2\nu_i^{(0)} \rho_i} \right)=  \frac{ \sum_{i=1}^2\nu_i^{(0)} \rho_iu_i^{(1)}}{\sum_{i=1}^2\nu_i^{(0)} \rho_i},
\end{align*}
\begin{align*}
I_2&=-   \frac{\pa_tn_i}{n_i}    {\bf{u}},
\end{align*}
\begin{align*}
I_3&=-\frac{1}{n_i}d_i m_i^{\frac n2(\gamma_i - 1)} \gamma_in_i^{\gamma_i-2} \left(c_i^{\frac{2}{d_i}} \intr (v\otimes v)  \mathcal{M}_i^{\frac{d_i-2}{d_i}}[n_i, {\bf{u}}]  \,dv\right) \nabla_x n_i\cr 
&=-\frac{1}{n_i}d_i m_i^{\frac n2(\gamma_i - 1)} \gamma_in_i^{\gamma_i-2} \left(c_i^{\frac{2}{d_i}}\int_{\R^n} (v - {\bf{u}})\otimes (v - {\bf{u}})\mathcal{M}_i^{\frac{d_i - 2}{d_i}}[n_i, {\bf{u}}] \,dv \right.\cr
&\hspace{7cm} \left. + ( {\bf{u}}\otimes {\bf{u}} )c_i^{\frac{2}{d_i}}\int_{\R^n} \mathcal{M}_i^{\frac{d_i - 2}{d_i}}[n_i, {\bf{u}}] \,dv\right) \nabla_x n_i\cr 
&=-\frac{1}{n_i}n_i^{\gamma_i-2}  \left(\gamma_i m_i^{\frac n2(\gamma_i - 1)-1} n_i \mathbb{I}_{n\times n}+ n_i^{2-\gamma_i } ({\bf{u}}\otimes {\bf{u}}) \right)\nabla_x n_i\cr 
&=-\gamma_i m_i^{\frac n2(\gamma_i - 1)-1} n_i^{\gamma_i-2}  \nabla_x n_i-\frac{1}{n_i}{\bf{u}}  ({\bf{u}} \cdot \nabla_x) n_i,
\end{align*}
\begin{align*}
I_4&=-\frac{m_i}{n_i}d_i\pa_t {\bf{u}}\cdot \intr v\otimes (v-{\bf{u}})  c_i^{\frac{2}{d_i}}\mathcal{M}_i^{\frac{d_i-2}{d_i}}[n_i, {\bf{u}}]    \,dv=-\pa_t {\bf{u}},
\end{align*}
and
\begin{align*}
I_5&=-\frac{m_i}{n_i}d_i \intr v   (v-{\bf{u}}) \cdot  \left\{ ( v \cdot \nabla_x) {\bf{u}}  \right\} c_i^{\frac{2}{d_i}}\mathcal{M}_i^{\frac{d_i-2}{d_i}}[n_i, {\bf{u}}]  \, dv\cr 
&=-\frac{m_i}{n_i}d_i  \intr (\{ {\bf{u}}\cdot\nabla_x) {\bf{u}}\} \cdot v)  v   c_i^{\frac{2}{d_i}} \mathcal{M}_i^{\frac{d_i-2}{d_i}}[n_i, 0]  \,dv\cr 
&\quad -\frac{m_i}{n_i}d_i c_i^{\frac 2{d_i}} {\bf{u}} \intr v \cdot \{ (v \cdot \nabla_x) {\bf{u}} \}  \mathcal{M}_i^{\frac{d_i-2}{d_i}}[n_i, 0]  \,dv \cr
&=- ( {\bf{u}}\cdot\nabla_x) {\bf{u}}   -\frac{m_i}{n_i}d_i {\bf{u}}   \frac {(\nabla_x \cdot {\bf{u}})}n\intr |v|^2    c_i^{\frac{2}{d_i}}  \mathcal{M}_i^{\frac{d_i-2}{d_i}}[n_i, 0]  \,dv\cr 
&=- ( {\bf{u}}\cdot\nabla_x) {\bf{u}} - (\nabla_x\cdot {\bf{u}}) {\bf{u}}\cr 
&=-\nabla_x \cdot ({\bf{u}}\otimes {\bf{u}})
\end{align*}
where we used
$$
 \intr (v_\ell)^2    c_i^{\frac{2}{d_i}}  \mathcal{M}_i^{\frac{d_i-2}{d_i}}[n_i, 0]  \,dv =  \frac 1n\intr |v|^2    c_i^{\frac{2}{d_i}}  \mathcal{M}_i^{\frac{d_i-2}{d_i}}[n_i, 0]  \,dv =\frac{n_i}{d_i m_i}
$$
for any $\ell = 1,\dots, n$. 
Summing all five contributions, we get
\begin{align}\label{u_110}
\begin{aligned}
u_i^{(1)}&=\frac{ \sum_{i=1}^2\nu_i^{(0)} \rho_iu_i^{(1)}}{\sum_{i=1}^2\nu_i^{(0)} \rho_i} \cr
&\quad -\frac{1}{n_i\nu_{i}^{(0)}} \left\{     \pa_tn_i  {\bf{u}}+\gamma_im_i^{\frac n2(\gamma_i - 1)-1} n_i^{\gamma_i-1}  \nabla_x n_i + {\bf{u}}  ({\bf{u}} \cdot \nabla_x) n_i +n_i\pa_t {\bf{u}}+n_i\nabla_x\cdot ({\bf{u}}\otimes {\bf{u}}) \right\}\cr 
&=\frac{ \sum_{i=1}^2\nu_i^{(0)} \rho_iu_i^{(1)}}{\sum_{i=1}^2\nu_i^{(0)} \rho_i}-\frac{1}{n_i\nu_{i}^{(0)}}\left\{     \pa_t(n_i {\bf{u}}) +\nabla_x\cdot (n_i {\bf{u}}\otimes {\bf{u}}) + m_i^{\frac n2(\gamma_i - 1)-1}  \nabla_x (n_i^{\gamma_i}) \right\}.
\end{aligned}
\end{align}
From the second relation of \eqref{compatibility} and \eqref{u_01}, we deduce that 
$$
\rho_1u_1^{(1)}+\rho_2u_2^{(1)}=0
$$
which together with \eqref{u_110} gives
the final expression for the first-order velocity correction:
\begin{align}\label{u_11}
\begin{aligned}
u_i^{(1)}&=-\frac{\sum_{k=1}^2\nu_k^{(0)}\rho_k}{\nu_{i}^{(0)}\nu_{j}^{(0)}\rho n_i}\left\{     \pa_t(n_i {\bf{u}}) +\nabla_x\cdot (n_i {\bf{u}}\otimes {\bf{u}}) + m_i^{\frac n2(\gamma_i - 1)-1}  \nabla_x (n_i^{\gamma_i}) \right\},
\end{aligned}
\end{align}
where $i\neq j$.

%%%%%%%%%%%%%%%%%%%%%%%%%%%%%%%%%%%%%%%%%%%%%%%%%%%%%%%%%%%%%%%%%%%%%%%%%%%%%%%%%%

\subsubsection*{First-order correction to pressure tensor: $P_i^{(1)}$}
We next compute the first-order correction to the pressure tensor $P_i^{(1)}$. This term involves higher-order velocity moments of the kinetic perturbation $f_i^{(1)}$, and plays a crucial role in determining the viscous structure of the resulting macroscopic system. By substituting the expression for $f_i^{(1)}$ into the definition of the pressure tensor, we obtain
\begin{align*}
P_i^{(1)}&=d_im_i^2c_i^{\frac{2}{d_i}}\int_{\mathbb{R}^n} (v - {\bf{u}}) \otimes (v - {\bf{u}}) \left\{ (v-{\bf{u}})\cdot \left( \frac{ \sum_{i=1}^2\nu_i^{(0)} \rho_iu_i^{(1)}}{\sum_{i=1}^2\nu_i^{(0)} \rho_i} \right)\right\}\mathcal{M}_i^{\frac{d_i-2}{d_i}}[n_i,{\bf{u}}]
\,dv\cr 
&\quad -\frac{d_i m_i^{\frac n2(\gamma_i - 1) + 1} \gamma_in_i^{\gamma_i-2}c_i^{\frac{2}{d_i}}}{\nu_i^{(0)}}\intr (v-{\bf{u}})\otimes (v-{\bf{u}})   \mathcal{M}_i^{\frac{d_i-2}{d_i}}[n_i, {\bf{u}}]\left( \pa_tn_i+v\cdot \nabla_x n_i \right)  dv \cr 
&\quad -\frac{d_i m_i^2 c_i^{\frac{2}{d_i}}}{\nu_i^{(0)}}\intr (v-{\bf{u}})\otimes (v-{\bf{u}})      \mathcal{M}_i^{\frac{d_i-2}{d_i}}[n_i, {\bf{u}}] \, \{(v-{\bf{u}})\cdot \left(\pa_t {\bf{u}} + (v \cdot \nabla_x) {\bf{u}}\right)\}   \,dv.
\end{align*}
We compute each of the three terms separately. The first term vanishes due to the oddness: 
\begin{align*}
&c_i^{\frac{2}{d_i}}\int_{\mathbb{R}^n} (v - {\bf{u}}) \otimes (v - {\bf{u}}) \left\{ (v-{\bf{u}})\cdot \left( \frac{ \sum_{i=1}^2\nu_i^{(0)} \rho_iu_i^{(1)}}{\sum_{i=1}^2\nu_i^{(0)} \rho_i} \right)\right\}\mathcal{M}_i^{\frac{d_i-2}{d_i}}[n_i,{\bf{u}}]
\,dv\cr 
&\quad =c_i^{\frac{2}{d_i}}\int_{\mathbb{R}^n} v \otimes v \left\{ v\cdot \left( \frac{ \sum_{i=1}^2\nu_i^{(0)} \rho_iu_i^{(1)}}{\sum_{i=1}^2\nu_i^{(0)} \rho_i} \right)\right\}\mathcal{M}_i^{\frac{d_i-2}{d_i}}[n_i,0]
\,dv\cr 
&\quad =0.
\end{align*}
The second integral can be simplified using the change of variables and the symmetry of the Maxwellian:
\begin{align*}
&c_i^{\frac 2{d_i}} \intr (v-{\bf{u}})\otimes (v-{\bf{u}})   \mathcal{M}_i^{\frac{d_i-2}{d_i}}[n_i, {\bf{u}}]   (\pa_tn_i+v\cdot\nabla_xn_i)\,dv\cr 
&\quad =c_i^{\frac 2{d_i}} \intr (v\otimes v) \mathcal{M}_i^{\frac{d_i-2}{d_i}}[n_i, 0]  \left\{\pa_tn_i+ (v+{\bf{u}})\cdot\nabla_xn_i\right\}\,dv\cr 
&\quad =(\pa_tn_i+{\bf{u}}\cdot\nabla_xn_i) \lt(\frac{c_i^{\frac 2{d_i}}}{n}\intr |v|^2   \mathcal{M}_i^{\frac{d_i-2}{d_i}}[n_i, 0]  \,dv \rt)   \mathbb{I}_{n\times n}\cr 
&\quad =\frac{n_i}{d_i m_i}(\pa_tn_i+{\bf{u}}\cdot\nabla_x n_i) \mathbb{I}_{n\times n}.
\end{align*}
For the third term, we use the moment identity in Lemma \ref{lem_vm} (iii):
\begin{align*}
&c_i^{\frac 2{d_i}}\intr (v-{\bf{u}})\otimes (v-{\bf{u}})\mathcal{M}_i^{\frac{d_i-2}{d_i}}[n_i, {\bf{u}}]    \lt\{ (v-{\bf{u}})\cdot \left(\pa_t {\bf{u}}+ (v \cdot \nabla_x) {\bf{u}}\right)\rt\}\,dv\cr 
&\quad =c_i^{\frac 2{d_i}}\intr v\otimes v \mathcal{M}_i^{\frac{d_i-2}{d_i}}[n_i, 0]  \lt\{ v \cdot \left(\pa_t {\bf{u}}+ ( (v+{\bf{u}}) \cdot \nabla_x) {\bf{u}}\right)\rt\}\,dv\cr 
&\quad =c_i^{\frac 2{d_i}}\intr v\otimes v \mathcal{M}_i^{\frac{d_i-2}{d_i}}[n_i, 0]  \lt\{ v \cdot \left(  ( v \cdot \nabla_x) {\bf{u}}\right)\rt\}\,dv\cr 
&\quad = c_i^{\frac{2}{d_i}} \sum_{p,q=1}^n \pa_{x_p} {\bf{u}}_q \int_{\R^n} v_p  v_q  (v \otimes v) \mathcal{M}_i^{\frac{d_i - 2}{d_i}}[n_i, 0]  \, dv \cr
&\quad =\frac{n}{n+2}\frac{\gamma_i}{d_i m_i^{2 - \frac n2(\gamma_i-1)}}n_i^{\gamma_i}\left\{(\nabla_x\cdot {\bf{u}}) \mathbb{I}_{n\times n} +\nabla_x {\bf{u}}+(\nabla_x{\bf{u}})^t \right\}.
\end{align*}
Combining the three terms, we obtain the explicit expression for $P_i^{(1)}$ as follows.
\begin{align*}
P_i^{(1)}&=- \frac{m_i^{\frac n2(\gamma_i - 1)}  }{\nu_i^{(0)}} \gamma_i(\pa_tn_i+{\bf{u}}\cdot\nabla_xn_i)n_i^{\gamma_i-1} \mathbb{I}_{n\times n} -\frac{m_i^{\frac n2(\gamma_i - 1)}}{\nu_i^{(0)}}\frac{n}{n+2} \gamma_i\left\{(\nabla_x\cdot {\bf{u}}) \mathbb{I}_{n\times n} +\nabla_x {\bf{u}}+(\nabla_x{\bf{u}})^t \right\}n_i^{\gamma_i}.  
\end{align*}
Using the continuity equation and the identities \eqref{u_01} and \eqref{u_11}, we can rewrite this expression as
\begin{align*}
P_i^{(1)}&=- \frac{m_i^{\frac n2(\gamma_i - 1)}  }{\nu_i^{(0)}}  \frac{n}{n+2}\gamma_i\left\{ (\nabla_x\cdot {\bf{u}}) \mathbb{I}_{n\times n} +\nabla_x {\bf{u}}+(\nabla_x{\bf{u}})^t\right\}n_i^{\gamma_i} \cr
&\quad +  \frac{m_i^{\frac n2(\gamma_i - 1)}  }{\nu_i^{(0)}} \gamma_i \left\{ n_i\nabla_x\cdot {\bf{u}} +\nabla_x\cdot (n_i({\bf{u}}_i-{\bf{u}}))\right\}n_i^{\gamma_i-1}\mathbb{I}_{n\times n}\cr 
&=-\frac{m_i^{\frac n2(\gamma_i - 1)}  }{\nu_i^{(0)}}  \frac{n}{n+2}\gamma_i\left\{ (\nabla_x\cdot {\bf{u}}) \mathbb{I}_{n\times n} +\nabla_x {\bf{u}}+(\nabla_x{\bf{u}})^t\right\}n_i^{\gamma_i} \cr 
&\quad +\frac{m_i^{\frac n2(\gamma_i - 1)}  }{\nu_i^{(0)}}  \gamma_i \Bigg[ n_i\nabla_x\cdot {\bf{u}} -\varepsilon\nabla_x\cdot \frac{\sum_{k=1}^2\nu_k^{(0)}\rho_k}{\nu_{i}^{(0)}\nu_{j}^{(0)}\rho } \cr
&\hspace{5.5cm} \times \left\{     \pa_t(n_i {\bf{u}}) +\nabla_x\cdot (n_i {\bf{u}}\otimes {\bf{u}}) + m_i^{\frac n2(\gamma_i - 1)-1}\nabla_x (n_i^{\gamma_i}) \right\}\Bigg]n_i^{\gamma_i-1} \mathbb{I}_{n\times n} .
\end{align*} 
Thus, the system up to $O(\varepsilon)$, \eqref{balance} together with \eqref{u_01} and \eqref{u_11} is given by
\begin{align*}
&\pa_t n_i+\nabla_x\cdot (n_i {\bf{u}})=\varepsilon\nabla_x \cdot  \left\{\frac{\sum_{k=1}^2\nu_k^{(0)}\rho_k}{\nu_{i}^{(0)}\nu_{j}^{(0)}\rho }\left(    \pa_t(n_i {\bf{u}}) +\nabla_x\cdot (n_i {\bf{u}}\otimes {\bf{u}}) + m_i^{\frac n2(\gamma_i - 1)-1}  \nabla_x (n_i^{\gamma_i}) \right) \right\}
\end{align*}
for $i=1,2$, and
\begin{align*}
&\pa_t\{\rho {\bf{u}}\}+ \nabla_x\cdot \left\{\rho  {\bf{u}}\otimes {\bf{u}} \right\}+\nabla_x \left( m_1^{\frac{n}{2}(\gamma_1-1)}n_1^{\gamma_1}+m_2^{\frac{n}{2}(\gamma_2-1)}n_2^{\gamma_2} \right)\cr 
&\quad -\frac{n}{n+2}\nabla_x \cdot \lt[ \left\{ (\nabla_x\cdot {\bf{u}}) \mathbb{I}_{n\times n} +\nabla_x {\bf{u}}+(\nabla_x{\bf{u}})^t\right\}\left(\e_1\gamma_1\frac{m_1^{\frac n2(\gamma_1 - 1)}  }{\nu_1^{(0)}} n_1^{\gamma_1}+\e_2\gamma_2\frac{m_2^{\frac n2(\gamma_2 - 1)}  }{\nu_2^{(0)}} n_2^{\gamma_2}\right)\rt]\cr
&\qquad =-\nabla_x \left\{\left( \e_1\gamma_1 \frac{m_1^{\frac n2(\gamma_1 - 1)}  }{\nu_1^{(0)}} n_1^{\gamma_1}+\e_2 \gamma_2 \frac{m_2^{\frac n2(\gamma_2 - 1)}  }{\nu_2^{(0)}} n_2^{\gamma_2} \right) \nabla_x\cdot {\bf{u}}  \right\}. 
\end{align*}
To express this in a standard viscous form, we define the Lam\'e viscosity coefficients
\[
\mu_i(n_i) = \frac{n }{n+2}\frac{\gamma_i}{\nu_i^{(0)}}  n_i^{\gamma_i} \quad \mbox{and} \quad \lambda_i(n_i) = -\frac{2 }{n+2}\frac{\gamma_i}{\nu_i^{(0)}} n_i^{\gamma_i}, \quad i =1,2,
\]
and the symmetric strain tensor
\[
\T(\nabla_x {\bf{u}}) := \frac12 (\nabla_x {\bf{u}} + (\nabla_x {\bf{u}})^t).
\]
The corresponding stress tensor is given by
\[
\mathbb{S}_i(\nabla_x {\bf{u}}):= 2\mu_i(n_i) \T(\nabla_x {\bf{u}}) + \lambda_i (n_i) (\nabla_x \cdot {\bf{u}}) \mathbb{I}_{n\times n}.
\]
Note that $\mu_i \geq 0$ and $2\mu_i + n \lambda_i = 0$ for $i=1,2$, ensuring consistency with the Stokes hypothesis. The momentum equation now takes the form
\begin{align*}
&\pa_t\{\rho {\bf{u}}\}+ \nabla_x\cdot \left\{\rho {\bf{u}}\otimes {\bf{u}} \right\}+\nabla_x \left( m_1^{\frac{n}{2}(\gamma_1-1)}n_1^{\gamma_1}+m_2^{\frac{n}{2}(\gamma_2-1)}n_2^{\gamma_2} \right)\cr 
&\quad - \nabla_x \cdot \lt(\e_1 m_1^{\frac n2(\gamma_1 - 1)}\mathbb{S}_1(\nabla_x {\bf{u}}) + \e_2  m_2^{\frac n2(\gamma_2 - 1)}  \mathbb{S}_2(\nabla_x {\bf{u}})\rt) = 0.
\end{align*}
In conclusion, we arrive at
\begin{align*}
&\pa_t n_i+\nabla_x\cdot (n_i {\bf{u}})=\varepsilon\nabla_x \cdot  \frac{\sum_{k=1}^2\nu_k^{(0)}\rho_k}{\nu_{i}^{(0)}\nu_{j}^{(0)}\rho }\left\{     \pa_t(n_i {\bf{u}}) +\nabla_x\cdot (n_i {\bf{u}}\otimes {\bf{u}}) +  m_i^{\frac n2(\gamma_i - 1)-1}\nabla_x (n_i^{\gamma_i}) \right\},\qquad i=1,2, \cr
&\pa_t\{\rho {\bf{u}}\}+ \nabla_x\cdot \left\{\rho {\bf{u}}\otimes {\bf{u}} \right\}+\nabla_x \left( m_1^{\frac{n}{2}(\gamma_1-1)}n_1^{\gamma_1}+m_2^{\frac{n}{2}(\gamma_2-1)}n_2^{\gamma_2} \right)\cr 
&\quad - \nabla_x \cdot \lt(\e_1 m_1^{\frac n2(\gamma_1 - 1)}\mathbb{S}_1(\nabla_x {\bf{u}}) + \e_2  m_2^{\frac n2(\gamma_2 - 1)}  \mathbb{S}_2(\nabla_x {\bf{u}})\rt) = 0.
\end{align*}
 Here $\rho_i=m_in_i$ and $\rho=\rho_1+\rho_2$, thus the above system is closed.
 
 \begin{remark} If we assume $\nu^{(0)}=\nu_1^{(0)}=\nu_2^{(0)}$, then we further get
$$
\pa_t n_i+\nabla_x\cdot (n_i {\bf{u}})=\varepsilon\nabla_x \cdot  \frac{1}{\nu^{(0)}}\left\{     \pa_t(n_i {\bf{u}}) +\nabla_x\cdot (n_i {\bf{u}}\otimes {\bf{u}}) +  m_i^{\frac n2(\gamma_i - 1)-1}\nabla_x (n_i^{\gamma_i}) \right\},\quad i=1,2.
$$
\end{remark}

\subsection{Verification of compatibility conditions}

To ensure that the Chapman--Enskog expansion preserves the conservation of mass and momentum at each order, we verify the compatibility conditions:
\[
\intr f_i^{(1)}\,dv=0,\quad \intr v\left(m_1 f_1^{(1)} + m_2f_2^{(1)} \right) dv=0.
\]
We begin by verifying the first condition for mass conservation. Using the expression for $f_i^{(1)}$ obtained earlier, we compute
\begin{align*}
\intr f_i^{(1)}\,dv&= \intr d_im_ic_i^{\frac{2}{d_i}}(v-{\bf{u}})\cdot \left( \frac{ \sum_{i=1}^2\nu_i^{(0)} \rho_iu_i^{(1)}}{\sum_{i=1}^2\nu_i^{(0)} \rho_i} \right)\mathcal{M}_i^{\frac{d_i-2}{d_i}}[n_i,{\bf{u}}] \,dv\cr 
&\quad -\frac{d_i\gamma_i m_i^{\frac{n}{2}(\gamma_i-1)} n_i^{\gamma_i-2}}{\nu_i^{(0)}}c_i^{\frac{2}{d_i}}\intr \left( \pa_tn_i+v\cdot \nabla_x n_i \right) \mathcal{M}_i^{\frac{d_i-2}{d_i}}[n_i, {\bf{u}}]\,dv \cr
&\quad -  \frac{d_i m_i}{\nu_i^{(0)}}c_i^{\frac{2}{d_i}}\intr   (v-{\bf{u}}) \cdot \left(\pa_t {\bf{u}} + (v \cdot \nabla_x) {\bf{u}}\right)\mathcal{M}_i^{\frac{d_i-2}{d_i}}[n_i, {\bf{u}}]  \,dv\cr
&=- \frac{1}{\nu_i^{(0)}} \left( \pa_tn_i+{\bf{u}}\cdot \nabla_x n_i \right) - \frac{d_i m_i}{\nu_i^{(0)}} c_i^{\frac{2}{d_i}}\intr   v \cdot \left(  (v \cdot \nabla_x) {\bf{u}}\right)\mathcal{M}_i^{\frac{d_i-2}{d_i}}[n_i, 0]  \,dv  \cr 
&=-  \frac{1}{\nu_i^{(0)}}\left( \pa_tn_i+{\bf{u}}\cdot \nabla_x n_i \right) - \frac{n_i}{\nu_i^{(0)}}\nabla_x\cdot {\bf{u}}\cr 
&=-\frac{1}{\nu_i^{(0)}}\left\{\pa_t n_i +\nabla_x\cdot(n_i{\bf{u}}) \right\}\cr 
&=\frac{1}{\nu_i^{(0)}}\nabla_x \cdot \{n_i(u_i-{\bf{u}})\},
\end{align*}
and hence it vanishes when evaluated under the zeroth-order relation $u_i = {\bf{u}}$.

Next, we check the condition for momentum conservation:
\begin{align*}
\intr v f_i^{(1)} \,dv&=n_i u_i^{(1)}  =-\frac{\sum_{k=1}^2\nu_k^{(0)}\rho_k}{\nu_{i}^{(0)}\nu_{j}^{(0)}\rho }\left\{     \pa_t(n_i {\bf{u}}) +\nabla_x\cdot (n_i {\bf{u}}\otimes {\bf{u}}) + m_i^{\frac n2(\gamma_i - 1)-1} \nabla_x (n_i^{\gamma_i}) \right\}
\end{align*}
where $j\neq i$.
Summing over $i=1,2$ with weights $m_1$ and $m_2$ gives
\begin{align*}
&\intr v\left(m_1 f_1^{(1)} + m_2f_2^{(1)} \right) dv\cr
&\quad =-\frac{\sum_{k=1}^2\nu_k^{(0)}\rho_k}{\nu_{i}^{(0)}\nu_{j}^{(0)}\rho }\left\{ \pa_t\{\rho {\bf{u}}\} +\nabla_x\cdot \{\rho {\bf{u}}\otimes {\bf{u}}\} + \nabla_x (m_1^{\frac n2(\gamma_1 - 1)} n_1^{\gamma_1}+m_1^{\frac n2(\gamma_2 - 1)}n_2^{\gamma_2}) \right\}.
\end{align*}
Again, this involves the left-hand side of the momentum balance in \eqref{balance}, which vanishes at leading order. Thus, the momentum compatibility condition is satisfied. Therefore, both compatibility conditions \eqref{compatibility} are explicitly verified at the level of first-order corrections, confirming the consistency of the Chapman--Enskog expansion with the underlying conservation laws.

Having derived the compressible two-phase Euler system at leading order and verified that the first-order correction terms preserve both mass and momentum, we now summarize the outcome of the Chapman--Enskog expansion in the following theorem.

 \begin{theorem}[Compressible two-phase fluid limit via Chapman--Enskog expansion]\label{thm:formal}
Consider the kinetic BGK model for a two-species mixture \eqref{CE main}. Suppose that the distribution functions admit the Chapman--Enskog expansion
\[
f_i = \mathcal{M}_i[n_i, {\bf{u}}] + \varepsilon f_i^{(1)},
\]
and the macroscopic quantities $n_i$ and ${\bf{u}}$ are independent of $\varepsilon$. Then the following statements hold:
\begin{enumerate}
    \item[(i)] The zeroth-order macroscopic equations yield the compressible two-phase Euler system:
    \begin{align*}
    &\partial_t n_i + \nabla_x \cdot (n_i {\bf{u}}) = 0, \quad i=1,2, \\
    &\partial_t \left( \rho  {\bf{u}} \right) + \nabla_x \cdot \left( \rho {\bf{u}} \otimes {\bf{u}} \right) + \nabla_x \left( m_1^{\frac{n}{2}(\gamma_1 - 1)} n_1^{\gamma_1} + m_2^{\frac{n}{2}(\gamma_2 - 1)} n_2^{\gamma_2} \right) = 0.
    \end{align*}

    \item[(ii)] At first order in $\varepsilon$, the system closes to a compressible two-phase Navier--Stokes-type system with density-dependent viscosity:
\begin{align*}
    &\partial_t n_i + \nabla_x \cdot (n_i {\bf{u}}) = \varepsilon \nabla_x \cdot \frac{\sum_{k=1}^2\nu_k^{(0)}\rho_k}{\nu_{i}^{(0)}\nu_{j}^{(0)}\rho }\left\{ \partial_t (n_i {\bf{u}}) + \nabla_x \cdot (n_i {\bf{u}} \otimes {\bf{u}}) +m_i^{\frac n2(\gamma_i - 1)-1} \nabla_x (n_i^{\gamma_i}) \right\},  \\
    &\partial_t \left( \rho  {\bf{u}} \right) + \nabla_x \cdot \left( \rho  {\bf{u}} \otimes {\bf{u}} \right) + \nabla_x (p_1 + p_2)   = \nabla_x \cdot \left( \varepsilon_1 m_1^{\frac{n}{2}(\gamma_1 - 1)} \mathbb{S}_1 + \varepsilon_2 m_2^{\frac{n}{2}(\gamma_2 - 1)} \mathbb{S}_2 \right),
    \end{align*}
    where $p_i = m_i^{\frac{n}{2}(\gamma_i - 1)} n_i^{\gamma_i}$ and the viscosity stress tensors $\mathbb{S}_i$ are defined by
    \[
    \mathbb{S}_i := 2 \mu_i(n_i) \mathbb{T}(\nabla_x u) + \lambda_i(n_i) (\nabla_x \cdot u) \mathbb{I}_{n \times n}, \quad
    \mathbb{T}(\nabla_x u) := \frac{1}{2}(\nabla_x u + (\nabla_x u)^T)
    \]
    with Lam\'e viscosity coefficients:
    \[
    \mu_i(n_i) = \frac{n }{n+2}\frac{\gamma_i}{\nu_i^{(0)}} n_i^{\gamma_i}, \qquad
    \lambda_i(n_i) = -\frac{2 }{n+2} \frac{\gamma_i}{\nu_i^{(0)}}n_i^{\gamma_i}.
    \]

    \item[(iii)] The first-order correction $f_i^{(1)}$ satisfies the compatibility conditions:
    \[
    \int_{\mathbb{R}^n} f_i^{(1)}(v)\,dv = 0, \qquad
    \int_{\mathbb{R}^n} v \left( m_1 f_1^{(1)} + m_2 f_2^{(1)} \right)(v) \, dv = 0,
    \]
    ensuring that both mass and total momentum are preserved up to first order.
\end{enumerate}
\end{theorem}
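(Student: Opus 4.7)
The plan is to organize the computations of Sections 3.1--3.3 into the three assertions. First I would insert the ansatz $f_i = f_i^{(0)} + \varepsilon f_i^{(1)}$ into the scaled equation \eqref{CE main} and balance powers of $\varepsilon$. The $O(\varepsilon^{-1})$ balance yields $f_i^{(0)} = \mathcal{M}_i^{(0)} = \mathcal{M}_i[n_i, u^{(0)}]$. Combining this with the first compatibility condition in \eqref{compatibility} and the defining moment identity for the truncated Maxwellian identifies $u^{(0)} = \mathbf{u}$, giving \eqref{u^0}. Taking moments of \eqref{CE main} against $(1, m_i v)$ gives the balance laws \eqref{balance}; at leading order $P_i^{(0)} = m_i^{\frac{n}{2}(\gamma_i-1)} n_i^{\gamma_i} \mathbb{I}_{n\times n}$ by Lemma \ref{lem_ident} and $u_i^{(0)} = \mathbf{u}$ by \eqref{u_01}, closing the system into the Euler equations of (i).

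For (ii) I would work at order $O(1)$. Solving the $\varepsilon^0$ part of \eqref{e eqn} algebraically produces the explicit formula \eqref{f1} for $f_i^{(1)}$, which involves both the velocity correction entering $\mathcal{M}_i^{(1)}$ via \eqref{M1} and the temporal/spatial derivatives of $\mathcal{M}_i^{(0)}$. Multiplying \eqref{f1} by $v$ and integrating, the moments $I_1,\dots,I_5$ are evaluated using the three identities of Lemma \ref{lem_vm}, yielding \eqref{u_110}. The second compatibility condition in \eqref{compatibility} forces $\rho_1 u_1^{(1)} + \rho_2 u_2^{(1)} = 0$, which eliminates the implicit mean-flow term and produces the closed expression \eqref{u_11} for $u_i^{(1)}$. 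A parallel second-moment computation for $P_i^{(1)}$, again using Lemma \ref{lem_vm}(ii)--(iii), decomposes into three pieces: the first vanishes by oddness, the second is handled via symmetry and a change of variables, and the third gives the symmetric gradient contribution. Substituting $\partial_t n_i + \nabla_x\cdot(n_i\mathbf{u})$ from the $O(\varepsilon)$ correction to the continuity equation, and then reformulating via the Lam\'e coefficients $\mu_i, \lambda_i$ and the strain tensor $\mathbb{T}(\nabla_x\mathbf{u})$, recovers the Navier--Stokes system claimed in (ii) with stress tensors $\mathbb{S}_i$.

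Assertion (iii) is then an \emph{a posteriori} consistency check. For the mass moment, I would integrate \eqref{f1} over $v$ and note that, after using the moment identities, the result reduces to $\nu_i^{(0)\,-1} \nabla_x\cdot\{n_i(u_i-\mathbf{u})\}$, which vanishes at the zeroth order where $u_i^{(0)}=\mathbf{u}$. For the total momentum moment, I would multiply $u_i^{(1)}$ from \eqref{u_11} by $\rho_i$, sum over $i$, and observe the resulting expression is proportional to the left-hand side of the momentum balance in \eqref{balance}, which is zero at leading order by the Euler system of (i).

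The computational bookkeeping of the moments in $f_i^{(1)}$ and $P_i^{(1)}$ is the main technical obstacle: one must carefully separate contributions from $\mathbf{u}$-shifts in $\mathcal{M}_i^{\frac{d_i-2}{d_i}}[n_i,\mathbf{u}]$, exploit oddness of centered moments, and repeatedly apply Lemma \ref{lem_vm}, where the fourth-moment identity (iii) is the delicate step that produces the symmetric gradient structure $\nabla_x\mathbf{u} + (\nabla_x\mathbf{u})^t$ and thereby the Stokes hypothesis $2\mu_i + n\lambda_i = 0$. Once these moment evaluations and the use of the leading-order Euler identities are handled cleanly, all three parts of the theorem follow directly from the formulas derived in Sections 3.1--3.3.
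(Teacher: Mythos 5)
Your sketch mirrors the paper's derivation in Sections 3.1--3.3 faithfully: the $\varepsilon$-power balance giving $f_i^{(0)}=\mathcal{M}_i^{(0)}$, the identification $u^{(0)}=\mathbf{u}$, the explicit formula \eqref{f1} for $f_i^{(1)}$, the moment computations via Lemma \ref{lem_vm}, the use of $\rho_1 u_1^{(1)}+\rho_2 u_2^{(1)}=0$ from the momentum compatibility, the three-piece decomposition of $P_i^{(1)}$, and the Lam\'e-coefficient rewriting are all the same steps in the same order. One small slip in part (i): identifying $u^{(0)}=\mathbf{u}$ uses the \emph{second} compatibility condition of \eqref{compatibility} (the total-momentum constraint), not the first (mass) one, together with $\int v\,\mathcal{M}_i[n_i,u^{(0)}]\,dv=n_i u^{(0)}$.
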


%%%%%%%%%%%%%%%%%%%%%%%%%%%%%%%%%%%%%%%%%%%%%%%%%%%%%%%%%%%%%%%%%%%%%%%%%%%%%%%%%%

\section{Rigorous derivation: Relative entropy method}\label{sec_re}
%%%%%%%%%%%%%%%%%%%%%%%%%%%%%%%%%%%%%%%%%%%%%%%%%%%%%%%%%%%%%%%%%%%%%%%%%%%%%%%%%%

In this section, we rigorously derive the isentropic two-phase Euler equations from the BGK-type kinetic model \eqref{CE main}. For clarity and without loss of generality, we set the collision frequencies to unity, i.e., $\nu_1 = \nu_2 = 1$.

We begin by recalling the form of the limiting system. The two-phase compressible Euler system reads
\begin{equation}\label{eq: 2-Euler}
\begin{split}
    &\p_t n_i + \nabla_x \cdot (n_i {\bf{u}}) = 0,\\
    &\p_t (\rho {\bf{u}}) + \nabla_x\cdot (\rho {\bf{u}}\otimes {\bf{u}}) + \nabla_x((\gamma_1-1) s_1(n_1) + (\gamma_2 - 1) s_2(n_2)) = 0,\\
    &s_i(n_i) := \frac{m_i^{\frac{n}{2}(\gamma_i - 1)}}{\gamma_i - 1} n_i^{\gamma_i},\\
    &\rho_i := m_i n_i, \quad  \rho := \rho_1 + \rho_2.
\end{split}
\end{equation}

To express this system compactly, we introduce the macroscopic observables
\[
    U := (n_1 \; n_2 \; w)^t, \quad w := \rho {\bf{u}} ,
\]
so that the system \eqref{eq: 2-Euler} can be written as a system of conservation laws:
\begin{align*}
    \p_t U + \nabla_x \cdot A(U) = 0, \quad A(U) := \begin{pmatrix}
        \frac{n_1}{\rho} w^t \\[6pt]
        \frac{n_2}{\rho}w^t \\[6pt]
        \frac{w\otimes w}{\rho} + ((\gamma_1-1)s_1(n_1) + (\gamma_2-1)s_2(n_2)) \mathbb{I}_{n\times n}
    \end{pmatrix}.
\end{align*}
This system admits the following entropy-entropy flux pair:
\begin{align*}
    \eta(U) := \frac{1}{2} \frac{|w|^2}{\rho} + s_1(n_1) + s_2(n_2),  \quad G(U) := u \left(\frac{1}{2}\frac{|w|^2}{\rho} + \gamma_1 s_1(n_1) + \gamma_2 s_2(n_2) \right).
\end{align*}

To measure the discrepancy between two states, we define the relative entropy associated with a convex functional $\eta$ as
\[
    \eta(V|U) := \eta(V) - \eta(U) - D\eta(U)(V-U).
\]
In the sequel, we denote the relative entropy associated with $\eta$ by $\eta(V|U)$. 

The following theorem provides a quantitative convergence estimate for the hydrodynamic limit of the kinetic model \eqref{CE main}. It shows that, under suitable regularity assumptions and well-prepared initial data, the macroscopic observables of the kinetic solution converge to those of the Euler system with explicit rates.
\begin{theorem}\label{thm: relent}
    Let $s>\frac n2 + 1$, $1 < \gamma_i \leq 2$, and suppose $(n_1,n_2,{\bf{u}})\in L^\infty(0,T^*;H^s(\Omega)\times H^s(\Omega)\times H^s(\Omega))$ is a unique strong solution to \eqref{eq: 2-Euler} for some $T^*>0$, with initial data $({}^{\rm in}n_1, {}^{\rm in}n_2, {}^{\rm in}{\bf{u}})$ satisfying
    \begin{align*}
        \int_\Omega \left(\frac{m_1 {}^{\rm in}n_1 + m_2{}^{\rm in}n_2}{2} |{}^{\rm in}{\bf{u}}|^2 + s_1({}^{\rm in}n_1) + s_2({}^{\rm in}n_2) + {}^{\rm in}n_1 + {}^{\rm in}n_2 \right)\,dx < +\infty.
    \end{align*}
    Assume further that there exist constants $\lambda,\Lambda>0$  such that
\bq\label{bdd_rho}
        \lambda \le n_1(t,x), \,n_2(t,x) \le \Lambda \qquad \forall \, (t,x)\in [0,T^*]\times \Omega.
\eq
    Let $\{f^\ve\} = \{f_1^\ve, f_2^\ve\}_{\ve>0}$ be a family of weak solutions to \eqref{CE main}, with initial data $\{{}^{\rm in}f^{\ve}\}$ satisfying
    \begin{align*}
        &\iint_{\Omega \times \R^n} {}^{\rm in}f^\ve_1 + {}^{\rm in}f_2^\ve\,dxdv < +\infty, \\
        &\iint_{\Omega \times \R^n} h_1({}^{\rm in}f_1^\ve,v) + h_2({}^{\rm in}f_2^\ve,v)\, dxdv < +\infty.
    \end{align*}
Suppose further that the weak solutions satisfy the entropy inequality
    \begin{align*}
        \iint_{\Omega \times \R^n} H(f^\ve,v)\,dv + \frac{1}{\ve} \sum_{i=1}^2 \int_0^t \iint_{\Omega \times \R^n} h_i(f_i^\ve,v) - h_i(\calM_i(f^\ve),v)\,dxdvds \le \iint_{\Omega \times \R^n} H({}^{\rm in}f^\ve,v)\,dxdv.
    \end{align*}
Let the macroscopic observables be defined by    
\[
U^\ve := (n_1^\ve\; n_2^\ve\; w^\ve)^t, \quad w^\ve = \rho^\ve {\bf{u}}^\ve, \quad \rho^\ve:= \rho_1^\ve + \rho_2^\ve.
\]
If the initial data are well-prepared in the sense that
\[
        \iint_{\Omega \times \R^n} H({}^{\rm in}f^\ve) - H(\calM_1({}^{\rm in}f^\ve),\calM_2({}^{\rm in}f^\ve))\,dxdv \lesssim \ve^{1/4},
\]
    then the relative entropy satisfies, for $t\in [0,T^*]$, the estimate
\[
        \int_{\Omega} \eta(U^\ve | U)\,dx \lesssim \ve^{1/4}.
\]
    In particular, we have the following quantitative convergences:
    \begin{equation}\label{eq: hyd}
    \begin{split}
        \|n_i^\ve - n_i\|_{L^\infty(0,T^*;L^{\gamma_i})} \lesssim \ve^{1/8},\\
        \|(\rho_1^\ve + \rho_2^\ve)  {\bf{u}}^\ve - (\rho_1 + \rho_2) {\bf{u}}\|_{L^\infty(0,T^*;L^1)} \lesssim \ve^{1/8}, \\
        \|(\rho_1^\ve + \rho_2^\ve){\bf{u}}^\ve\otimes {\bf{u}}^\ve - (\rho_1 + \rho_2) {\bf{u}}\otimes {\bf{u}}\|_{L^\infty(0,T^*;L^1)} \lesssim \ve^{1/8}.
        \end{split}
    \end{equation}
\end{theorem}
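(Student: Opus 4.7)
The strategy is the relative entropy (modulated energy) method, combining the kinetic entropy inequality of Section \ref{sec_pre} with the Dafermos--DiPerna weak--strong stability framework for the limiting Euler system \eqref{eq: 2-Euler}. Since the paper normalizes $\nu_1=\nu_2=1$, the common velocity in the Maxwellian coincides with the bulk velocity, $u^\ve={\bf u}^\ve$, and the entropy minimization principle of Section \ref{sec_pre} yields $\sum_i \int_{\R^n} h_i(f_i^\ve,v)\,dv \ge \eta(U^\ve)$ pointwise in $x$, with equality precisely at $f_i^\ve=\mathcal{M}_i[n_i^\ve,{\bf u}^\ve]$. I would therefore introduce the \emph{modulated entropy}
\begin{equation*}
\mathcal{E}^\ve(t) := \iint_{\Omega\times\R^n} H(f^\ve,v)\,dv\,dx - \int_\Omega \eta(U)\,dx - \int_\Omega D\eta(U)\cdot(U^\ve-U)\,dx,
\end{equation*}
which dominates $\int_\Omega \eta(U^\ve|U)\,dx$. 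A direct computation using the explicit form of $\eta$ identifies
\begin{equation*}
\eta(U^\ve|U) = \tfrac{1}{2}\rho^\ve\,|{\bf u}^\ve-{\bf u}|^2 + s_1(n_1^\ve|n_1) + s_2(n_2^\ve|n_2),
\end{equation*}
whose coercivity, together with the a priori bound \eqref{bdd_rho} and $1<\gamma_i\le2$, is what will yield the convergences \eqref{eq: hyd} at the end.

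The next step is to differentiate $\mathcal{E}^\ve$ in time. The first term is bounded by the given entropy inequality, producing the kinetic dissipation $-\ve^{-1}\sum_i\iint[h_i(f_i^\ve,v)-h_i(\mathcal{M}_i[n_i^\ve,{\bf u}^\ve],v)]\,dxdv$. For the last two terms I would use the chain rule together with the Euler system for $U$ and the moment system satisfied by $U^\ve$, obtained by taking the $(1,m_iv)$-moments of \eqref{CE main}:
\begin{equation*}
\partial_t n_i^\ve + \nabla_x\cdot(n_i^\ve u_i^\ve)=0,\qquad \partial_t w^\ve + \nabla_x\cdot\Bigl(\tfrac{w^\ve\otimes w^\ve}{\rho^\ve} + P^\ve\Bigr)=0,
\end{equation*}
with $P^\ve := \sum_i m_i\int(v-{\bf u}^\ve)\otimes(v-{\bf u}^\ve) f_i^\ve\,dv$ (the summed momentum relaxation vanishes by the very definition of $u^\ve$). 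Integration by parts and the Euler entropy pair $(\eta,G)$ lead to a Dafermos--DiPerna identity of the form
\begin{equation*}
\mathcal{E}^\ve(t) + \tfrac{1}{\ve}\sum_i\int_0^t\!\!\iint\bigl[h_i(f_i^\ve)-h_i(\mathcal{M}_i^\ve)\bigr]dxdvds \le \mathcal{E}^\ve(0) + \int_0^t\!\!\int_\Omega \nabla_x D\eta(U):\bigl[A(U^\ve|U) + \mathcal{R}^\ve\bigr]dxds,
\end{equation*}
in which $A(U^\ve|U)$ denotes the quadratic relative flux, controlled by $C\|\nabla_x D\eta(U)\|_{L^\infty}\int\eta(U^\ve|U)\,dx$ thanks to strict convexity of $\eta$ on the range \eqref{bdd_rho}, and the \emph{flux mismatch} $\mathcal{R}^\ve$ gathers the species-velocity discrepancy $n_i^\ve(u_i^\ve-{\bf u}^\ve)$ in the continuity blocks and the nonequilibrium stress $P^\ve - [(\gamma_1-1)s_1(n_1^\ve)+(\gamma_2-1)s_2(n_2^\ve)]\mathbb{I}_{n\times n}$ in the momentum block.

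The technical heart of the argument---and the step I expect to be the main obstacle---is to absorb $\mathcal{R}^\ve$ into the kinetic dissipation with only a small loss. Both components of $\mathcal{R}^\ve$ are $v$-moments of $f_i^\ve-\mathcal{M}_i[n_i^\ve,{\bf u}^\ve]$, and convexity of $h_i$ provides a quadratic control of such differences by the entropy deficit, but only against weights that are integrable against $h_i$, not against the $|v|^2$-weight needed for the stress. I would handle this by splitting the velocity integral at a level $R=R(\ve)$: on $\{|v|\le R\}$, Cauchy--Schwarz converts the moment into $R^C$ times the square root of the entropy deficit; on $\{|v|>R\}$, the tail is controlled by the uniform-in-$\ve$ kinetic-energy bound furnished by the entropy inequality (together with the compact $v$-support of $\mathcal{M}_i$ when $\gamma_i\in(1,\tfrac{n+2}{n})$ and $n_i^\ve$ is locally bounded). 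Optimizing in $R$ and applying Young's inequality $ab\le\delta\ve^{-1}a^2+C_\delta\ve b^2$ absorbs the $\delta\ve^{-1}$ part into the left-hand dissipation and leaves an $O(\ve^{1/2})$ forcing. Grönwall on $[0,T^*]$ together with the hypothesis $\mathcal{E}^\ve(0)\lesssim\ve^{1/4}$ then gives $\int_\Omega \eta(U^\ve|U)\,dx\lesssim\ve^{1/4}$, the initial error being the limiting contribution. The three rates in \eqref{eq: hyd} follow from coercivity: Cauchy--Schwarz on $\rho^\ve|{\bf u}^\ve-{\bf u}|^2$ against the conserved mass yields the $L^1$ momentum rate $\ve^{1/8}$, the same bound applied to $\rho^\ve({\bf u}^\ve-{\bf u})\otimes{\bf u}^\ve$ and $\rho^\ve{\bf u}\otimes({\bf u}^\ve-{\bf u})$ (using the $L^\infty$ bound on ${\bf u}$ and an $L^1$ bound on $\rho^\ve-\rho$ obtained from the density estimate) handles the kinetic-energy flux, and the Bregman lower bound $s_i(n_i^\ve|n_i)\gtrsim|n_i^\ve-n_i|^{\gamma_i}$ (valid for bounded $n_i$ and $\gamma_i\in(1,2]$) gives $\|n_i^\ve-n_i\|_{L^{\gamma_i}}\lesssim \ve^{1/(4\gamma_i)}\le \ve^{1/8}$.
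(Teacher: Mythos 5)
Your overall strategy---relative entropy and Dafermos--DiPerna stability, with the kinetic dissipation on the left and the moment discrepancies $\int v^\alpha(\calM_i^\ve-f_i^\ve)\,dv$ ($|\alpha|=1,2$) as the source to control---matches the paper's. Your modulated functional $\mathcal{E}^\ve$ keeps the full kinetic entropy $\iint H(f^\ve)$ rather than its macroscopic lower bound $\int\eta(U^\ve)$; this is a harmless variant (the paper works directly with $\int\eta(U^\ve|U)\,dx$ and bounds the gap $\int\eta(U^\ve)-\eta({}^{\rm in}U^\ve)$ using the entropy inequality plus well-preparedness). The relative flux estimate via strict convexity of $\eta$ under \eqref{bdd_rho} and the $1<\gamma_i\le 2$ Bregman bound at the end also track the paper's Lemma \ref{lem: flux}.

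The genuine gap is in the step you flag as ``the technical heart.'' Your proposed control of the stress mismatch by a $|v|\le R$ versus $|v|>R$ split does not close. On the tail $\{|v|>R\}$, the available a priori information is only a \emph{uniform} bound on $\iint|v|^2 f_i^\ve\,dxdv$; this gives no decay in $R$, and you have no uniform fourth moment to run a Markov-type argument, so the tail contribution cannot be made small---neither in $R$ nor in $\ve$. Moreover, bounding the support of $\mathcal{M}_i[n_i^\ve,{\bf u}^\ve]$ requires a pointwise bound on $n_i^\ve$, which is \emph{not} among the hypotheses: only the limiting Euler densities $n_1,n_2$ are bounded between $\lambda$ and $\Lambda$; the kinetic densities $n_i^\ve$ are controlled only in $L^\infty_t L^{\gamma_i}_x$ through the entropy. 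On the bulk $\{|v|\le R\}$, ``Cauchy--Schwarz converts the moment into $R^C\sqrt{D_i}$'' is also not immediate, since the entropy deficit $D_i=\int[h_i(f_i^\ve)-h_i(\calM_i^\ve)]\,dv$ is a Bregman divergence of the functional $p\mapsto \frac{m_i}{2}|v|^2p+\frac{c_i^{-2/d_i}}{2}\frac{p^{1+2/d_i}}{1+2/d_i}$ rather than a weighted $L^2$ distance, and converting it into a useful quadratic control of $f_i^\ve-\calM_i^\ve$ requires exploiting this specific structure (large/small $f$ splitting, not large/small $v$). The paper sidesteps all of this by invoking a known moment estimate (\cite[Proposition 4.1]{BV05}, \cite[Lemma 1.4]{KS24}),
\[
\int_{\R^n}|v|^2\,|f_i^\ve-\calM_i^\ve|\,dv \;\lesssim\; (n_i^\ve)^{\gamma_i/2}\sqrt{D_i}+D_i,
\]
which does not require pointwise bounds on $n_i^\ve$ and whose $x$-integral is controlled by $\|n_i^\ve\|_{L^\infty_tL^{\gamma_i}_x}$ and $\int_0^t\!\!\int D_i\lesssim\ve$. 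The first-moment piece is then handled by H\"older against the conserved mass, and the cascade of square roots is what produces the $\ve^{1/4}$ forcing; your claimed $\ve^{1/2}$ forcing is not substantiated and, given the gaps above, should not be trusted. To repair the argument you should replace the $|v|$-splitting step with this second-moment-versus-entropy-deficit lemma (or reprove it from the convexity of $h_i$ in the $f$-variable).
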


\begin{remark} The local-in-time existence and uniqueness of strong solutions to the system \eqref{eq: 2-Euler} satisfying the uniform bounds in \eqref{bdd_rho} can be established under appropriate assumptions on the initial data, see \cite{D79} and \cite[Proposition 1.3]{MV08}. On the other hand, the global-in-time existence of global solutions to the kinetic model \eqref{CE main} that satisfy the required entropy inequality has not been studied before, to our best knowledge. For completeness, we therefore provide the corresponding existence theory in Appendix \ref{app_gwp}.  
\end{remark}
%%%%%%%%%%%%%%%%%%%%%%%%%%%%%%%%%%%%%%%%%%%%%%%%%%%%%%%%%%%%%%%%%%%%%%%%%%%%%%%%%%

\subsection{Moment equations and setup for relative entropy analysis}

Our starting step is to write the conservation laws satisfied by the velocity moments of the kinetic equation \eqref{CE main}. Following the balance identities and using Lemma \ref{lem_ident}, we obtain
\begin{align*}
&\p_t n_i^\ve + \nabla_x \cdot (n_i^\ve {\bf{u}}_i^\ve) = 0, \quad i=1,2,\\
 &\p_t (n_i^\ve {\bf{u}}_i^\ve) + \nabla_x\cdot \lt(n_i^\ve {\bf{u}}^\ve \otimes {\bf{u}}^\ve + m_i^{\frac{n}{2}(\gamma_i-1)-1 } n_i^{\gamma_i} \mathbb{I}_{n\times n}\rt) \cr
 &\quad = \nabla_x \cdot \left(\intr v\otimes v \lt( \calM_1(f_1^\ve,f_2^\ve) - f_1^\ve  \rt) dv \right)  + \frac{1}{\ve} n_i^\ve ({\bf{u}}^\ve - {\bf{u}}_i^\ve).
\end{align*}
In particular, multiplying the momentum equations by $m_i$ and summing over $i$, the singular relaxation terms cancel out. This yields a conservation law for $U^\varepsilon$ of the form
\[
    \p_t U^\ve + \nabla_x\cdot A(U^\ve) = \nabla_x\cdot r(U^\ve),
\]
where the remainder term is
\[
    r(U^\ve) = \begin{pmatrix} n_1^\ve({\bf{u}}^\ve - {\bf{u}}_1^\ve) \\  n_2^\ve({\bf{u}}^\ve - {\bf{u}}_2^\ve) \\ \sum_{i=1}^2 \intr v\otimes v (\calM_i - f_i^\ve)\,dv \end{pmatrix} = \begin{pmatrix}
        \intr v( \calM_1 - f_1^\ve)\,dv\\
        \intr v( \calM_2 - f_2^\ve)\,dv\\
        \sum_{i=1}^2 \intr v\otimes v (\calM_i - f_i^\ve)\,dv
    \end{pmatrix}.
\]

To proceed with the relative entropy framework, we now compute key quantities. The gradient and Hessian of the entropy functional $\eta$ are given by
\begin{align*}
    D\eta(U) &= \begin{pmatrix} -\frac{1}{2}\frac{m_1|w|^2}{\rho^2} + s_1'(n_1) \\[6pt]
    -\frac{1}{2}\frac{m_2|w|^2}{\rho^2} + s_2'(n_2) \\[6pt]
    \frac{w}{\rho}
    \end{pmatrix}= \begin{pmatrix} -\frac{m_1}{2}|{\bf{u}}|^2 + s_1'(n_1) \\[6pt]
     -\frac{m_2}{2}|{\bf{u}}|^2 + s_2'(n_2) \\[6pt]
   {\bf{u}}
    \end{pmatrix}
    \end{align*}
    and
    \begin{align*}
    D^2\eta(U) &=  \begin{pmatrix}
        \frac{m_1^2|w|^2}{\rho^3} + s_1''(n_1) & \frac{m_1m_2|w|^2}{\rho^3}  & -\frac{m_1 w}{\rho^2}  \\[6pt]
        \frac{m_1 m_2|w|^2}{\rho^3}& \frac{m_2^2|w|^2}{\rho^3} + s_2''(n_2) & - \frac{m_2 w}{\rho^2} \\[6pt]
        - \frac{m_1 w}{\rho^2} & - \frac{m_2 w}{\rho^2} & \frac{1}{\rho} \mathbb{I}_{n\times n}
    \end{pmatrix}  =  \begin{pmatrix}
        \frac{m_1^2|{\bf{u}}|^2}{\rho} + s_1''(n_1) & \frac{m_1m_2|{\bf{u}}|^2}{\rho}  & -\frac{m_1 {\bf{u}}}{\rho}  \\[6pt]
        \frac{m_1 m_2|{\bf{u}}|^2}{\rho}& \frac{m_2^2|{\bf{u}}|^2}{\rho} + s_2''(n_2) & - \frac{m_2 {\bf{u}}}{\rho} \\[6pt]
        - \frac{m_1 {\bf{u}}}{\rho} & - \frac{m_2 {\bf{u}}}{\rho} & \frac{1}{\rho} \mathbb{I}_{n\times n}
    \end{pmatrix}.
\end{align*}
It follows that the relative entropy takes the explicit form
\[
    \eta(U^\ve|U) = \frac{\rho^\ve}{2}|{\bf{u}}^\ve - {\bf{u}}|^2 + s_1(n_1^\ve|n_1) + s_2(n_2^\ve|n_2).
\]
We next turn to the flux $A(U)$. The relative flux is defined component-wise by
\[
    [A(U^\ve|U)]_i := A(U^\ve) - A(U) - DA_i(U)(U^\ve - U).
\]

To prepare for the stability analysis, we now compute the relative flux in a form that allows precise control of the nonlinear error terms. This quantity will play a central role in the subsequent relative entropy estimate.
\begin{lemma}
    Let $e_i = (0,\ldots, 1,\ldots, 0)\in \R^n$ be the standard basis vector with 1 at the $i$th entry and zeros elsewhere. Then for each $1 \le i \le n$, the $i$-th component of the relative flux satisfies
        \begin{align*}
         [A(U^\ve|U)]_i  &= \begin{pmatrix}
            \left[\frac{n_1}{\rho} - \frac{n_1^\ve}{\rho^\ve } \right] \rho^\ve  ({\bf{u}}_i -{\bf{u}}_i^\ve) \\[6pt]
            \left[\frac{n_2}{\rho} - \frac{n_2^\ve}{\rho^\ve} \right] \rho^\ve  ({\bf{u}}_i-{\bf{u}}_i^\ve) \\[6pt]
            \rho^\ve({\bf{u}}^\ve-{\bf{u}})({\bf{u}}_i^\ve - {\bf{u}}_i) + (\gamma_1-1)[s_1(n_1^\ve|n_1)]e_i + (\gamma_2-1)[s_2(n_2^\ve|n_2)]e_i
        \end{pmatrix} \cr
        &= \begin{pmatrix}
            \left[\frac{n_1}{\rho} - \frac{n_1^\ve}{\rho^\e} \right]  \rho^\e  ({\bf{u}}_i -{\bf{u}}_i^\ve) \\[6pt]
            \left[\frac{n_2}{\rho} - \frac{n_2^\ve}{\rho^\ve} \right]  \rho^\ve  ({\bf{u}}_i-{\bf{u}}_i^\ve) \\[6pt]
             \rho^\ve ({\bf{u}}^\ve-{\bf{u}})({\bf{u}}_i^\ve - {\bf{u}}_i) + (\gamma_1-1)[s_1(n_1^\ve|n_1)]e_i + (\gamma_2-1)[s_2(n_2^\ve|n_2)]e_i
        \end{pmatrix}.
    \end{align*}
\end{lemma}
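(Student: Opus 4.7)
\medskip

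The plan is to prove the lemma by direct computation from the definition of the relative flux, namely
\[
[A(U^\ve|U)]_i = [A(U^\ve)]_i - [A(U)]_i - D[A(U)]_i\,(U^\ve - U),
\]
treating the three rows separately and exploiting the convexity of $s_j$ for the pressure contribution. The main preparation is to write down the partial derivatives of each entry of the $i$th column of $A(U)$ with respect to the independent variables $(n_1,n_2,w)$. Recall that $\rho = m_1 n_1 + m_2 n_2$, so $\p_{n_j}\rho = m_j$; this is the only source of couplings between species in the differentiation.

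For the momentum row I would split the flux into the convective and pressure parts. For the pressure block $((\gamma_1-1)s_1(n_1) + (\gamma_2-1)s_2(n_2))\mathbb{I}_{n\times n}$, each pressure $p_j := (\gamma_j-1)s_j(n_j)$ depends only on $n_j$, so by the very definition of a relative quantity for a convex scalar function one immediately obtains the contribution $(\gamma_1-1)[s_1(n_1^\ve|n_1)]e_i + (\gamma_2-1)[s_2(n_2^\ve|n_2)]e_i$. For the convective part $w_i w/\rho$, a direct calculation of the derivatives (using $\p_{n_j}(w_i w/\rho) = -m_j {\bf u}_i {\bf u}$ and $\p_{w_k}(w_i w/\rho) = \delta_{ik}{\bf u} + {\bf u}_i e_k$) followed by substitution of $w=\rho{\bf u}$ and $w^\ve = \rho^\ve {\bf u}^\ve$ collapses all first-order cancellations into the clean bilinear form
\[
\frac{w_i^\ve w^\ve}{\rho^\ve} - \frac{w_i w}{\rho} - D\!\left(\frac{w_i w}{\rho}\right)(U^\ve - U) = \rho^\ve({\bf u}^\ve - {\bf u})({\bf u}_i^\ve - {\bf u}_i),
\]
as claimed in the third row of the lemma.

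The density rows $A_{j,i}(U) = n_j w_i/\rho$ are the only place where the bookkeeping is not entirely mechanical. I would first compute
\[
\p_{n_1}\!\left(\frac{n_1 w_i}{\rho}\right) = \frac{m_2 n_2 w_i}{\rho^2},\quad \p_{n_2}\!\left(\frac{n_1 w_i}{\rho}\right) = -\frac{m_2 n_1 w_i}{\rho^2},\quad \p_{w_k}\!\left(\frac{n_1 w_i}{\rho}\right) = \frac{n_1}{\rho}\delta_{ik},
\]
and analogously for $n_2$. After substituting $w_i = \rho{\bf u}_i$, $w_i^\ve = \rho^\ve {\bf u}_i^\ve$ and grouping terms, the relative flux simplifies to $(n_j^\ve - n_j){\bf u}_i + (n_j\rho^\ve/\rho)({\bf u}_i^\ve - {\bf u}_i)$; the final step is the algebraic identity
\[
(n_j^\ve - n_j){\bf u}_i + \frac{n_j \rho^\ve}{\rho}({\bf u}_i^\ve - {\bf u}_i) \;=\; -\!\left[\frac{n_j}{\rho} - \frac{n_j^\ve}{\rho^\ve}\right]\rho^\ve({\bf u}_i^\ve - {\bf u}_i) \;+\; (n_j^\ve{\bf u}_i^\ve - n_j{\bf u}_i),
\]
which is verified by clearing denominators and expanding using $\rho = m_1 n_1 + m_2 n_2$ and $\rho^\ve = m_1 n_1^\ve + m_2 n_2^\ve$. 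Subtracting this expression from $[A_{j,i}(U^\ve)] - [A_{j,i}(U)] = n_j^\ve {\bf u}_i^\ve - n_j {\bf u}_i$ delivers the compact form $[\tfrac{n_j}{\rho} - \tfrac{n_j^\ve}{\rho^\ve}]\rho^\ve({\bf u}_i - {\bf u}_i^\ve)$ stated in the lemma.

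The main obstacle is therefore not conceptual but computational: one must carefully keep track of the species-crossed derivatives arising from $\p_{n_k}(n_j/\rho) = -m_k n_j/\rho^2 + \delta_{jk}/\rho$ and verify that the apparently unsymmetric residual collapses onto the symmetric form involving the ratio $n_j/\rho$. Once the pressure part and the convective part are handled by the steps above, concatenating the three rows gives exactly the vector identity in the statement, and the two displayed expressions in the lemma then coincide (they are written twice for emphasis of the symmetry in the indices $i \leftrightarrow i^\ve$).
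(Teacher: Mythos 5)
Your overall strategy is the same as the paper's: compute the Jacobian $DA_i(U)$ of the $i$th flux column, form $A(U^\ve)-A(U)-DA_i(U)(U^\ve-U)$, and simplify. The paper's proof is in fact a sketch that stops after displaying $DA_i(U)(U^\ve-U)$, whereas you carry the algebra further, which is useful. Your computations for the momentum row (convective bilinear term plus the relative entropy terms from the pressures) check out.

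However, there is a sign error in the displayed algebraic identity for the density rows. What your preceding simplification shows is
\[
DA_{j,i}(U)(U^\ve-U) = (n_j^\ve - n_j)\,{\bf u}_i + \frac{n_j\rho^\ve}{\rho}\,({\bf u}_i^\ve - {\bf u}_i),
\]
and the correct rearrangement of this (using $n_j\rho^\ve - n_j^\ve\rho = m_k(n_j n_k^\ve - n_j^\ve n_k)$, $k\ne j$) reads
\[
(n_j^\ve - n_j)\,{\bf u}_i + \frac{n_j\rho^\ve}{\rho}\,({\bf u}_i^\ve - {\bf u}_i)
\;=\; +\left[\frac{n_j}{\rho} - \frac{n_j^\ve}{\rho^\ve}\right]\rho^\ve\,({\bf u}_i^\ve - {\bf u}_i) + \bigl(n_j^\ve {\bf u}_i^\ve - n_j{\bf u}_i\bigr),
\]
with a plus sign in front of the bracket, not the minus sign you wrote. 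As you have it, substituting the right-hand side into $(n_j^\ve{\bf u}_i^\ve - n_j{\bf u}_i) - DA_{j,i}(U)(U^\ve-U)$ would produce $-\bigl[\tfrac{n_j}{\rho}-\tfrac{n_j^\ve}{\rho^\ve}\bigr]\rho^\ve({\bf u}_i-{\bf u}_i^\ve)$, which contradicts both your stated conclusion and the lemma. Note also that the quantity you call ``the relative flux'' just before the displayed identity is actually the Jacobian term $DA_{j,i}(U)(U^\ve-U)$; the relative flux is only obtained after the final subtraction. Both slips are local and the final answer you state is correct, so the argument is essentially sound once these are repaired.
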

\begin{proof}
Since the result follows from direct computation, we only outline the relevant quantities. First, compute the Jacobian matrix $DA_i(U)$ of the $i$th flux component:
    \begin{align*}
        DA_i(U)&= \begin{pmatrix}
            \frac{\rho_2}{\rho^2} w_i & - \frac{m_2  n_1}{\rho^2}w_i & \frac{n_1}{\rho} e_i^t \\
            -\frac{m_1   n_2}{\rho^2} w_i & \frac{\rho_1}{\rho^2}w_i & \frac{n_2}{\rho} e_i^t \\
            -\frac{m_1   w_iw}{\rho^2} +  (\gamma_1-1)s_1'(n_1) e_i & -\frac{m_2  w_iw}{\rho^2} +  (\gamma_2-1)s_2'(n_2) e_i  & \frac{1}{\rho} ( w_i \mathbb{I}_{n\times n} + R_i(w))
        \end{pmatrix} \cr
           & = \begin{pmatrix}
            \frac{\rho_2}{\rho^2} w_i & - \frac{m_2  n_1}{\rho^2}w_i & \frac{n_1}{\rho} e_i^t \\
            -\frac{m_1   n_2}{\rho^2} w_i & \frac{\rho_1}{\rho^2}w_i & \frac{n_2}{\rho} e_i^t \\
            -\frac{m_1   w_iw}{\rho^2} +  (\gamma_1-1)s_1'(n_1) e_i & -\frac{m_2  w_iw}{\rho^2} +  (\gamma_2-1)s_2'(n_2) e_i  & \frac{1}{\rho} ( w_i \mathbb{I}_{n\times n} + R_i(w))
        \end{pmatrix},
    \end{align*}
    where $R_i(w)$ is the matrix with the $i$th column equal to $w$ and zeros elsewhere:
    \begin{align*}
        R_i(w) := \begin{pmatrix}
            0 & 0 & \cdots & \underbrace{w}_{\textnormal{$i$th column}} & \cdots & 0 
        \end{pmatrix} \in \R^{n\times n}.
    \end{align*}
    Then, applying this Jacobian to $U^\varepsilon - U$ and simplifying gives  
    \begin{align*}
        &DA_i(U)\cdot (U^\ve - U) \\
        &\quad = \begin{pmatrix}
            \frac{\rho_2}{\rho^2} w_i (n_1^\ve - n_1) - \frac{m_2  n_1}{\rho^2} w_i(n_2^\ve - n_2) + \frac{n_1}{ \rho} (w_i^\ve - w_i) \\
            \vtop{\hsize=12cm \hrulefill}\\
            -\frac{m_1  n_2}{\rho^2} w_i (n_1^\ve-n_1) + \frac{\rho_1}{\rho^2} w_i (n_2^\ve - n_2) + \frac{n_2}{\rho}(w_i^\ve - w_i) \\
            \vtop{\hsize=12cm \hrulefill}\\
            -\frac{m_1  w_i w}{\rho^2} (n_1^\ve - n_1) - \frac{m_2 w_i w}{\rho^2}  (n_2^\ve - n_2) \\[6pt]
            + (\gamma_1-1)s_1'(n_1) (n_1^\ve - n_1) e_i + (\gamma_2-1)s_2'(n_2) (n_2^\ve - n_2)e_i \\[6pt]
            + \frac{w_i}{\rho}(w^\ve - w) + \frac{w}{\rho} (w_i^\ve - w_i)
        \end{pmatrix} \cr
         &\quad = \begin{pmatrix}
            \frac{\rho_2}{\rho} {\bf{u}}_i (n_1^\ve - n_1) - \frac{m_2  n_1}{\rho} {\bf{u}}_i(n_2^\ve - n_2) + \frac{n_1}{\rho} (w_i^\ve - w_i) \\
            \vtop{\hsize=12cm \hrulefill}\\
            -\frac{m_1  n_2}{\rho} {\bf{u}}_i (n_1^\ve-n_1) + \frac{\rho_1}{\rho} {\bf{u}}_i (n_2^\ve - n_2) + \frac{n_2}{\rho}(w_i^\ve - w_i) \\
            \vtop{\hsize=12cm \hrulefill}\\
            - m_1  {\bf{u}}_i {\bf{u}}  (n_1^\ve - n_1) - m_2 {\bf{u}}_i {\bf{u}}  (n_2^\ve - n_2) \\[6pt]
            + (\gamma_1-1)s_1'(n_1) (n_1^\ve - n_1) e_i + (\gamma_2-1)s_2'(n_2) (n_2^\ve - n_2)e_i \\[6pt]
            + {\bf{u}}_i(w^\ve - w) + {\bf{u}} (w_i^\ve - w_i)
        \end{pmatrix}.
    \end{align*}
    This completes the proof.
\end{proof}
%%%%%%%%%%%%%%%%%%%%%%%%%%%%%%%%%%%%%%%%%%%%%%%%%%%%%%
%
%
%
%
%
%
%
%
%%%%%%%%%%%%%%%%%%%%%%%%%%%%%%%%%%%%%%%%%%%%%%%%%%%%%%

\subsection{Estimate of the relative entropy}
We now quantify the evolution of the relative entropy in time. Let us denote
\[
    \calE(U^\ve|U) := \int_{\Omega} \eta(U^\ve|U)\,dx.
\]

We begin with a standard identity, which expresses the evolution of $\calE(U^\ve|U)$ in terms of the relative flux and remainder terms.
\begin{lemma}
The following identity holds:
    \begin{equation} \label{eq: relent}
        \begin{split}
        \calE(U^\ve|U) &= \calE({}^{\rm in}U^\ve|{}^{\rm in}U) + \int_\Omega \eta(U^\ve) - \eta({}^{\rm in}U^\ve)\,dx \\
        &\quad - \int_0^t \int_\Omega \nabla_x D\eta(U): A(U^\ve|U)\,dx ds + \int_0^t \int_\Omega \nabla D\eta(U) \cdot r(U^\ve)\,dx ds.
    \end{split}
    \end{equation}
%    where the constants in the estimates depend only on $\|u\|_{H^s}$, $\|\nabla\rho\|_{L^\infty}$, and the bounds $\lambda$, $\Lambda$.
\end{lemma}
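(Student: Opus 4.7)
The plan is to establish \eqref{eq: relent} as an algebraic identity obtained by differentiating the relative entropy in time and invoking the evolution equations satisfied by $U$ and $U^\ve$. Starting from $\eta(V|U)=\eta(V)-\eta(U)-D\eta(U)(V-U)$, I would decompose
\[
\calE(U^\ve|U)(t) - \calE({}^{\rm in}U^\ve|{}^{\rm in}U) = \int_\Omega\bigl[\eta(U^\ve(t)) - \eta({}^{\rm in}U^\ve)\bigr]dx - \int_\Omega\bigl[\eta(U(t)) - \eta({}^{\rm in}U)\bigr]dx - \int_\Omega\bigl[D\eta(U(t))(U^\ve(t) - U(t)) - D\eta({}^{\rm in}U)({}^{\rm in}U^\ve - {}^{\rm in}U)\bigr]dx.
\]
The first bracket is already present on the right-hand side of \eqref{eq: relent}. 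The second bracket vanishes because the strong solution $U$ of \eqref{eq: 2-Euler} satisfies the entropy identity $\p_t\eta(U) + \nabla_x\cdot G(U) = 0$ pointwise, and the spatial divergence integrates to zero on $\Omega=\T^n$ (or by decay when $\Omega=\R^n$).

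The substantive work is rewriting the third bracket as $-\int_0^t\frac{d}{ds}\int_\Omega D\eta(U)(U^\ve - U)\,dx\,ds$ and expanding by the chain rule. Plugging in $\p_s U = -\nabla_x\cdot A(U)$ from \eqref{eq: 2-Euler} and $\p_s U^\ve + \nabla_x\cdot A(U^\ve) = \nabla_x\cdot r(U^\ve)$ from the moment system derived at the start of this subsection, then integrating by parts on the term carrying the $x$-divergence and splitting $A(U^\ve)-A(U) = A(U^\ve|U) + DA(U)(U^\ve - U)$, produces four contributions. The crucial cancellation is
\[
-\int_\Omega D^2\eta(U)\,\nabla_x\cdot A(U)\cdot (U^\ve - U)\,dx + \int_\Omega \nabla_x D\eta(U):DA(U)(U^\ve - U)\,dx = 0,
\]
which rests on the symmetry of the matrix $D^2\eta(U)\,DA_j(U)$ in its two indices — a standard consequence of $(\eta,G)$ being an entropy–entropy flux pair, equivalent to the compatibility identity $DG_j = D\eta\cdot DA_j$. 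What remains is precisely $\nabla_x D\eta(U):A(U^\ve|U) - \nabla_x D\eta(U)\cdot r(U^\ve)$, which, when carried through the outer minus sign in front of the third bracket and combined with the first bracket, yields \eqref{eq: relent}.

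The main obstacle is structural rather than computational: since $\{f^\ve\}$ is only a weak solution of \eqref{CE main}, the moment equation for $U^\ve$ holds merely in the distributional sense, and one must justify the use of $D\eta(U)$ and $\nabla_x D\eta(U)$ as test functions. This is legitimate here because $U\in L^\infty(0,T^*;H^s)$ with $s>\frac{n}{2}+1$, and the uniform bounds \eqref{bdd_rho} on $n_i$ ensure that $D\eta(U)$ and $\nabla_x D\eta(U)$ are smooth and uniformly bounded on $[0,T^*]\times\Omega$. Boundary terms arising from integration by parts vanish by periodicity when $\Omega=\T^n$, and by the integrability assumptions on the initial data together with the regularity of $(n_1,n_2,{\bf u})$ when $\Omega=\R^n$.
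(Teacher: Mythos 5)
Your proof is correct and takes essentially the same route as the paper's: both differentiate the modulated entropy in time, substitute the two evolution equations, integrate by parts, and split $A(U^\ve)-A(U)$ into the relative flux plus the linear correction $DA(U)(U^\ve-U)$, with the symmetry of $D^2\eta(U)\,DA_j(U)$ and the entropy--entropy flux identity for the strong solution $U$ producing the key cancellations. Your organization into three brackets, together with the explicit pointwise cancellation between $D^2\eta(U)\,\nabla_x\cdot A(U)\cdot(U^\ve-U)$ and $\nabla_x D\eta(U):DA(U)(U^\ve-U)$, merely makes the compatibility condition more visible than the paper's version, which absorbs it into the identity $\int_\Omega\nabla_x D\eta(U):A(U)\,dx=0$ and a citation to KMT15; the underlying ingredients and the treatment of the remainder $r(U^\ve)$ are the same.
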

\begin{proof}
 A direct computation yields  
    \begin{align*}
        \p_t[\eta(U^\ve|U)] &= \p_t(\eta(U^\ve)) - D\eta(U) \p_t U - D^2\eta(U) \p_t U \cdot (U^\ve - U) - D\eta(U)\cdot (\p_t U^\ve - \p_t U) \\
        &= \p_t (\eta(U^\ve)) + D^2\eta(U) (\nabla_x\cdot A(U)) (U^\ve - U) - D\eta(U) \cdot (-\nabla_x\cdot A(U^\ve) + R(U^\ve)).
    \end{align*}
Integrating over $\mathbb{R}^n$, we obtain the evolution identity for the relative entropy
    \begin{align*}
        \frac{d}{dt}\calE(U^\ve|U) &= \int_\Omega \p_t(\eta(U^\ve)) \,dx   + \int_\Omega D^2\eta(U)(\nabla_x\cdot A(U))(U^\ve-U)\,dx\\
        &\quad + \int_\Omega D\eta(U)\cdot \nabla_x A(U^\ve)\,dx - \int_\Omega D\eta(U)\cdot [\nabla_x\cdot r(U^\ve)]\,dx \\
        &=: \sum_{i=1}^4 I_i.
    \end{align*}
    
    We now analyze each of the four terms on the right-hand side. $I_1$ is left unchanged. For $I_2$, we use integration by parts and the chain rule (see, e.g., \cite[Lemma 4.4]{KMT15}) to write        
\[
 I_2 = \int_\Omega \nabla_x D\eta(U) : (DA(U)(U^\ve - U)),
\]
    where
\[
        [DA(U)(U^\ve - U)]_i := D A_i(U) \cdot (U^\ve - U).
\]
    Similarly, we get
    \[
    I_3 = -\int_\Omega \nabla_x D\eta(U) : A(U^\ve)\,dx.
    \]
    Thus the sum of $I_2$ and $I_3$ becomes 
\[
        I_2 + I_3 = -\int_\Omega \nabla_x D\eta(U) : A(U^\ve | U) \,dx,
\]
    where $A(U^\ve | U)$ denotes the relative flux. Here, we have used the classical entropy-entropy flux identity
    \[
    \int_\Omega \nabla_x D\eta(U):A(U)\,dx = 0,
    \] 
which holds for any smooth entropy-entropy flux pair associated to the limiting system (see \cite[Eq. (4.14)]{KMT15}).
    
    Finally, for $I_4$, another integration by parts yields
\[
        I_4 = \int_\Omega \nabla D\eta(U) \cdot r(U^\ve) \,dx.
\]
Combining all the contributions, we conclude the desired identity. 
\end{proof}

As can be seen in the previous lemma, a crucial question is whether we can estimate the relative flux $A(U^\ve|U)$ which arises on the right-hand side of the relative entropy identity. We now provide an estimate that shows this term is controlled by the relative entropy.

\begin{lemma}\label{lem: flux}
Let $1< \gamma_i \leq 2$.    Assume that $(n_1,n_2,u)$ is the strong solution to \eqref{eq: 2-Euler} given in Theorem \ref{thm: relent}. Then for any $t \in [0,T^*]$, the following estimate holds:
\[
        \int_\Omega |A(U^\ve|U)|\,dx \le C_{\lambda,m_1,m_2,\gamma_1,\gamma_2,\Lambda} \calE(U^\ve|U).
\]
\end{lemma}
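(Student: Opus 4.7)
The plan is to decompose the relative flux $A(U^\ve|U)$ row by row. The momentum row $\rho^\ve(\mathbf{u}^\ve - \mathbf{u})(\mathbf{u}_i^\ve - \mathbf{u}_i) + \sum_j(\gamma_j - 1)s_j(n_j^\ve|n_j)e_i$ is dominated pointwise by $\rho^\ve|\mathbf{u}^\ve - \mathbf{u}|^2 + s_1 + s_2 \le 2\eta(U^\ve|U)$, contributing $\ls \calE$ upon integration. For the two mass-flux rows, the algebraic identity $n_1\rho^\ve - n_1^\ve\rho = m_2(n_1 n_2^\ve - n_1^\ve n_2)$ rewrites the first as $\tfrac{m_2(n_1 n_2^\ve - n_1^\ve n_2)}{\rho}(\mathbf{u}_i - \mathbf{u}_i^\ve)$ (and symmetrically for the second); since $\rho \ge (m_1+m_2)\lambda$, the task reduces to showing $\int_\Omega |n_1 n_2^\ve - n_1^\ve n_2||\mathbf{u}^\ve - \mathbf{u}|\,dx \ls \calE$.

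I would split $\Omega = \Omega^+ \cup \Omega^-$ via a threshold $\rho_* := \min(m_1,m_2)\lambda/4$, with $\Omega^+ := \{\rho^\ve \ge \rho_*\}$. On $\Omega^+$, the triangle inequality $|n_1 n_2^\ve - n_1^\ve n_2| \le \Lambda(|n_1^\ve - n_1| + |n_2^\ve - n_2|)$ reduces matters to the pointwise estimate $|n_j^\ve - n_j||\mathbf{u}^\ve - \mathbf{u}| \ls \eta$ for each $j$, which I would establish by subcases on $n_j^\ve$. If $n_j^\ve \le 2\Lambda$, a Taylor expansion of $s_j$ (with $s_j''$ bounded below on $[0,2\Lambda]$ since $1 < \gamma_j \le 2$) gives $|n_j^\ve - n_j|^2 \le C s_j(n_j^\ve|n_j)$, and weighted Young's inequality with parameter $\rho_*$ closes the estimate thanks to $\rho^\ve \ge \rho_*$. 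If $n_j^\ve > 2\Lambda$, then $\rho^\ve \ge m_j n_j^\ve$ is automatically large and $|n_j^\ve - n_j| \le \tfrac{3}{2}n_j^\ve$; applying Young's inequality to the splitting $a = |n_j^\ve - n_j|/\sqrt{n_j^\ve}$, $b = \sqrt{n_j^\ve}|\mathbf{u}^\ve - \mathbf{u}|$ yields $|n_j^\ve - n_j||\mathbf{u}^\ve - \mathbf{u}| \ls n_j^\ve + \rho^\ve|\mathbf{u}^\ve - \mathbf{u}|^2$, and the superlinear entropy bound $(n_j^\ve)^{\gamma_j} \ls s_j$ together with $\gamma_j > 1$ and $n_j^\ve > 2\Lambda$ gives $n_j^\ve \ls s_j$.

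The main obstacle is the low-density region $\Omega^-$, where $\rho^\ve$ may vanish and no pointwise bound of $|\mathbf{u}^\ve - \mathbf{u}|$ by $\eta$ is available. Two observations resolve this. First, the choice of $\rho_*$ forces $n_i^\ve \le \rho^\ve/m_i < \lambda/2$ for both $i$ on $\Omega^-$, hence $|n_i - n_i^\ve| \ge \lambda/2$ and $s_1 + s_2 \ge c_0 > 0$ pointwise, yielding the measure bound $|\Omega^-| \le \calE/c_0$. Second, in this depleted regime the two-species cancellation
\[
|n_1 n_2^\ve - n_1^\ve n_2| \le \Lambda(n_1^\ve + n_2^\ve) \le \Lambda(m_1^{-1} + m_2^{-1})\rho^\ve
\]
holds, so the mass-flux obeys the pointwise estimate $|\text{mass-flux}| \ls \rho^\ve |\mathbf{u}^\ve - \mathbf{u}|$ on $\Omega^-$. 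Cauchy--Schwarz with weight $\rho^\ve$ and $\int \rho^\ve|\mathbf{u}^\ve - \mathbf{u}|^2\,dx \le 2\calE$ yield
\[
\int_{\Omega^-}\rho^\ve|\mathbf{u}^\ve - \mathbf{u}|\,dx \le \Bigl(\int_{\Omega^-}\rho^\ve\,dx\Bigr)^{1/2}\Bigl(\int_{\Omega^-}\rho^\ve|\mathbf{u}^\ve - \mathbf{u}|^2\,dx\Bigr)^{1/2} \le (\rho_*|\Omega^-|)^{1/2}(2\calE)^{1/2} \ls \calE.
\]
Summing all contributions gives the desired bound; the crucial step is identifying the cancellation $|n_1 n_2^\ve - n_1^\ve n_2| \ls \rho^\ve$ in the depleted region, which reflects that both species vanishing forces the cross-term to vanish.
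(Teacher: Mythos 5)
Your proof is correct, and while it shares a key algebraic ingredient with the paper's argument—the factorization $n_1\rho^\ve - n_1^\ve\rho = m_2(n_1 n_2^\ve - n_1^\ve n_2)$ is exactly the step the paper uses to produce its two expansions \eqref{eq: diff1}--\eqref{eq: diff2}—the overall structure is genuinely different. The paper applies H\"older's inequality first, splitting $\int|A|$ into $\sqrt{I}\cdot\sqrt{\int\rho^\ve|u-u^\ve|^2}$ with $I := \int[\tfrac{n_1}{\rho}-\tfrac{n_1^\ve}{\rho^\ve}]^2\rho^\ve\,dx$, and then spends all its effort showing $I\lesssim\calE$ via a case split at the \emph{large} threshold $\rho^\ve\lessgtr\Lambda$. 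You keep the density and velocity factors glued together, split $\Omega$ at a \emph{small} threshold $\rho_*$, and on $\Omega^+$ close with weighted Young; on $\Omega^-$ you avoid the pointwise problem entirely by a measure bound $|\Omega^-|\lesssim\calE/c_0$ (from the uniform entropy floor when $n_i^\ve<\lambda/2<n_i$) combined with Cauchy--Schwarz and the cancellation $|n_1n_2^\ve-n_1^\ve n_2|\lesssim\rho^\ve$. Both arguments ultimately exploit the same fact—when both species are depleted the cross term in the mass-flux row vanishes like $\rho^\ve$—but the paper encodes this through the bounds $m_in_i^\ve/\rho^\ve\le1$ after the H\"older split, whereas you surface it explicitly as a pointwise vanishing and compensate with the measure bound. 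What your route buys is conceptual transparency about where the two-species cancellation enters; what the paper's H\"older-first route buys is that it never needs to mix powers of $\calE$ (no $\calE^{1/2}\cdot\calE^{1/2}$ bookkeeping beyond the single split), so the case analysis is a bit shorter. Both use $\gamma_i\le2$ in the same place, to bound $s_i''$ from below on compact density sets and get the Taylor lower bound $s_i(n_i^\ve|n_i)\gtrsim(n_i^\ve-n_i)^2$.
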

\begin{proof}
The argument follows the strategy of \cite{MV08}, and we provide details for completeness. Note that the first two components of $A(U^\ve|U)$ are symmetric, and the third can be estimated directly using $\mathcal{E}(U^\ve|U)$. We thus focus on estimating the first component.
 
 From H\"older's inequality, we find
\[
 \int_\Omega |[A(U^\ve|U)]^1_i|\,dx   \le \left(\int_\Omega \left[\frac{n_1}{\rho} - \frac{n_1^\ve}{\rho^\ve} \right]^2 \rho^\ve\,dx \right)^{1/2} \left(\int_\Omega \rho^\ve |{\bf{u}}_i - {\bf{u}}_i^\ve|^2\,dx\right)^{1/2}.
\]
    Clearly, the second integral on the right-hand side can be controlled by $\calE(U^\ve|U)$. Thus, we concentrate on estimating the first term, denoted by $I$.
    
    Let us observe that
\[
        \frac{n_1}{\rho} - \frac{n_1^\ve}{\rho^\ve}  
         = \frac{m_{2} n_1 (n_{2}^\ve - n_{2}) }{\rho \rho^\ve } + \frac{m_{2} n_{2} (n_1 - n_1^\ve) }{\rho \rho^\ve}  = \frac{m_2n_2^\ve (n_1 - n_1^\ve)}{\rho \rho^\ve} + \frac{m_2 n_1^\ve (n_2^\ve - n_2) }{\rho \rho^\ve}. 
\]
This leads to two possible upper bounds for $I$:
    \begin{align}
        I &\le \int_\Omega \frac{m_2^2n_1^2(n_2^\ve-n_2)^2}{\rho^2 \rho^\ve} + \frac{m_2^2 n_2^2 (n_1-n_1^\ve)^2}{\rho^2\rho^\ve}\,dx, \label{eq: diff1} \\
        I &\le \int_\Omega \frac{m_2^2 (n_2^\ve)^2 (n_1-n_1^\ve)^2}{\rho^2 \rho^\ve} + \frac{m_2^2(n_1^\ve)^2 (n_2^\ve-n_2)^2}{\rho^2 \rho^\ve}\,dx. \label{eq: diff2}
    \end{align}

 We now distinguish two cases. First, suppose $\rho^\ve < \Lambda$. Then using \eqref{eq: diff2} and the bounds $m_in_i^\ve < \Lambda$ and $m_in_i^\ve / \rho^\ve \le 1$, we obtain
    \begin{align*}
        I &\le \Lambda \int_\Omega \frac{(n_1-n_1^\ve)^2}{\rho^2} \,dx + \frac{m_2^2 \Lambda}{m_1^2} \int_\Omega \frac{(n_2^\ve - n_2)^2}{\rho^2}\,dx \\
        &\le C_{m_1,m_2,\Lambda,\lambda} \int_\Omega (n_1-n_1^\ve)^2 + (n_2^\ve-n_2)^2\,dx \\
        &\le C_{m_1,m_2,\gamma_1,\gamma_2,\Lambda,\lambda} \int_\Omega s_1(n_1^\ve|n_1) + s_2(n_2^\ve|n_2)\,dx,
    \end{align*}
where the final line uses Taylor's theorem and the condition $\gamma_i \le 2$ to deduce
    \begin{align*}
         s_i(n_i^\ve|n_i) &\ge \min\left\{\frac{1}{(n_i^\ve)^{2-\gamma_i}}, \frac{1}{(n_i)^{2-\gamma_i}} \right\} (n_i^\ve - n_i)^2 \ge C_{m_i,\gamma_i,\Lambda} (n_i^\ve - n_i)^2.
    \end{align*}

Next, suppose $\rho^\ve \ge \Lambda$. In this case we apply \eqref{eq: diff1} and use the fact that $m_in_i / \rho \le 1$ to first estimate
\[%    \begin{align}\label{eq: I2}
        I \le C_{m_1,m_2,\Lambda} \int_\Omega \frac{(n_1^\ve-n_1)^2}{\rho^\ve} + \frac{(n_2^\ve-n_2)^2}{\rho^\ve}\,dx.
\]%    \end{align}
In the subcase $n_i^\ve < \Lambda$, we have
\[
        \min\left\{\frac{1}{(n_i^\ve)^{2-\gamma_i}}, \frac{1}{(n_i)^{2-\gamma_i}}\right\} \ge \Lambda^{\gamma_i-2},
\]
and thus
\[
        \int_\Omega \frac{(n_i^\ve-n_i)^2}{\rho^\ve}\,dx \le \frac{1}{\Lambda}\int_\Omega (n_i^\ve - n_i)^2\,dx \le C_{m_i,\gamma_i,\Lambda}\int_\Omega s_i(n_i^\ve|n_i)\,dx.
\]
If instead $n_i^\ve \ge \Lambda$, then $n_i^\ve \ge n_i$, and
\[
        \min\left\{\frac{1}{(n_i^\ve)^{2-\gamma_i}}, \frac{1}{(n_i)^{2-\gamma_i}}\right\} = \frac{1}{(n_i^\ve)^{2-\gamma_i}},
\]
so using $\gamma_i > 1$ and $\rho_i^\ve \ge \Lambda$,
\[
        \int_\Omega \frac{(n_i^\ve-n_i)^2}{\rho^\ve}\,dx \le \int_\Omega \frac{(n_i^\ve-n_i)^2}{m_in_i^\ve}\,dx \le C_{\Lambda,\gamma_i} \int_\Omega \frac{(n_i^\ve-n_i)^2}{m_i (n_i^\ve)^{2-\gamma_i}}\,dx \le C_{\Lambda,\gamma_i,m_i} \int_\Omega s_i(n_i^\ve|n_i)\,dx.
\]

Combining all cases, we conclude that
\[
    I \le C_{m_1, m_2, \gamma_1, \gamma_2, \Lambda} \mathcal{E}(U^\ve|U),
\]
and hence the desired bound follows.
\end{proof}

 %%%%%%%%%%%%%%%%%%%%%%%%%%%%%%%%%%%%%%%%%%%%%%%%%%%%%%
%
%
%
%
%
%
%
%
%%%%%%%%%%%%%%%%%%%%%%%%%%%%%%%%%%%%%%%%%%%%%%%%%%%%%%

\subsection{Proof of Theorem \ref{thm: relent}}
We now combine the estimates established in the previous subsections to complete the proof of Theorem \ref{thm: relent}. Our approach is based on controlling the relative entropy functional via the identity \eqref{eq: relent}, and carefully estimating each term appearing on its right-hand side.
 
Let 
\[
\calE(U^\ve|U) =: J_1 + J_2 + J_3 + J_4, 
\]
where the $J_i$ denote the four terms on the right-hand side of \eqref{eq: relent}.

We begin by estimating $J_1$, which is bounded by $\ve^{1/4}$ due to the assumption on the well-preparedness of the initial data.

For $J_2$, we observe that
\begin{align*}
    \int_\Omega \eta(U^\ve) - \eta({}^{\rm in}U^\ve)\,dx &= \iint_{\Omega \times \R^n} H(\calM_1(f^\ve),\calM_2(f^\ve)) - H(\calM_1({}^{\rm in}f^\ve),\calM_2({}^{\rm in}f^\ve))\,dxdv \\
    &\le \iint_{\Omega \times \R^n} H(f^\ve) - H({}^{\rm in}f^\ve) \,dxdv \\
    &\quad + \iint_{\Omega \times \R^n} H({}^{\rm in}f^\ve) - H(\calM_1({}^{\rm in}f^\ve),\calM_2({}^{\rm in}f^\ve))\,dxdv \\
    &\lesssim \ve^{1/4},
\end{align*}
due to the entropy dissipation property and the well-preparedness of the initial data.

To estimate $J_3$, we use the expression
    \begin{align*}
        D\eta(U) = \begin{pmatrix}
            -\frac{m_1}{2}|{\bf{u}}|^2 + \frac{\gamma_1 m_1^{\frac{n}{2}(\gamma_1 - 1)}}{\gamma_1 - 1}n_1^{\gamma_1-1} \\[2mm]
            -\frac{m_2}{2}|{\bf{u}}|^2 + \frac{\gamma_2 m_2^{\frac{n}{2}(\gamma_2 - 1)}}{\gamma_2 - 1} n_2^{\gamma_2 - 1}\\[2mm]
            {\bf{u}}
        \end{pmatrix}
    \end{align*}
and the bound $\gamma_i \le 2$ to obtain 
    \begin{align*}
        \|\nabla D\eta(U)\|_{L^\infty} \le C(m_1+m_2 + 1)\|{\bf{u}}\|_{H^s}^2 + \frac{\gamma_1 m_1^{\frac{n}{2}(\gamma_1 - 1)}}{\lambda^{2-\gamma_1}} \|\nabla n_1\|_{L^\infty} + \frac{\gamma_2 m_2^{\frac{n}{2}(\gamma_2 - 1)}}{\lambda^{2-\gamma_2}} \|\nabla n_2\|_{L^\infty} \le C.
    \end{align*}
Then, by Lemma \ref{lem: flux}, we deduce
    \begin{align*}
        J_3 \le C \int_0^t \int_\Omega |A(U^\ve|U)|\,dx ds \le C\int_0^t \calE(U^\ve|U)\, ds.
    \end{align*}
    
For $J_4$, we estimate
    \begin{equation} \label{eq: J4}
    \begin{split}
    J_4 &\le \|\nabla D\eta(U)\|_{L^\infty} \int_0^t \int_\Omega \left|\intr r(U^\ve)\,dv\right|\,dxds \\
    &\le C\sum_{i=1}^2 \Bigg[ \int_0^t \iint_{\Omega \times \R^n} |v| |f_i^\ve - \calM_i|\,dvdx ds  + \int_0^t \int_\Omega \left|\intr |v|^2 (\calM_i - f_i^\ve)\,dv\right|dx ds\Bigg] \\
    &\le C \sum_{i=1}^2  \sqrt{2}\|f_i^\ve\|_{L^1((0,t)\times\Omega \times \R^n)}^{1/2} \left(\int_0^t \iint_{\Omega \times \R^n} |v|^2 |f_i^\ve - \calM_i|\,dv dx ds\right)^{1/2}  \\
    &\quad + C \sum_{i=1}^2 \int_0^t \iint_{\Omega \times \R^n} |v|^2 |f_i^\ve - \calM_i|\,dvdxds.
    \end{split}
    \end{equation}
    In the above, the last inequality follows by applying H\"older's inequality and then the triangle inequality, recalling that $\intr f_i^\ve\,dv = \intr \calM_i(f^\ve)\,dv$. Since the kinetic equation conserves mass, we find 
    \[
    \|f_i^\ve\|_{L^1((0,t)\times\Omega \times \R^n)} = t\|{}^{\rm in}f_i^\ve\|_{L^1(\Omega \times \R^n)}. 
    \]
The second moment terms can be estimated by (see \cite[Proposition 4.1]{BV05}, \cite[Lemma 1.4]{KS24})
    \begin{align*}
    \int_0^t \iint_{\Omega \times \R^n} |v|^2 |f_i^\ve - \calM_i|\,dvdxds \le \int_0^t \int_\Omega  ((n_i^{\ve})^{\gamma_i/2 } \sqrt{D_i} + D_i) \,dx ds, 
    \end{align*}
    where 
    \[
    D_i := \intr h_i(f_i^\ve) - h_i(\calM_i(f_1^\ve,f_2^\ve))\,dv \ge 0
    \]
     by Remark \ref{rem: each h}.     Using the uniform bound of $n_i^\ve$ in $L^\infty(0,T;L^{\gamma_i})$, we obtain
    \begin{equation} \label{eq: secmoments}
    \begin{split}
        \int_0^t \iint_{\Omega \times \R^n} |v|^2 |f_i^\ve - \calM_i|\,dvdxds &\le C \int_0^t \left(\int_\Omega D_i\,dx\right)^{1/2}ds + \int_0^t \int_\Omega D_i\,dxds \\
        &\le C_T \left(\int_0^t \int_\Omega D_i\,dxds \right)^{1/2} + \int_0^t \int_\Omega D_i\,dxds.
    \end{split}
    \end{equation}
    
    Combining \eqref{eq: J4} and \eqref{eq: secmoments}, and applying arithmetic geometric inequality $\sqrt{x}+\sqrt{y} \le \sqrt{2(x+y)}$ twice, we deduce
    \begin{align*}
        J_4 &\le C_T \left( 2 \sum_{i=1}^2 \int_0^t \iint_{\Omega \times \R^n} |v|^2 |f_i^\ve - \calM_i|\,dvdxds \right)^{1/2}   + C\sum_{i=1}^2 \int_0^t \iint_{\Omega \times \R^n} |v|^2 |f_i^\ve - \calM_i|\,dvdxds \\
        &\le C_T \left( \sum_{i=1}^2 \left[\left(\int_0^t \int_\Omega D_i\,dxds \right)^{1/2} + \int_0^t \int_\Omega D_i\,dxds \right] \right)^{1/2}  \\
        &\quad + C_T \sum_{i=1}^2 \left[\left(\int_0^t \int_\Omega D_i\,dxds\right)^{1/2} + \int_0^t \int_\Omega D_i\,dxds \right] \\
        &\le C_T \left(\sum_{i=1}^2 \int_0^t \int_\Omega D_i\,dxds \right)^{1/4} + C_T \left(\sum_{i=1}^2 \int_0^t \int_\Omega D_i\,dxds\right)^{1/2} + C_T \int_0^t \int_\Omega D_i\,dxds \\
        &\lesssim_T \ve^{1/4}.
    \end{align*}
 Summing the estimates for $J_1$ through $J_4$, we arrive at
    \begin{align*}
        \calE(U^\ve|U) \le C_T \ve^{1/4} + C\int_0^t \calE(U^\ve|U) \, ds,
    \end{align*}
  from which the desired estimate follows via Gr\"onwall inequality. The strong convergence statements in \eqref{eq: hyd} are then deduced from the decay of the relative entropy, and we refer to \cite{BV05, CCJ21} for the standard arguments.

 %%%%%%%%%%%%%%%%%%%%%%%%%%%%%%%%%%%%%%%%%%%%%%%%%%%%%%%%%%%
%
%
%
%
%
%
%
%
%
%
%
%
%
%%%%%%%%%%%%%%%%%%%%%%%%%%%%%%%%%%%%%%%%%%%%%%%%%%%%%%%%%%%

%%%%%%%%%%%%%%%%%%%%%%%%%%%%%%%%%%%%%%%%%%%%%%%%%%%%%%%%%%%%%%%%%%%%%%%%%%%%%%%%%5
%
%
%                         
%
%
%%%%%%%%%%%%%%%%%%%%%%%%%%%%%%%%%%%%%%%%%%%%%%%%%%%%%%%%%%%%%%%%%%%%%%%%%%%%%%%%%

\section{Numerical scheme and asymptotic-preserving property}\label{sec_numer}
In this section, we describe a class of finite difference numerical methods for the BGK model \eqref{CE main}. In the construction of a numerical method, the main issue is whether we can capture the correct dynamics in the limit $\varepsilon\to 0$. More precisely, in view of the efficiency of numerical methods, we need to handle the time step restriction that arises from the small Knudsen number. One popular approach to address the stiff problem is to use an implicit-explicit time discretization. In detail, the non-stiff convection term is handled explicitly, and hence CFL-type time step restriction remains. However, the stiff relaxation term is treated implicitly, which allows us to avoid time step restriction from small Knudsen numbers. It is notable that this strategy has been widely adopted in numerous works \cite{FJ,ZH, PP}. 

In this work, we adopt implicit-explicit Runge--Kutta (IMEX-RK) methods for time discretization. Butcher's table that corresponds to $s$-stage IMEX-RK time discretization is represented by
\begin{align*}%\label{Butcher}
	\begin{array}{c|c}
		\tilde{c} & \tilde{A}\\
		\hline \\[-3mm]
		& \tilde{b}^\top
	\end{array},\quad \begin{array}{c|c}
		c & A\\
		\hline \\[-3mm]
		& b^\top
	\end{array},
\end{align*}
where $\tilde{A},\,A$ are $s\times s$ real matrices and $\tilde{b}^\top,\,b^\top,\,\tilde{c},\,c$ are real vectors of length $s$. Note that the left table $(\tilde{A},\tilde{b},\tilde{c})$ is for the explicit part and the right one $(A,b,c)$ is for the implicit part. 

Among numerous IMEX-RK methods, when choosing IMEX method, we take into account two aspects: in the limit $\varepsilon\to0$ (I) whether the schemes degenerate into an approximation of the isentropic Euler system and (II) whether the scheme ensures the asymptotic accuracy. 
These two properties are referred to as asymptotic preserving (AP) and asymptotic accurate (AA) properties in the literature \cite{BPR}. In our framework, AP property means that a consistent time discretization of BGK model \eqref{CE main} becomes a consistent time discretization method for the isentropic Euler system \eqref{Euler system} in the limit $\varepsilon\to 0$, and AA property implies that the order of time discretization is maintained in the limit $\varepsilon\to 0$. Note that these properties enable us to ensure the robustness and high accuracy of the numerical methods in the limit $\varepsilon \to 0$. 

With reference to \cite{BPR}, to attain the AP and AA properties, we adopt a $L$-stable second order globally stiffly accurate (GSA) IMEX-RK method called ARS(2,3,2) \cite{ARS}:
	\begin{align}\label{Butcher}
		\tilde{A}=\begin{array}{c|ccc}
			0& 0 &0 & 0\\
			\alpha& \alpha &0 & 0\\
			1 &\delta& 1-\delta & 0\\
			\hline \\[-3mm]
			& \delta& 1-\delta & 0
		\end{array},\quad A=\begin{array}{c|ccc}
			0& 0 &0 & 0\\
			\alpha& 0&\alpha &0 \\
			1 &0& 1-\alpha &\alpha \\
			\hline \\[-3mm]
			& 0&1-\alpha & \alpha
		\end{array},\quad \alpha = 1-\frac{1}{\sqrt{2}},\quad \delta = 1-\frac{1}{2\alpha},
	\end{align}
where the left table applies to the convection term, while the right one applies to the relaxation terms. Note that the adopted IMEX-RK method \eqref{Butcher} is of type CK and ARS. In addition, the method also satisfies the GSA condition, which means that the last row of $\tilde{A}$ and $A$ coincides with the weights $\tilde{b}^\top$ and $b^\top$. This is an important feature to attain AP property. For more details, we refer to a book \cite{BPR}.

Now, we describe general $s$-stage IMEX-RK methods for \eqref{CE main}. For the description of numerical methods, we consider the case when $\nu_i$ is dependent on $n_i$, i.e., there is no dependency on $v$ variable. We begin by discretizing \eqref{CE main} in time with $s$-stage IMEX-RK scheme, i.e., 
\begin{equation}\label{IMEXcomp}
	f_i^{(k)} = f_i^{m} - \Delta t\sum_{\ell=1}^{k-1}\tilde{a}_{k\ell} v \cdot \nabla_x f_i^{(\ell)} + \frac{\Delta t}{\varepsilon}\sum_{\ell=1}^{k}a_{k\ell}\nu_i^{(\ell)}\left(\mathcal{M}_i^{(\ell)} - f_i^{(\ell)}\right), \quad k = 1,\dots,s, \quad i=1,2,
\end{equation}
\begin{equation}\label{solnum}
	f_i^{m+1} = f_i^{m} - \Delta t\sum_{k=1}^{s}\tilde{b}_{k,} v \cdot \nabla_x f_i^{(k)} + \frac{\Delta t}{\varepsilon}\sum_{k=1}^{s}{b}_{k}\nu_i^{(k)}\left(\mathcal{M}_i^{(k)} - f^{(k)}\right),
\end{equation}
where $f_i^{(k)}$ and $f_i^{m+1}$ denote the $k$-th stage value and numerical solution at time $t_{m+1}$, respectively. 
To ensure the stability of numerical solutions and prevent spurious oscillations near discontinuities, we approximate the convection terms $v\cdot \nabla f^{(k)}$  using upwind flux with WENO method \cite{CWS}. Each $k$-th stage value will be computed from $k=1$ to $s$ in order using the following arguments.

By integrating \eqref{IMEXcomp} 
%and \eqref{solnum} 
over velocity variable, we obtain the relation for discrete number densities $n_i^{(k)}$ of species $i$ as follows:
\begin{align}\label{rho k}
	\begin{split}
		n_i^{(k)} &= \int_{\mathbb{R}^{n}}   \left(f_i^{n} - \Delta t\sum_{\ell=1}^{k-1}\tilde{a}_{k\ell} v \cdot \nabla_x f_i^{(\ell)} \right)  dv,\quad i=1,2.
	\end{split}
\end{align}
Note that we can use $n_i^{(k)}$ to obtain the collision frequencies $\nu_i^{(k)}$ explicitly. Next, to compute the discrete bulk velocity $u_i^{(k)}$ of species $i$, we multiply $v$ to \eqref{IMEXcomp} and integrate over $v$:
\begin{align}\label{stage uk}
	\begin{split}
		n_i^{(k)}u_i^{(k)}
		 &= \underbrace{\int_{\mathbb{R}^{n}}   \left(f_i^{n} - \Delta t\sum_{\ell=1}^{k-1}\tilde{a}_{k\ell} v \cdot \nabla_x f_i^{(\ell)}  + \frac{\Delta t}{\varepsilon}\sum_{\ell=1}^{k-1}a_{k\ell}\nu_i^{(\ell)} (\mathcal{M}_i^{(\ell)}-f_i^{(\ell)}) \right) v\,  dv}_{A_i^{(k)}} \cr
		 &\quad + \frac{\Delta t}{\varepsilon}a_{kk}\nu_i^{(k)} (n_i^{(k)}u^{(k)}-n_i^{(k)}u_i^{(k)}),%\cr 
%		 &\hspace{100mm} i=1,2,
	\end{split}
\end{align}
where we introduce $A_i^{(k)}$ ($i=1,2$) for brevity. Then, we can transform \eqref{stage uk} into
\begin{align*}%\label{stage uk 2}
	\begin{split}
		u_1^{(k)} 
		&= \frac{A_1^{(k)}}{n_1^{(k)}}+ \frac{\Delta t}{\varepsilon}a_{kk}\nu_1^{(k)} \left(\frac{\nu_1^{(k)}\rho_1^{(k)}u_1^{(k)} + \nu_2^{(k)}\rho_2^{(k)}u_2^{(k)}}{\nu_1^{(k)}\rho_1^{(k)} + \nu_2^{(k)}\rho_2^{(k)}}-u_1^{(k)}\right),\cr
		u_2^{(k)} 
		&= \frac{A_2^{(k)}}{n_2^{(k)}}+ \frac{\Delta t}{\varepsilon}a_{kk}\nu_2^{(k)} \left(\frac{\nu_1^{(k)}\rho_1^{(k)}u_1^{(k)} + \nu_2^{(k)}\rho_2^{(k)}u_2^{(k)}}{\nu_1^{(k)}\rho_1^{(k)} + \nu_2^{(k)}\rho_2^{(k)}}-u_2^{(k)}\right),
	\end{split}
\end{align*}
which can be rearranged as follows:
\begin{equation}\label{u sys}
	\begin{bmatrix}
		1+ \frac{\Delta t}{\varepsilon}a_{kk}\nu_1^{(k)} \frac{ \nu_2^{(k)}\rho_2^{(k)}}{\nu_1^{(k)}\rho_1^{(k)} + \nu_2^{(k)}\rho_2^{(k)}} & -\frac{\Delta t}{\varepsilon}a_{kk}\nu_1^{(k)} \frac{ \nu_2^{(k)}\rho_2^{(k)}}{\nu_1^{(k)}\rho_1^{(k)} + \nu_2^{(k)}\rho_2^{(k)}}\\
		-\frac{\Delta t}{\varepsilon}a_{kk}\nu_2^{(k)} \frac{ \nu_1^{(k)}\rho_1^{(k)}}{\nu_1^{(k)}\rho_1^{(k)} + \nu_2^{(k)}\rho_2^{(k)}} & 1+\frac{\Delta t}{\varepsilon}a_{kk}\nu_2^{(k)} \frac{ \nu_1^{(k)}\rho_1^{(k)}}{\nu_1^{(k)}\rho_1^{(k)} + \nu_2^{(k)}\rho_2^{(k)}}
	\end{bmatrix}
	\begin{bmatrix}
		u_1^{(k)} \\
		u_2^{(k)}  
	\end{bmatrix}=	\begin{bmatrix}
		\frac{A_1}{n_1^{(k)}} \\
		\frac{A_2}{n_2^{(k)}}  
	\end{bmatrix}.
\end{equation}
This linear system is solvable if $a_{kk}\neq 0$ and $n_i^{(k)}>0$ for $i=1,2$. 
To sum up, the relations \eqref{rho k} and the solvability of the system \eqref{u sys} allow us to compute the discrete common bulk velocity $u^{(k)}$ explicitly:
\begin{align*}
	u^{(k)}= \frac{\nu_1^{(k)}\rho_1^{(k)}u_1^{(k)} + \nu_2^{(k)}\rho_2^{(k)}u_2^{(k)}}{\nu_1^{(k)}\rho_1^{(k)} + \nu_2^{(k)}\rho_2^{(k)}}.
\end{align*}
This further implies that $\mathcal{M}_i^{(k)}:=\mathcal{M}_i[n_i^{(k)},u^{(k)}](v)$ can be obtained without resorting to any Newton solver. Thus, we can update stage values as follows:
\begin{equation*}%\label{num k}
	f_i^{(k)} = \frac{\varepsilon}{\varepsilon + \Delta t a_{kk} \nu_i^{(k)}} f_{i,*}^{(k)} + \frac{\Delta t  a_{kk}\nu_i^{(k)}}{\varepsilon + \Delta t a_{kk}\nu_i^{(k)}}\mathcal{M}_i^{(k)}, 
\end{equation*}
where
$$
f_{i,*}^{(k)} = f_{i}^{m} - \Delta t\sum_{\ell=1}^{k-1}\tilde{a}_{k\ell} v \cdot \nabla_x f_{i}^{(\ell)} + \frac{\Delta t}{\varepsilon}\sum_{\ell=1}^{k-1}a_{k\ell} \nu_i^{(\ell)}\left(\mathcal{M}_i^{(\ell)} - f_i^{(\ell)}\right).
$$
%%%%%%%%%%%%%%%%%%%%%%%%%% 
Finally, using the stage values, we can update the numerical solution at time $t_{n+1}$ reads
\begin{align*}%\label{num sol}
\begin{split}
	f_i^{m+1} &= f_i^{m} - \Delta t\sum_{k=1}^{s}\tilde{b}_{k} v \cdot \nabla_x f_i^{(k)} + \frac{\Delta t}{\varepsilon}\sum_{k=1}^{s}{b}_{k}\nu_i^{(k)}\left(\mathcal{M}_i^{(k)} - f_i^{(k)}\right).
\end{split}
\end{align*}
If the method is GSA, one can obtain a numerical solution immediately, i.e., $f_i^{m+1}=f_i^{(s)}$.

We next investigate the behavior of numerical solutions in the limit $\varepsilon\to 0$. The following proposition shows that the proposed IMEX-RK method degenerates into the limiting isentropic Euler system.
\begin{proposition}\label{prop}
	Assume that a $s$-stage IMEX-RK method of type CK and GSA is considered in the proposed method \eqref{IMEXcomp} and \eqref{solnum} for BGK model \eqref{CE main}.
%	given by \eqref{Butcher}. 
	Then, for a well-prepared initial
	data, i.e., $\lim\limits_{\varepsilon \to 0} f_i^m=\mathcal{M}_i[n_i^m,\mathbf{u}^m]$, and nonzero relaxation term $\nu_i^{(k)}\neq 0$ $(i=1,2,\, k=1,\dots,s)$, in the limit $\varepsilon \to 0$ the numerical method becomes the explicit RK scheme characterized by the pair $(\tilde{A},\tilde{b})$ applied to the limit isentropic Euler system (\ref{Euler system}). 
\end{proposition}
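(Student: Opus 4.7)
The plan is to pass to the limit $\varepsilon \to 0$ by induction on the stage index $k = 1,\ldots,s$, exploiting the type CK and GSA structure of the implicit tableau together with the well-preparedness of the initial data. The key algebraic observation is that each implicit stage in \eqref{IMEXcomp} can be rewritten as
\[
f_i^{(k)} \;=\; \frac{\varepsilon}{\varepsilon + \Delta t\, a_{kk}\nu_i^{(k)}}\, f_{i,*}^{(k)} \;+\; \frac{\Delta t\, a_{kk}\nu_i^{(k)}}{\varepsilon + \Delta t\, a_{kk}\nu_i^{(k)}}\, \mathcal{M}_i^{(k)},
\]
so that under the standing hypothesis $\nu_i^{(k)} \neq 0$, and with $a_{kk} \neq 0$ (which holds for $k \geq 2$ in a type CK method), letting $\varepsilon \to 0$ forces $f_i^{(k)} = \mathcal{M}_i^{(k)}$. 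For the first stage, type CK enforces $a_{11}=0$, so no relaxation correction is applied, and well-preparedness directly yields $f_i^{(1)} = f_i^m = \mathcal{M}_i[n_i^m,\mathbf{u}^m]$ in the limit.

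The induction step reads as follows: assuming in the limit that $f_i^{(\ell)} = \mathcal{M}_i[n_i^{(\ell)},\mathbf{u}^{(\ell)}]$ for all $\ell < k$, and that $(n_i^{(\ell)},\mathbf{u}^{(\ell)})$ coincides with the explicit RK stage generated by $(\tilde A,\tilde b)$ applied to the Euler system \eqref{Euler system}, we propagate these properties to stage $k$. Integrating \eqref{IMEXcomp} against $1$ annihilates the relaxation term by the species-wise identity $\int_{\R^n}\mathcal{M}_i^{(\ell)}\,dv = n_i^{(\ell)} = \int_{\R^n} f_i^{(\ell)}\,dv$, leaving
\[
n_i^{(k)} \;=\; n_i^m \;-\; \Delta t \sum_{\ell=1}^{k-1}\tilde a_{k\ell}\,\nabla_x\!\cdot\!\int_{\R^n} v\, f_i^{(\ell)}\,dv,
\]
which under the induction hypothesis reduces to the explicit RK stage for $\partial_t n_i + \nabla_x\!\cdot\!(n_i\mathbf{u}) = 0$, since $\int_{\R^n} v\,\mathcal{M}_i^{(\ell)}\,dv = n_i^{(\ell)}\mathbf{u}^{(\ell)}$. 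Multiplying \eqref{IMEXcomp} by $m_i v$, integrating, and summing over $i = 1,2$, the implicit terms corresponding to every index $\ell$ all cancel owing to the defining identity $\sum_i \nu_i^{(\ell)} \rho_i^{(\ell)}\,(u^{(\ell)} - u_i^{(\ell)}) = 0$ of the common velocity. Evaluating the remaining flux via the Maxwellian moment identity $\int_{\R^n} m_i\, v \otimes v\, \mathcal{M}_i^{(\ell)}\,dv = \rho_i^{(\ell)}\,\mathbf{u}^{(\ell)} \otimes \mathbf{u}^{(\ell)} + m_i^{\frac{n}{2}(\gamma_i-1)} (n_i^{(\ell)})^{\gamma_i}\,\mathbb{I}_{n\times n}$ from Lemma \ref{lem_ident}, one recovers exactly the explicit RK stage for the momentum equation in \eqref{Euler system} with pressure $p_1 + p_2$.

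Once the induction closes, the GSA hypothesis (last rows of $\tilde A$ and $A$ coincide with $\tilde b^\top$ and $b^\top$) identifies the update \eqref{solnum} with $f_i^{(s)}$, so no separate analysis of the assembly step is required and the limit numerical scheme is precisely the explicit RK method $(\tilde A,\tilde b)$ applied to \eqref{Euler system}. The main technical obstacle lies in the stage-level bookkeeping: one must verify carefully that the implicit sums indexed by $\ell < k$ inside $f_{i,*}^{(k)}$ also vanish under the $1$ and $m_i v$ moments, which again follows from the species-wise mass conservation and the common-velocity cancellation used above. Beyond this, the argument is a clean induction on $k$ and requires no compactness, passage to subsequences, or regularity estimates beyond what is built into the scheme itself.
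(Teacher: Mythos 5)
Your proof is essentially correct and takes the same underlying route as the paper, but with a different organization: you resolve the implicit stages one at a time via the explicit formula
\[
f_i^{(k)} = \frac{\varepsilon}{\varepsilon + \Delta t\, a_{kk}\nu_i^{(k)}} f_{i,*}^{(k)} + \frac{\Delta t\, a_{kk}\nu_i^{(k)}}{\varepsilon + \Delta t\, a_{kk}\nu_i^{(k)}}\mathcal{M}_i^{(k)},
\]
and run an induction on $k$, whereas the paper packages all stages into the vector equation $\mathbf{f}_i = f_i^m\mathbf{e}_s - \Delta t\,\tilde{A}\,v\cdot\nabla_x\mathbf{f}_i + \frac{\Delta t}{\varepsilon}A\boldsymbol{\nu}_i(\boldsymbol{\mathcal{M}}_i - \mathbf{f}_i)$, sends $\varepsilon\to 0$, and invokes the non-singularity of $\hat A$ to conclude $\hat{\mathbf{f}}_i = \hat{\boldsymbol{\mathcal{M}}}_i$ in one stroke. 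These are equivalent: since the implicit tableau is lower-triangular (DIRK), non-singularity of $\hat A$ is exactly $a_{kk}\neq 0$ for $k\geq 2$, which is what your stage-by-stage argument needs. Your induction also implicitly requires what the vectorized argument makes explicit, namely that the stiff terms at levels $\ell<k$ stay bounded after multiplication by $\Delta t/\varepsilon$; you handle this correctly by noting that, under the induction hypothesis $\mathcal{M}_i^{(\ell)}-f_i^{(\ell)}\to 0$, the inherited sum $\frac{\Delta t}{\varepsilon+\Delta t\,a_{kk}\nu_i^{(k)}}\sum_{\ell<k}a_{k\ell}\nu_i^{(\ell)}(\mathcal{M}_i^{(\ell)}-f_i^{(\ell)})$ vanishes in the limit. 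One small bookkeeping point your write-up skips but the paper spells out: to read off the Euler fluxes from $\int m_i\, v\otimes v\,\mathcal{M}_i^{(\ell)}\,dv$ you need to identify the Maxwellian parameter $u^{(\ell)}$ with the mixture velocity $\mathbf{u}^{(\ell)}$; this follows from $\hat{\mathbf{f}}_i=\hat{\boldsymbol{\mathcal{M}}}_i$, which forces $u_1^{(\ell)}=u_2^{(\ell)}=u^{(\ell)}$, hence $\mathbf{u}^{(\ell)}=u^{(\ell)}$ for $\ell\geq 2$. With that identification made explicit, your argument is complete, and in return it avoids introducing the compact tensor/matrix notation of the paper.
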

\begin{proof}
 For brevity of proof, we begin by introducing vector notations:
\begin{align*}
	&{\bf{\nu}}_i=diag(\nu_i^{(1)},\hat{\bf{\nu}}_i),\quad {\bf{\mathcal{M}}}_i=\left(\mathcal{M}_i^{(1)},\hat{\bf{\mathcal{M}}}_i\right)^\top,\quad{\bf{f}}_i=(f_i^{(1)},\hat{\bf{f}}_i)^\top,\cr
	&\hat{\bf{\nu}}_i=diag(\nu_i^{(2)},\dots,\nu_i^{(s)}),\quad \hat{\bf{\mathcal{M}}}_i=\left(\mathcal{M}_i^{(2)},\dots,\mathcal{M}_i^{(s)}\right)^\top,\quad\hat{\bf{f}}_i=(f_i^{(2)},\dots,f_i^{(s)})^\top,\quad i=1,2.
\end{align*}
Using this notation, the stage values \eqref{IMEXcomp} can be rewritten as follows:
\begin{align}\label{num form}
	\begin{split}
		{\bf{f}}_i &= f_i^{m}{\textbf{e}_{s}} - \Delta t \tilde{A} v \cdot \nabla_x {\bf{f}}_i + \frac{\Delta t}{\varepsilon}A{{\bf {\nu}}}_i\left({\bf{\mathcal{M}}}_i - {\bf{f}}_i\right), \quad i=1,2,
	\end{split}
\end{align}
where $\tilde{A}$ and $A$ are the explicit and implicit parts of an IMEX-RK method of type CK and ${\bf{e}}_{s}=(1,\dots,1)^\top \in \mathbb{R}^{s}$. 
Taking the zeroth moment $\langle \cdot \rangle$ on both sides of \eqref{num form}, we derive the equation for number densities:
\begin{align}\label{num n form}
	\begin{split}
		\langle{\bf{f}}_i \rangle &= 	\langle f_i^{m}\rangle {\textbf{e}_{s}} - \Delta t \tilde{A} 	\langle v \cdot \nabla_x {\bf{f}}_i \rangle.
	\end{split}
\end{align}
Similarly, we take the first moment $\langle \cdot m_i v \rangle$ on both sides of \eqref{num form} and sum up the equations for $i=1,2$ to derive the equation for bulk velocity of the mixture:
\begin{align}\label{num u form}
	\begin{split}
		\sum_i \langle{m_i v\bf{f}}_i \rangle &= \sum_i \langle m_i  vf_i^{m}\rangle{\textbf{e}_{s}} - \Delta t \tilde{A} \langle m_i vv \cdot \nabla_x {\bf{f}}_i \rangle + \frac{\Delta t}{\varepsilon}A{{\bf {\nu}}}_i\left\langle {m_i v\bf{\mathcal{M}}}_i -m_iv {\bf{f}}_i\right\rangle,\cr
		&= \sum_i \langle m_i  vf_i^{m}\rangle{\textbf{e}_{s}} - \Delta t \tilde{A} \langle m_i vv \cdot \nabla_x {\bf{f}}_i \rangle,
	\end{split}
\end{align}
where the second equality holds due to the definition of common velocity.

Next, we pass the limit $\varepsilon\to 0$, then, by the assumption that the initial data is well-prepared, we have 
\begin{align}\label{compact form 1}
	&f_i^{(1)}=\mathcal{M}_i[n_i^m,\mathbf{u}^m],\quad i=1,2.
\end{align}
This further gives
\begin{align*}%\label{moment form}
	\begin{split}
		n_i^{(1)} &=n_i^m,\quad n_i^{(1)}u_i^{(1)}=n_i^m \mathbf{u}^m,\quad i=1,2,
	\end{split}
\end{align*}
and
\[
\rho^{(1)}\mathbf{u}^{(1)}=m_1n_1^{(1)}u_1^{(1)}+m_2n_2^{(1)}u_2^{(1)}=\rho^m \mathbf{u}^m.
\]
In a similar manner, in the limit $\varepsilon\to 0$, we derive from the second to last rows of \eqref{num form} that
\begin{align*}
	\hat{A}\hat{\bf{\nu}}_i(\hat{\bf{\mathcal{M}}}_i-\hat{\bf{f}}_i)={\bf{0}}_{s-1},\quad i=1,2,
\end{align*}
where $\hat{A}\in \mathbb{R}^{s-1 \times s-1}$ is a submatrix of $A$, i.e., $A=\begin{bmatrix}
	0 &\bf{0}_{s-1}^\top\\
\bf{0}_{s-1}&\hat{A}
\end{bmatrix}$ and ${\bf{0}}_{s-1}=(0,\dots,0)^\top \in \mathbb{R}^{s-1}$.
Since the submatrix $\hat{A}$ in the IMEX-RK methods of type CK is non-singular and $\nu_i^{(k)}\neq 0$ is assumed for $k=2,\dots,s$, we further have
\begin{align}\label{compact form 2}
	{\hat{\bf{f}}_i=\hat{\bf{\mathcal{M}}}_i} \quad i=1,2.
\end{align}
Taking the moment with respect to $v$ in this equation, we have $u_i^{(k)}=u^{(k)}$ for $k=2,\dots,s$. This further implies that 
$$\textbf{u}^{(k)}=\frac{\rho_1^{(k)} u_1^{(k)} + \rho_2^{(k)} u_2^{(k)}}{\rho^{(k)}}=u^{(k)},\quad k=2,\dots,s.$$
Now, we insert \eqref{compact form 1} and \eqref{compact form 2} into \eqref{num n form} and \eqref{num u form} to derive
\begin{align*}%\label{moment form}
	\begin{split}
		n_i^{(1)} &=n_i^m,\cr
		n_i^{(k)} &= n_i^{m} - \Delta t\sum_{\ell=1}^{k-1}\tilde{a}_{k\ell}  \nabla_x \cdot (n_i^{(\ell)} \mathbf{u}^{(\ell)}),\quad k=2,\dots,s, \quad i=1,2,\cr
			\rho^{(1)}\mathbf{u}^{(1)} &=\rho^m \mathbf{u}^m,\cr
		\rho^{(k)} \mathbf{u}^{(k)} &= \rho^{m} \mathbf{u}^{m} - \Delta t\sum_{\ell=1}^{k-1}\tilde{a}_{k\ell}  \left[\nabla_x \cdot \left(\rho^{(\ell)} \mathbf{u}^{(\ell)} \otimes \mathbf{u}^{(\ell)} \right)+ \nabla_x\left( m_1^{\frac{n}{2}(\gamma_1 - 1)} n_1^{\gamma_1}+ m_2^{\frac{n}{2}(\gamma_2 - 1)} n_2^{\gamma_2}\right)\right],\quad k=2,\dots,s.
	\end{split}
\end{align*}
This is the explicit RK scheme characterized by $(\tilde{A}, \tilde{b})$ applied to the isentropic Euler equations \eqref{Euler system}. Note that the scheme is GSA, and hence we have $f_i^{m+1}=\mathcal{M}_i^{m+1}$ for each $i=1,2$. This again becomes the well-prepared initial data for the next time step.
\end{proof}
%%%%%%%%%%%%%%%%%%%%%%%%%%%%%%%%%%%%%%%%%%%%%%%%%%%%%%
%
%
%
%
%
%
%
%
%
%
%
%
%%%%%%%%%%%%%%%%%%%%%%%%%%%%%%%%%%%%%%%%%%%%%%%%%%%%%%
\section{Numerical tests: Comparison with two-phase Euler flow}\label{sec_numer2}
In this section, our goal is to show that our proposed method is asymptotic preserving in the limit $\varepsilon\to 0$, i.e.,  we will show that numerical solutions to the BGK model and its isentropic Euler system are consistent for small values of $\varepsilon$. To verify this, we focus on the comparison of two equations: (I) the BGK model with one dimension in space and velocity \eqref{CE main}, (II) the isentropic Euler system in one dimension \eqref{Euler system}. In the BGK model, since our interest lies in the scale of the Knudsen number, we simply set $\nu_i=1$ for $i=1,2$. Moreover, for both models, we set $m_i=1$ for $i=1,2$ to clarify the effect of values of $\gamma_i$. Under this assumption, we have $\rho_i=n_i$ for $i=1,2$. For numerical tests, we use the following uniform grids:
\begin{align*}
	t_m&=m\Delta t,\quad m=0,1,2,\dots,\cr
	x_i&=x_L + (i+1/2)\Delta x,\quad i=0,\dots,N_x-1,\cr 
	v_j&=v_{min}+j\Delta v,\quad j=0,\dots,N_v,
\end{align*}
where the spatial and velocity domains are $[x_L,x_R]$ and $[v_{min},v_{max}]$, respectively. The size of grids for $x$ and $v$ are denoted by $\Delta x$ and $\Delta v$, and the time step $\Delta t$ is determined according to CFL number defined by
$$\text{CFL}=\max_j |v_j|\frac{\Delta t}{\Delta x}.$$ 
We compute numerical solutions to the BGK model with the proposed second-order IMEX-RK method \eqref{Butcher} with classical WENO23. Note that we use CFL$=0.4$ and $N_x=200$ on the spatial domain $x\in [-1,1]$. For solving the isentropic Euler system, we use the explicit part of the IMEX-RK method in \eqref{Butcher} for time discretization, while we adopt Lax-Friedrichs flux and WENO23 to treat numerical fluxes.

%%%%%%%%%%%%%%%%%%%%%%%%%%%%%%%%%%%%%%%%%%%%%%%%%%%%%%
%
%
%
%
%
%
%
%
%
%
%
%
%%%%%%%%%%%%%%%%%%%%%%%%%%%%%%%%%%%%%%%%%%%%%%%%%%%%%%
\subsection{Riemann problem I}
In this example, we show that the proposed scheme is asymptotically preserving in the limit $\varepsilon\to 0$. For this aim, we consider a Riemann problem with initial macroscopic variables:
\begin{align*}
	\rho_{10}(x)=\begin{cases}
		1,\quad x\leq 0\\
		0.5\quad x>0
	\end{cases},\quad 	\rho_{20}(x)=\begin{cases}
		0.8,\quad x\leq 0\\
		0.25\quad x>0
	\end{cases}.
\end{align*}
The equilibrium attractors are taken as initial distribution functions. For boundaries on the spatial domain, we assume the free-flow boundary condition. For each value of $\varepsilon\in \{10^{-2},\,10^{-4},\,10^{-6}\}$, we compute numerical solutions up to $t=0.25$. For the comparison, we set $\gamma_1=2,3$ and $\gamma_2=7/5$ 
%consider different values of $\gamma_1,\gamma_2\in\{7/5,5/3,2\}$, 
and take $N_v=1000$ to resolve the shape of the distribution function on the velocity domain $[-3,3]$.
\begin{figure}[htbp]
	\centering
	\begin{subfigure}[h]{0.49\linewidth}
		\includegraphics[width=1\linewidth]{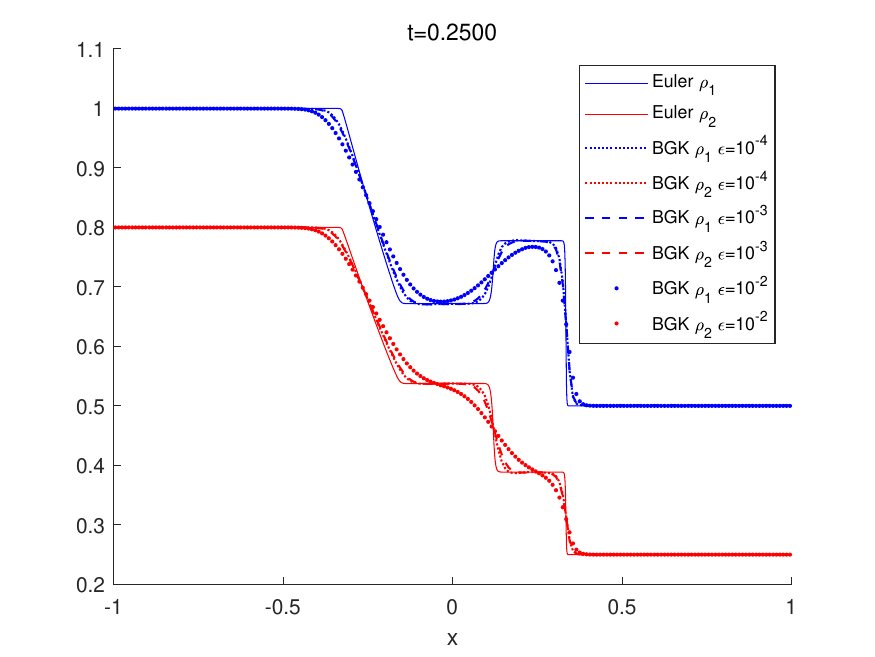}
		%		\subcaption{$n_1$ and $n_2$}
	\end{subfigure}	
	\begin{subfigure}[h]{0.49\linewidth}
		\includegraphics[width=1\linewidth]{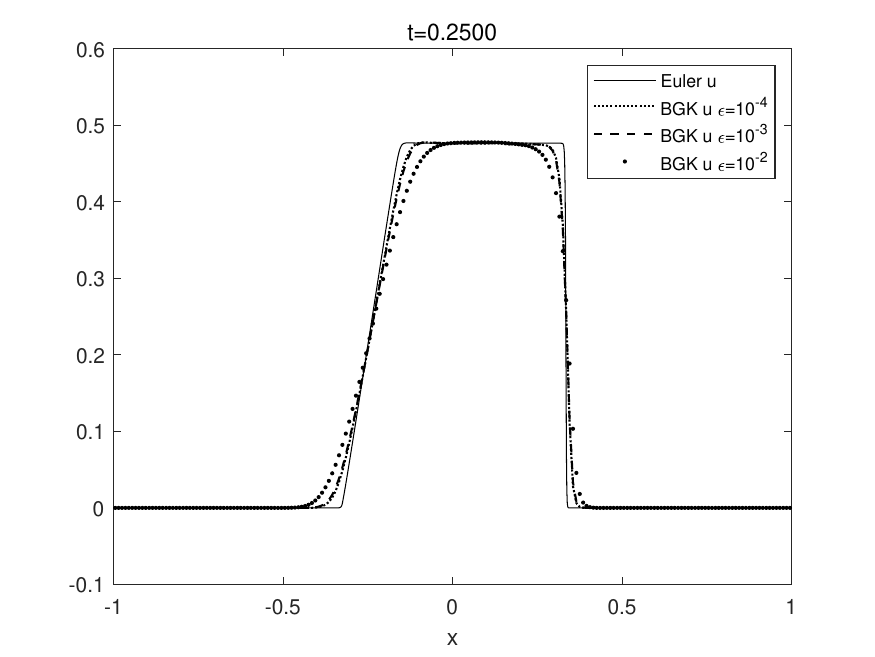}
		%		\subcaption{$u$}
	\end{subfigure}	
	\caption*{Case 1: $\gamma_1=2$ and $\gamma_2=7/5$.}
	\begin{subfigure}[h]{0.49\linewidth}
		\includegraphics[width=1\linewidth]{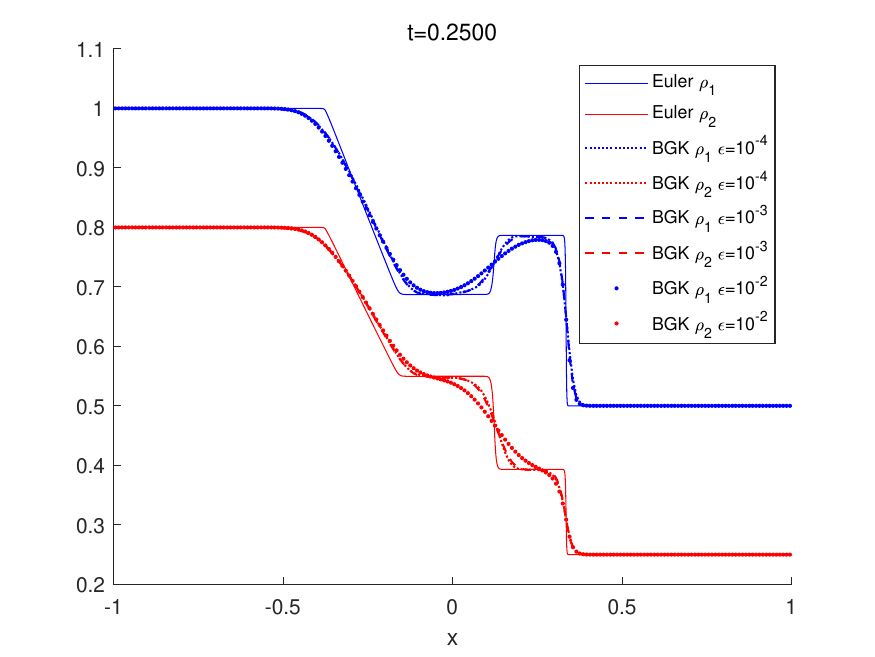}
		%		\subcaption{$n_1$ and $n_2$}
	\end{subfigure}	
	\begin{subfigure}[h]{0.49\linewidth}
		\includegraphics[width=1\linewidth]{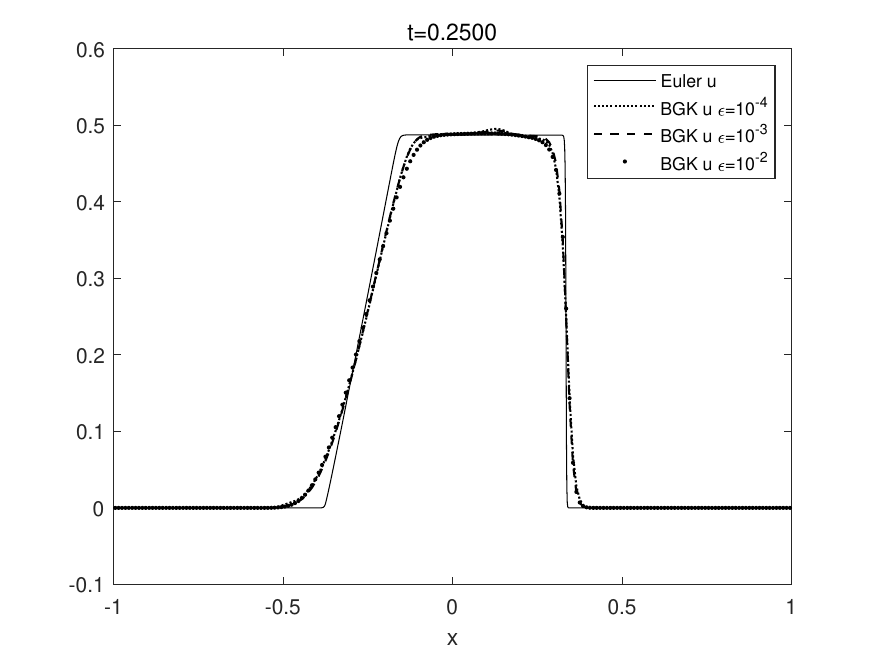}
		%		\subcaption{$u$}
	\end{subfigure}	
	\caption*{Case 2: $\gamma_1=3$ and $\gamma_2=5/3$.}
%	\begin{subfigure}[h]{0.49\linewidth}
%		\includegraphics[width=1\linewidth]{figures/ref_gamma1_1.40_gamma2_2.00_rho_overlap}
%		%	\subcaption{$n_1$ and $n_2$}
%	\end{subfigure}	
%	\begin{subfigure}[h]{0.49\linewidth}
%		\includegraphics[width=1\linewidth]{figures/ref_gamma1_1.40_gamma2_2.00_u_overlap}
%		%	\subcaption{$u$}
%	\end{subfigure}	
%	\caption*{Case 3: $\gamma_1=7/5$ and $\gamma_2=2$}
	\caption{Riemann Problem I: Comparison of species densities (left panels) and bulk velocity of mixtures (right panels).}\label{fig1}
\end{figure}
In Figure \ref{fig1}, we plot the species densities $\rho_1(=n_1)$, $\rho_2(=n_2)$ and the bulk velocity $u$ at time $t=0.25$. We observe that the macroscopic quantities associated with numerical solutions to the BGK model converge to the solution to the isentropic Euler system as $\varepsilon$ goes to zero. These numerical evidences support the theoretical results in Theorem \ref{thm:formal} and Proposition \ref{prop}.

%%%%%%%%%%%%%%%%%%%%%%%%%%%%%%%%%%%%%%%%%%%%%%%%%%%%%%
%
%
%
%
%
%
%
%
%
%
%
%
%%%%%%%%%%%%%%%%%%%%%%%%%%%%%%%%%%%%%%%%%%%%%%%%%%%%%%
\subsection{Riemann problem II}
To observe the effect of choices of $\gamma_1$ and $\gamma_2$, we consider the following initial macroscopic variables:
\begin{align*}
	\rho_{10}(x)=\begin{cases}
		\rho_L,\quad x\leq 0\\
		0.5.\quad x>0
	\end{cases},\quad 	\rho_{20}(x)=\begin{cases}
		1,\quad x\leq 0\\
		0.25\quad x>0
	\end{cases},
\end{align*}
where $\rho_L \in \{1,2,3\}$. As in the previous test, we take the equilibrium attractors as initial data and compare the numerical solutions at $t=0.25$, imposing the free-flow boundary condition on the spatial domain. Here, we set $\gamma_1=2,3$ and $\gamma_2=7/5$ with a fixed Knudsen number $\varepsilon=10^{-6}$. To resolve the distribution function correctly, we enlarge the velocity domain depending on $\rho_L$ and use sufficiently many velocity grids. We report the associated velocity domain and the size of $\Delta v$ in Figures \ref{fig2 2}-\ref{fig2 3}. 
\begin{figure}[htbp]
	\centering
	\begin{subfigure}[h]{0.49\linewidth}
		\includegraphics[width=1\linewidth]{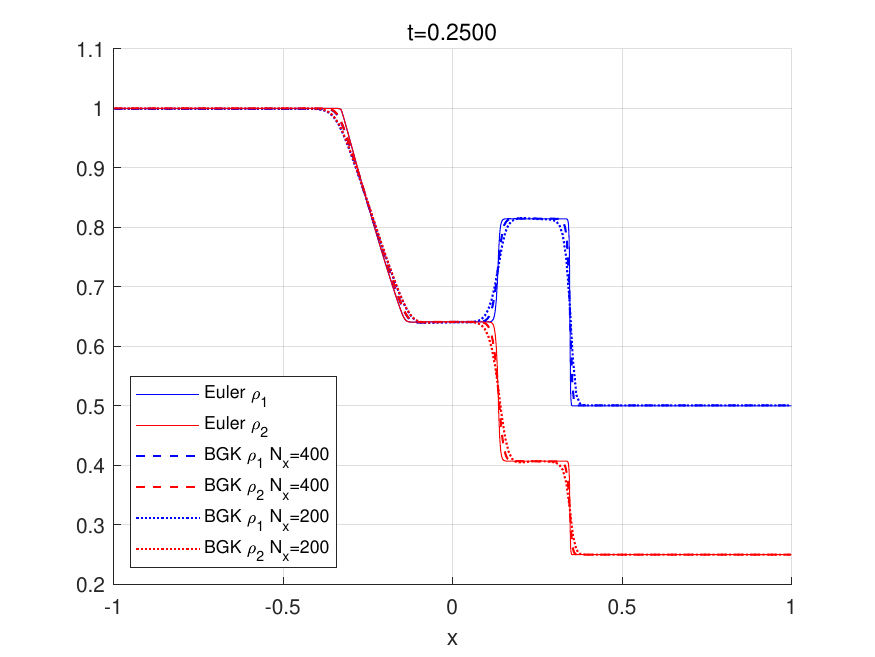}
		%		\subcaption{$n_1$ and $n_2$}
	\end{subfigure}	
	\begin{subfigure}[h]{0.49\linewidth}
		\includegraphics[width=1\linewidth]{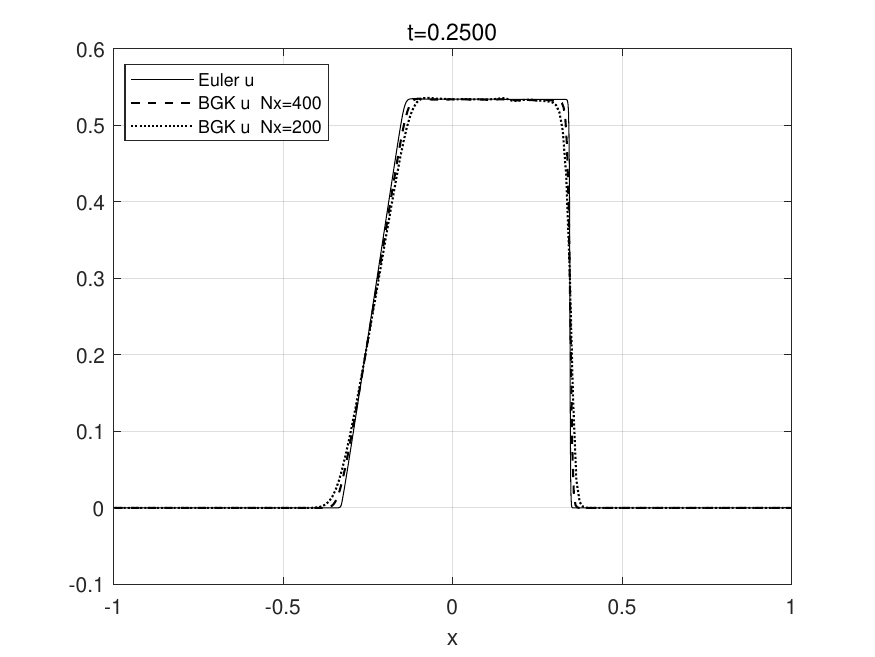}
		%		\subcaption{$u$}
	\end{subfigure}	
	\caption*{Case 1: $\rho_L=1$, $v\in[-3,3]$, $N_v=3000$.}
		\begin{subfigure}[h]{0.49\linewidth}
		\includegraphics[width=1\linewidth]{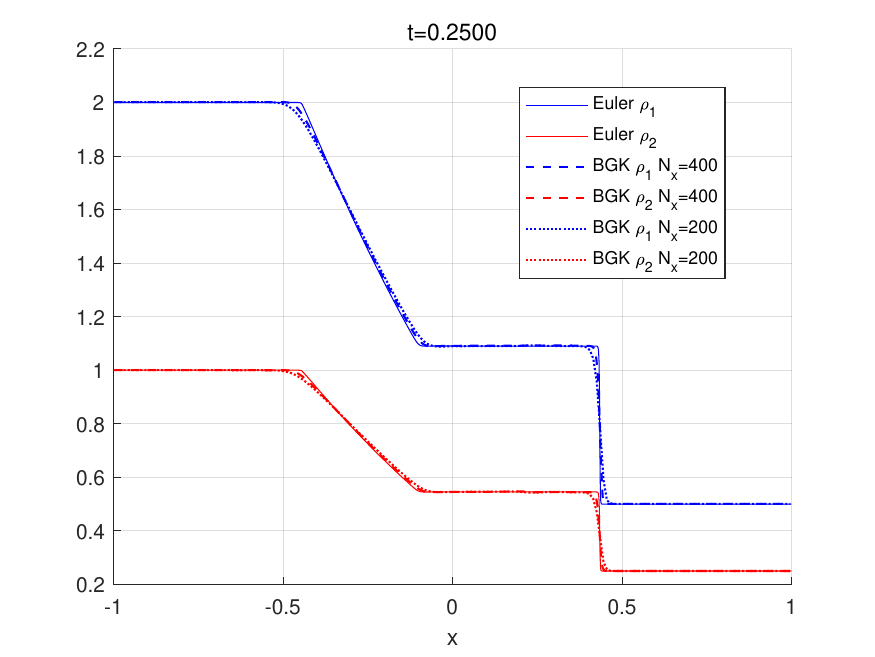}
		%		\subcaption{$n_1$ and $n_2$}
	\end{subfigure}	
	\begin{subfigure}[h]{0.49\linewidth}
		\includegraphics[width=1\linewidth]{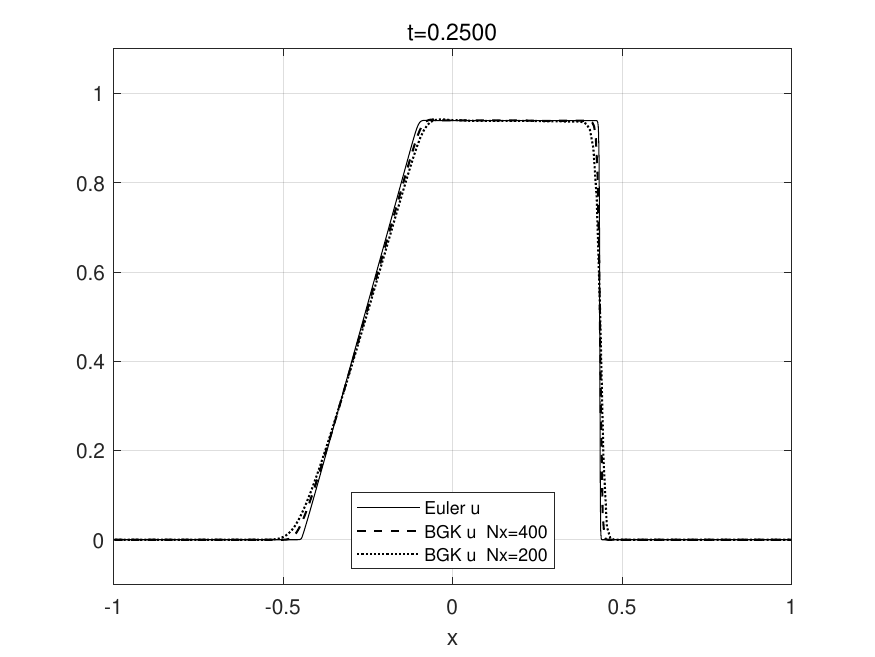}
		%		\subcaption{$u$}
	\end{subfigure}	
	\caption*{Case 2: $\rho_L=2$, $v\in[-5,5]$, $N_v=5000$.}
		\begin{subfigure}[h]{0.49\linewidth}
		\includegraphics[width=1\linewidth]{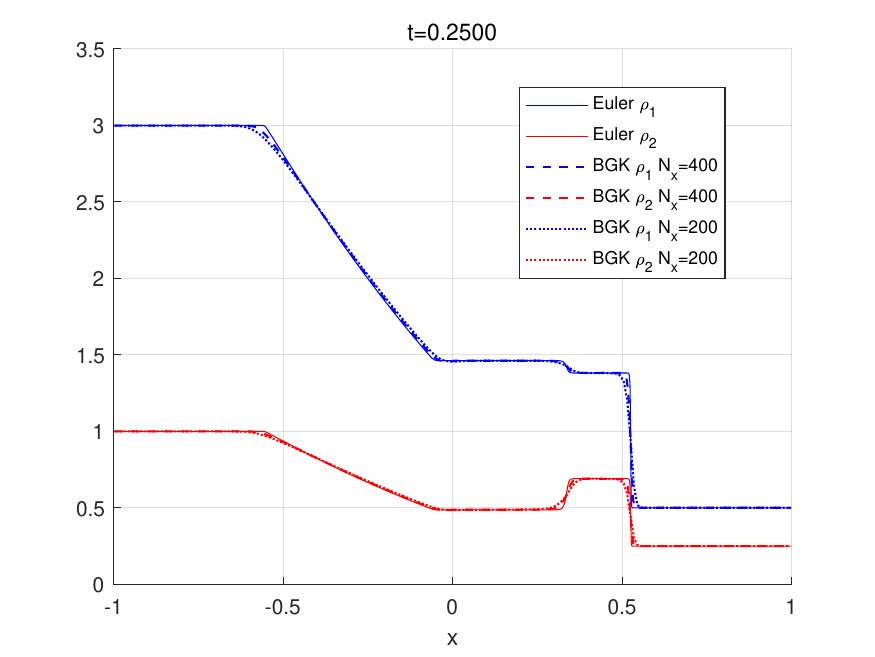}
		%		\subcaption{$n_1$ and $n_2$}
	\end{subfigure}	
	\begin{subfigure}[h]{0.49\linewidth}
		\includegraphics[width=1\linewidth]{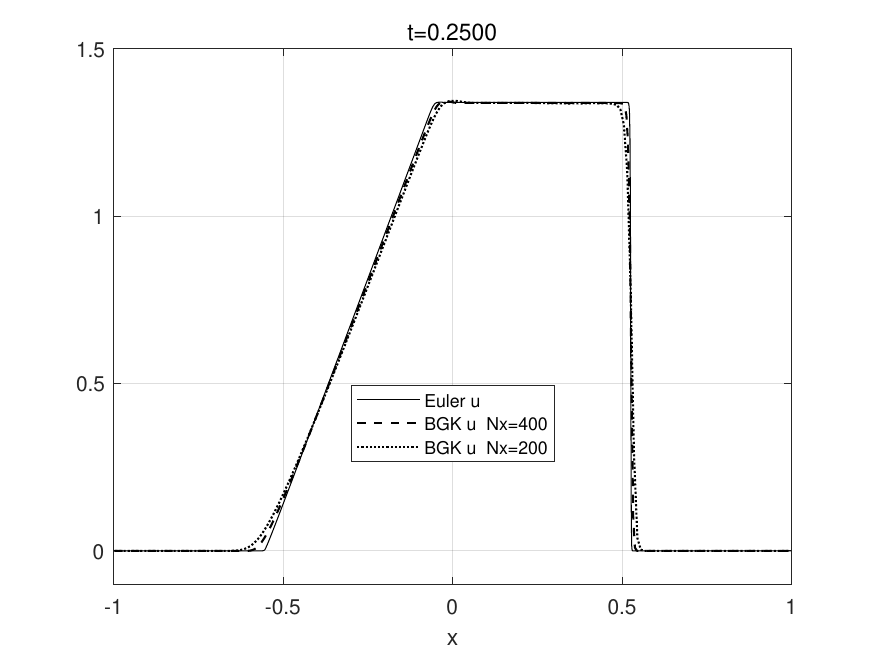}
		%		\subcaption{$u$}
	\end{subfigure}	
	\caption*{Case 3: $\rho_L=3$, $v\in[-8,8]$, $N_v=8000$.}
	\caption{Riemman problem II: Comparison of species densities (left panels) and bulk velocity of mixtures (right panels) when $\gamma_1=2$ and $\gamma_2=7/5$.}\label{fig2 2}
\end{figure}		
\begin{figure}[htbp]
	\centering
	\begin{subfigure}[h]{0.49\linewidth}
		\includegraphics[width=1\linewidth]{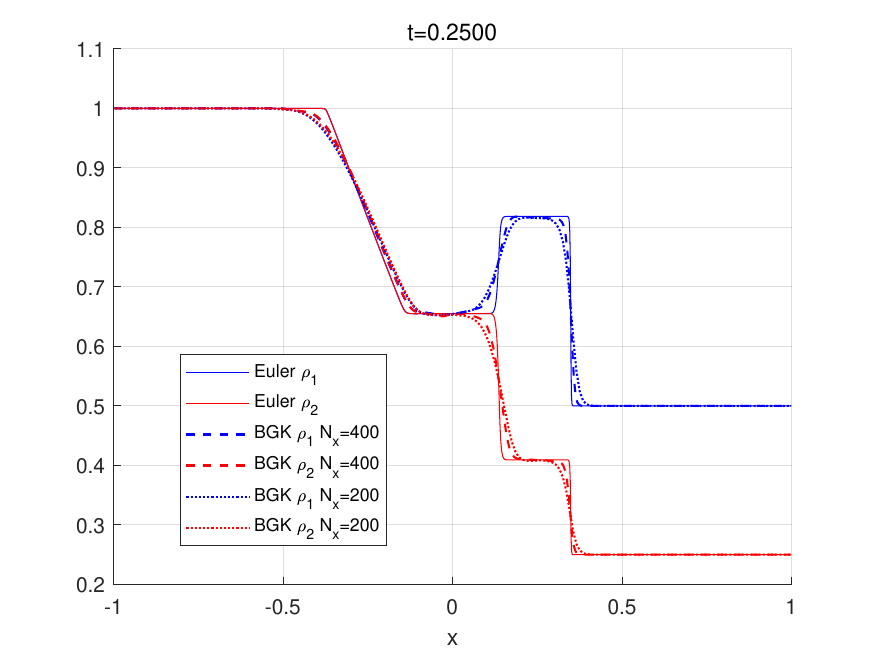}
		%		\subcaption{$n_1$ and $n_2$}
	\end{subfigure}	
	\begin{subfigure}[h]{0.49\linewidth}
		\includegraphics[width=1\linewidth]{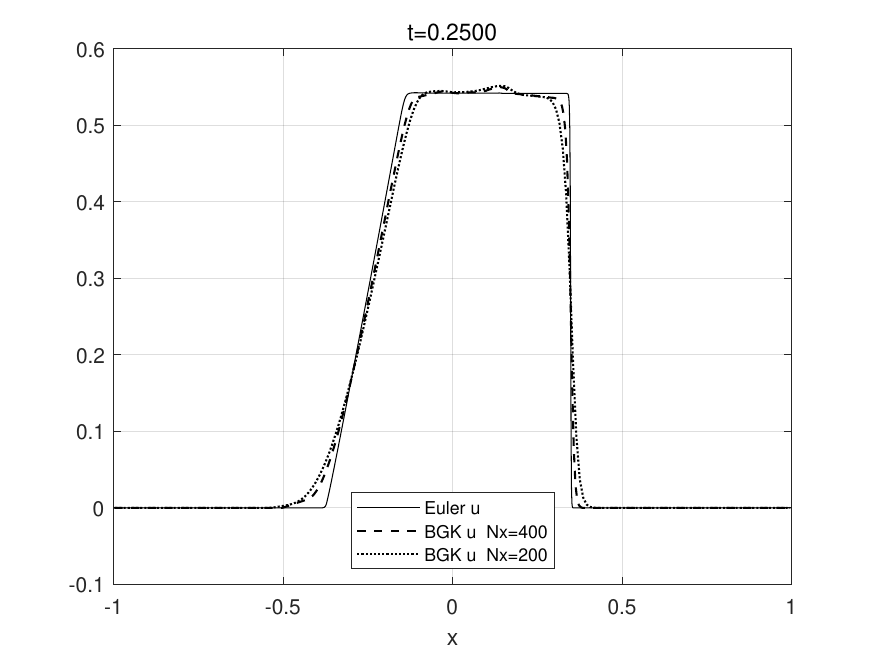}
		%		\subcaption{$u$}
	\end{subfigure}	
	\caption*{Case 1: $\rho_L=1$, $v\in[-3,3]$, $N_v=24000$.}
	\begin{subfigure}[h]{0.49\linewidth}
		\includegraphics[width=1\linewidth]{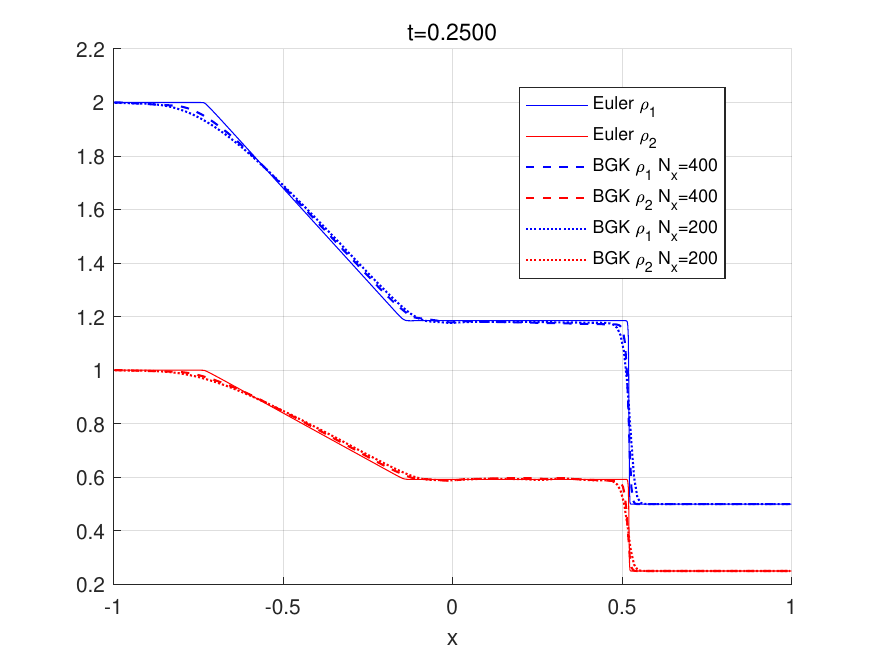}
		%		\subcaption{$n_1$ and $n_2$}
	\end{subfigure}	
	\begin{subfigure}[h]{0.49\linewidth}
		\includegraphics[width=1\linewidth]{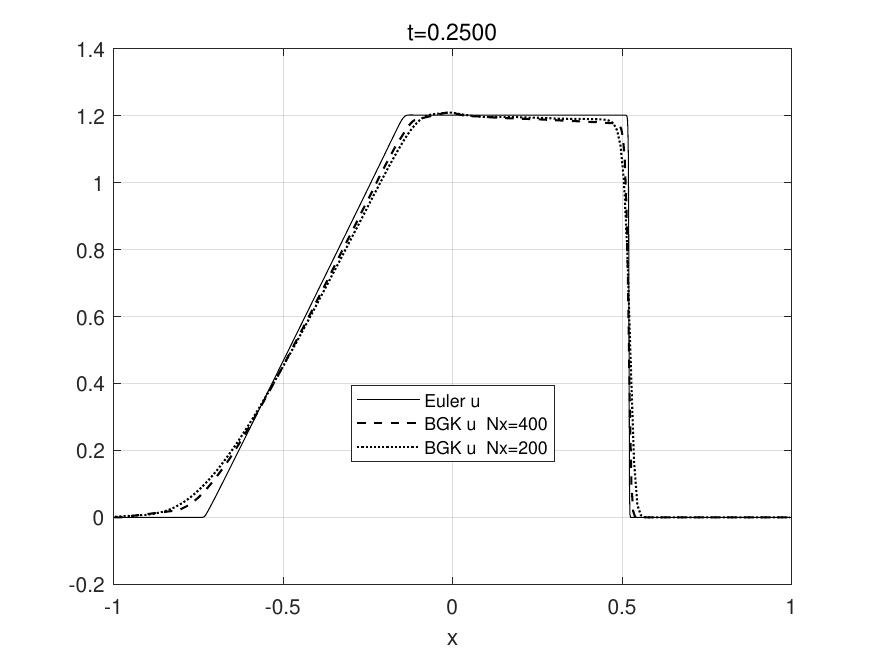}
		%		\subcaption{$u$}
	\end{subfigure}	
	\caption*{Case 2: $\rho_L=2$, $v\in[-5,5]$, $N_v=40000$.}
	\begin{subfigure}[h]{0.49\linewidth}
		\includegraphics[width=1\linewidth]{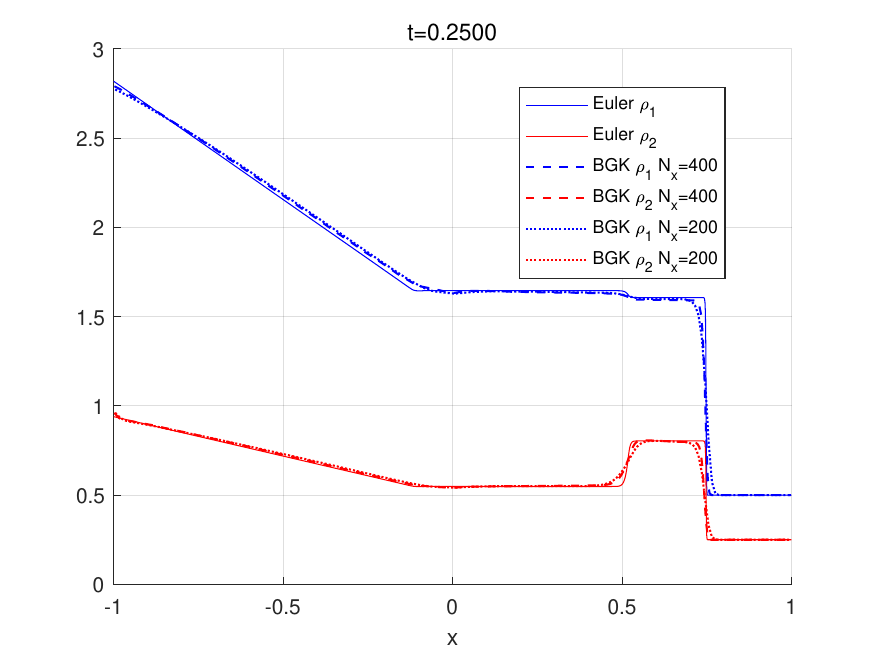}
		%		\subcaption{$n_1$ and $n_2$}
	\end{subfigure}	
	\begin{subfigure}[h]{0.49\linewidth}
		\includegraphics[width=1\linewidth]{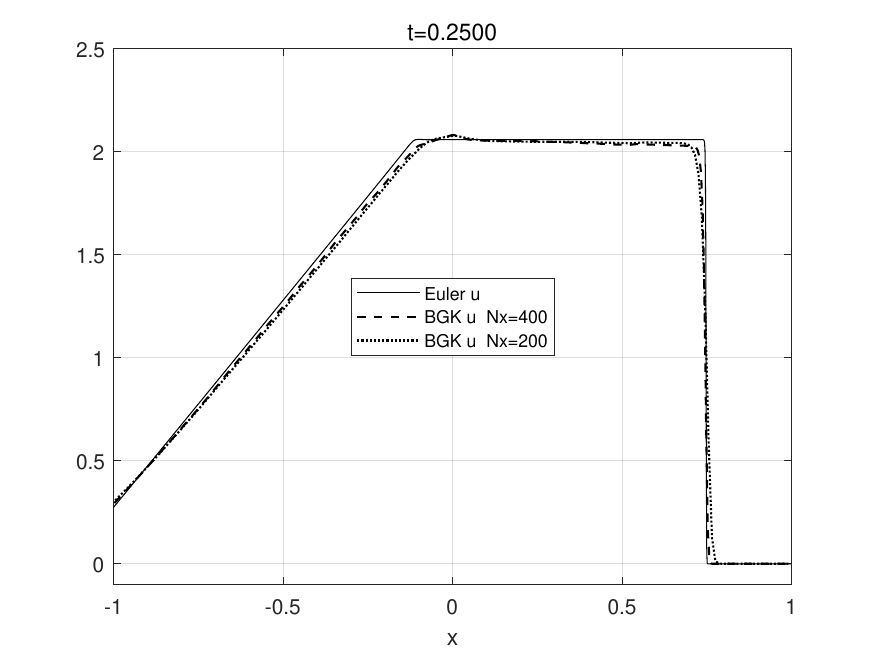}
		%		\subcaption{$u$}
	\end{subfigure}	
	\caption*{Case 3: $\rho_L=3$, $v\in[-8,8]$, $N_v=64000$.}
	\caption{Riemman problem II: Comparison of species densities (left panels) and velocity (right panels) when $\gamma_1=3$ and $\gamma_2=7/5$.}\label{fig2 3}
\end{figure}		
In the figures, it is observed that numerical solutions associated with the BGK model with finer grid $N_x=400$ lead to sharper and better approximation of the reference solution. We also note that larger values of bulk velocity $\textbf{u}$ of the mixture are observable as the initial density $\rho_L$ and $\gamma_1$ are taken bigger as expected..

%%%%%%%%%%%%%%%%%%%%%%%%%%%%%%%%%%%%%%%%%%%%%%%%%%%%%%%%%%%%%%%%%%%%%%%%%%%%%%%%%5
%
%
%                         
%
%
%%%%%%%%%%%%%%%%%%%%%%%%%%%%%%%%%%%%%%%%%%%%%%%%%%%%%%%%%%%%%%%%%%%%%%%%%%%%%%%%%
 \section*{Acknowledgments}

 S. Y. Cho was supported by Learning \& Academic research institution for Master's$\cdot$PhD students, and Postdocs (LAMP) Program of the National Research Foundation of Korea (NRF) grant funded by the Ministry of Education (No. RS-2023-00301974).  The research of YPC and SS was supported by the NRF grant no. 2022R1A2C1002820 and RS-202400406821. B.-H. Hwang was supported by Basic Science Research Programs through the National Research Foundation of Korea (NRF) funded by the Ministry of Education (NRF-2019R1A6A1A10073079 and RS-2024-00462755).

%%%%%%%%%%%%%%%%%%%%%%%%%%%%%%%%%%%%%%%%%%%%%%%%%%%%%%%%%%%%%%%%%%%%%%%%%%%%%%%%%5
%
%
%                         
%
%
%%%%%%%%%%%%%%%%%%%%%%%%%%%%%%%%%%%%%%%%%%%%%%%%%%%%%%%%%%%%%%%%%%%%%%%%%%%%%%%%%

\appendix
\section{On truncated Maxwellians}\label{app_deri}
\subsection{Derivation via minimization principle}
This appendix is devoted to the derivation of the equilibrium distribution $\mathcal{M}_i$ via an entropy minimization principle, following the approach pioneered in \cite{B99}. As a preliminary step, we present an auxiliary lemma that provides several key integral identities. These moment formulas for truncated functions will be instrumental in both the asymptotic expansions and the hydrodynamic limit analysis.

\begin{lemma}\label{lem_vm0}
Let $\alpha \geq 0$ and $v = (v_1, \dots, v_n) \in \R^n$. Then the following integral identities hold:
\begin{enumerate}[label=(\roman*)]
\item $\displaystyle \int_{\mathrm{B}_1} (1 - |v|^2)^\alpha \, dv = \frac{1}{2} |\partial \mathrm{B}_1| \, B\left(\frac{n}{2}, \alpha + 1\right)$,
\item $\displaystyle \int_{\mathrm{B}1} v \otimes v (1 - |v|^2)^\alpha\, dv = \frac{1}{2n} |\partial \mathrm{B}_1|\, B\left(\frac{n}{2}+1, \alpha + 1\right) \mathbb{I}_{n \times n}$,
\item $\displaystyle \int_{\mathrm{B}_1} |v|^2 v_a^2 (1 - |v|^2)^\alpha \, dv = \frac{1}{2n} |\partial \mathrm{B}_1| \, B\left(\frac{n}{2}+2, \alpha + 1\right)$,
\item $\displaystyle \int_{\mathrm{B}_1} v_a^2 v_b^2 (1 - |v|^2)^\alpha \, dv = \frac{1}{2n(n+2)} |\partial \mathrm{B}_1| \, B\left(\frac{n}{2}+2, \alpha + 1\right)$ for $a \neq b$,
\item $\displaystyle \int_{\mathrm{B}_1} v_a^4 (1 - |v|^2)^\alpha \, dv = \frac{3}{2n(n+2)} |\partial \mathrm{B}_1| \, B\left(\frac{n}{2}+2, \alpha + 1\right)$.
\end{enumerate}
Here $B(\cdot, \cdot)$ is the beta function defined as
$$
B(p,q) = \int_0^1 t^{p-1}(1 - t)^{q-1} dt = \frac{\Gamma(p)\Gamma(q)}{\Gamma(p+q)}.
$$
\end{lemma}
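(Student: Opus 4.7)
The overall strategy is to reduce each of the five integrals to a one-dimensional radial integral via spherical coordinates $v = r\omega$ with $r \in [0,1]$ and $\omega \in \partial \mathrm{B}_1$, and then to apply the substitution $t = r^2$, which converts the radial integral into the standard representation $\int_0^1 t^{p-1}(1-t)^{q-1}dt = B(p,q)$ of the beta function. For the angular integrations, I will invoke $O(n)$-invariance (symmetry in the $v_a$'s and sign flips) to identify the tensor structure, and then extract the scalar coefficients by taking suitable traces.

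For (i), spherical coordinates yield $|\partial \mathrm{B}_1|\int_0^1 r^{n-1}(1-r^2)^\alpha dr$, and $t = r^2$ gives the stated formula. For (ii), I note that $\int_{\mathrm{B}_1} v_a v_b (1-|v|^2)^\alpha dv$ vanishes when $a \neq b$ by odd symmetry and is independent of $a$ when $a = b$; therefore the matrix is a scalar multiple of $\mathbb{I}_{n\times n}$, and its trace $\int_{\mathrm{B}_1} |v|^2 (1-|v|^2)^\alpha dv$ is computed exactly as in (i) with an extra factor $r^2$. For (iii), the same isotropy argument gives $\int v_a^2 F(|v|)\,dv = \tfrac{1}{n}\int |v|^2 F(|v|)\,dv$ with $F(|v|) = |v|^2(1-|v|^2)^\alpha$, reducing to a single radial integral.

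For (iv) and (v), the main point is to identify the rank-four tensor
\begin{equation*}
T_{abcd} := \int_{\mathrm{B}_1} v_a v_b v_c v_d (1-|v|^2)^\alpha\, dv.
\end{equation*}
Since the integrand is $O(n)$-invariant after angular averaging and totally symmetric in the four indices, $T_{abcd}$ must be of the form $A(\delta_{ab}\delta_{cd} + \delta_{ac}\delta_{bd} + \delta_{ad}\delta_{bc})$ for a single scalar $A$. I will determine $A$ by contracting with $\delta_{ab}\delta_{cd}$: the left-hand side becomes $\int_{\mathrm{B}_1} |v|^4 (1-|v|^2)^\alpha dv$, which in spherical coordinates plus $t = r^2$ equals $\tfrac12 |\partial \mathrm{B}_1|\, B(n/2+2, \alpha+1)$, while the right-hand side is $A(n^2+2n)$. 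Solving yields $A = \tfrac{1}{2n(n+2)}|\partial \mathrm{B}_1|\, B(n/2+2, \alpha+1)$. Then (iv) follows from $T_{aabb} = A$ for $a \neq b$, and (v) from $T_{aaaa} = 3A$.

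The argument is essentially routine; the only point requiring care is the tensor decomposition in the last step, where one must justify that the three $\delta\delta$-combinations span the space of $O(n)$-invariant totally symmetric rank-four tensors and that the coefficients of the three terms must coincide by symmetry. This is standard, but I will state it explicitly to make the proof self-contained, since it is the only non-mechanical ingredient among the five computations.
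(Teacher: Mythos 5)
Your proof is correct, and for parts (i)--(iii) it follows the same route as the paper (spherical coordinates, then $t=r^2$, then rotational symmetry to reduce to traces). For parts (iv)--(v), however, your approach genuinely differs from the paper's. The paper evaluates the angular moments $\int_{\partial \mathrm B_1}\omega_a^2\omega_b^2\,d\omega$ and $\int_{\partial\mathrm B_1}\omega_a^4\,d\omega$ by combining the algebraic identity obtained from squaring $\sum_a\omega_a^2=1$ on the sphere (which links the two moments through one linear relation) with a direct trigonometric computation of the cross moment $\int\omega_a^2\omega_b^2\,d\omega$ in explicit spherical angles (involving repeated use of the $\sin^n$-reduction formula). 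You instead observe that the rank-four tensor $T_{abcd}=\int_{\mathrm B_1}v_av_bv_cv_d(1-|v|^2)^\alpha\,dv$ is rotation-covariant and totally symmetric, hence must equal $A(\delta_{ab}\delta_{cd}+\delta_{ac}\delta_{bd}+\delta_{ad}\delta_{bc})$ for a single scalar $A$, and you determine $A$ by a single full contraction giving $\int|v|^4(1-|v|^2)^\alpha\,dv = A\,n(n+2)$, from which (iv) and (v) drop out simultaneously as $T_{aabb}=A$ and $T_{aaaa}=3A$. Your route avoids the explicit $\sin^n\varphi$ integrals entirely and obtains both fourth-moment identities from one calculation, at the small cost of invoking (and, as you rightly note, justifying) the classification of isotropic symmetric rank-four tensors; the paper's route is more elementary and self-contained but requires two separate computations. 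One minor phrasing caveat: the integrand $v_av_bv_cv_d(1-|v|^2)^\alpha$ is not itself $O(n)$-invariant; what you mean (and what you need) is that the \emph{tensor} $T$ is $O(n)$-invariant because the domain, the measure, and the radial weight are all rotation-invariant, so that $T_{abcd}=R_{aa'}R_{bb'}R_{cc'}R_{dd'}T_{a'b'c'd'}$ for every $R\in O(n)$. Stating it this way will make the decomposition argument airtight.
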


\begin{proof}
We evaluate these integrals using spherical coordinates. Set $v = r\omega$, with $r \in [0,1]$ and $\omega \in \partial \mathrm{B}_1$, then $dv = r^{n-1} dr d\omega$ and $|v| = r$. A homogeneous polynomial $g(v)$ of degree $k$ decomposes as $g(r\omega) = r^k h(\omega)$, with $h(\omega)$ homogeneous on the sphere.

Thus,
$$
\int_{{\rm B}_1} g(v)(1 - |v|^2)^\alpha dv = \lt(\int_{\pa {\rm B}_1} h(\omega) \,d\omega\rt) \lt(\int_0^1 r^{n - 1 + k} (1 - r^2)^\alpha \,dr\rt).
$$
The angular part is evaluated using standard identities:
\begin{align*}
\int_{\pa {\rm B}_1} \omega_a^2 d\omega &= \frac{1}{n} |\pa {\rm B}_1|,  \\
\int_{\pa {\rm B}_1} \omega_a^4 d\omega &= \frac{3}{n(n+2)} |\pa {\rm B}_1|, \\
\int_{\pa {\rm B}_1} \omega_a^2 \omega_b^2 d\omega &= \frac{1}{n(n+2)} |\pa {\rm B}_1| \quad (a \ne b).
\end{align*}
These identities result from rotational symmetry and the identity $\sum_{a=1}^n \omega_a^2 = 1$ on the sphere. Specifically,
\[
\int_{\pa {\rm B}_1} \omega_a^2 \, d\omega = \frac{1}{n} \int_{\pa {\rm B}_1} \sum_{a=1}^n \omega_a^2 \, d\omega = \frac{1}{n} |\pa {\rm B}_1|.
\]

We next consider the rotationally symmetric fourth-order tensor.  Note that 
\[
\int_{\pa {\rm B}_1} \left( \sum_{a=1}^n \omega_a^2 \right)^2 d\omega = \int_{\pa {\rm B}_1} 1^2\, d\omega = |\pa {\rm B}_1|.
\]
On the other hand, expanding the square gives
\[
 \left( \sum_{a=1}^n \omega_a^2 \right)^2 = \sum_{a=1}^n \omega_a^4 + 2 \sum_{1 \le a < b \le n} \omega_a^2 \omega_b^2.
\]
Note that for $a \neq b$
\[
   \int_{\pa {\rm B}_1}\omega_a^2 \omega_b^2\,d\omega =   \frac{ \lt( \int_0^\pi \sin^n\varphi_1\cos^2\varphi_1\,d\varphi_1\rt) \lt(\int_0^\pi \sin^{n-3}\varphi_2\cos^2\varphi_2\,d\varphi_2\rt)}{ \lt(\int_0^\pi \sin^{n-2}\varphi_1\,d\varphi_1\rt) \lt(\int_0^\pi \sin^{n-3}\varphi_2\,d\varphi_2\rt)}|\pa {\rm B}_1| = \frac{1}{n(n+2)}|\pa {\rm B}_1|,
\]
where we used
\[
\int_0^\pi \sin^n x\,dx=\frac{n-1}{n}\int_0^\pi\sin^{n-2}x\,dx
\]
so that
\[
\int_0^\pi \sin^n\varphi_1\cos^2\varphi_1\,d\varphi_1=\frac{n-1}{n(n+2)}\int_0^\pi \sin^{n-2}\varphi_1\,d\varphi_1
\]
and
\[ 
\int_0^\pi \sin^{n-3}\varphi_2\cos^2\varphi_2\,d\varphi_2=\frac{1}{n-1}\int_0^\pi \sin^{n-3}\varphi_2\,d\varphi_2.
\]
This together with symmetry implies
\[
|\pa {\rm B}_1| = n \int_{\pa {\rm B}_1} \omega_1^4 \, d\omega  + n(n - 1) \int_{\pa {\rm B}_1} \omega_1^2 \omega_2^2 \, d\omega = n \int_{\pa {\rm B}_1} \omega_1^4 \, d\omega + \frac{n-1}{n+2}|\pa {\rm B}_1|.
\]
Thus,
\[
\int_{\pa {\rm B}_1} \omega_a^4 \, d\omega = \frac{3}{n(n+2)} |\pa {\rm B}_1|.
\]
The radial integral is computed via the substitution $s = r^2$, yielding
$$
\int_0^1 r^{m} (1 - r^2)^\alpha dr = \frac{1}{2} B\left(\frac{m+1}{2}, \alpha + 1\right).
$$
This produces the stated identities.
\end{proof}

To characterize the equilibrium distributions $\mathcal{M}_i$, we now turn to an entropy minimization framework under suitable macroscopic constraints. For $\gamma_i \in \left(1,\frac{n+2}{n}\right)$ and $m_i>0$, we define the entropy density function 
\begin{align} \label{eq: def: h}
    h_{i}(f_i, v) := \frac{m_i|v|^2}{2}f_i + \frac{1}{2c_i^{2/d_i}} \frac{f_i^{1+2/d_i}}{1+2/d_i}
\end{align}
and set
\begin{align*}
    H(p_1,p_2,v) := \sum_{i=1}^2 \nu_i  h_i(p_i, v).
\end{align*}
Given $\gamma_1, \gamma_2 \in \left(1,\frac{n+2}{n}\right)$ and nonnegative functions $f_1, f_2$, our goal is to minimize the total entropy 
\[%\begin{equation}\label{eq: argmin prob}
\begin{split}
    \argmin \int_{\R^n} H(p_1,p_2,v)\,dv &= \argmin \int_{\R^n} \sum_{i=1}^2 \nu_i h_{i}(p_i, v)\, dv  \\
    &= \argmin \left\{\int_{\R^n} \sum_{i=1}^2 \nu_i \Big(\frac{m_i|v|^2}{2}p_i + \frac{1}{2c_i^{2/d_i}} \frac{p_i^{1+2/d_i}}{1+2/d_i} \Big) \, dv\right\},
    \end{split}
\]%\end{equation}
subject to the following constraints:
\begin{align}
\label{eq: constraints}
    \begin{cases}
        p_i(v) \ge 0 \quad i=1,2,\\[2mm]
       \displaystyle  \int_{\R^n} (p_i - f_i) \,dv= 0 \quad i=1,2,\\[2.5mm]
       \displaystyle   \sum_{i=1}^2  \nu_i \int_{\R^n} v  m_i(f_i - p_i) \,dv = 0.
    \end{cases}
\end{align}
Let $\mathcal{X}_i$ denote the admissible class for  $p_i$:
\[%\begin{align}\label{eq: admissible set}
    \calX_i := \lt\{p:\R^n \to \R, \, p\ge 0, \, p\in L^1(\R^n,(1+|v|^2)dv) \cap L^{1+2/d_i}(\R^n) \rt\}.
\]%\end{align}
We introduce the Lagrangian functional $\calL:\calX_1 \times \calX_2 \times\R^2\times\R^n \to \R$ defined by
\begin{align*}
    \calL(p_1,p_2,\eta,\lambda) = \int_{\R^n} H(p_1,p_2,v) \,dv - \begin{pmatrix} \eta \\ \lambda \end{pmatrix} \cdot \int_{\R^n} \begin{pmatrix} p_1 - f_1 \\ p_2 - f_2 \\ v\Big( \nu_1 m_1(p_1-f_1) + \nu_2 m_2(p_2 - f_2) \Big) \end{pmatrix} dv,
\end{align*}
where $(\eta,\lambda) = (\eta_1,\eta_2,\lambda)\in \R \times \R \times \R^n$ are the Lagrange multipliers associated with the mass and momentum constraints. 

Let $(\mathcal{M}_1,\mathcal{M}_2,\eta^*,\lambda^*)$ be a critical point of this functional. Then, for any $p \in \mathcal{X}_1$ satisfying the constraints \eqref{eq: constraints}, the directional derivative of $\mathcal{L}$ along $(p, 0)$ must vanish: 
\begin{align*}
    \delta\calL((\mathcal{M}_1, \mathcal M_2, \eta^*, \lambda^*); (p,0,0,0) ) := \frac{d}{d\varepsilon} \calL(\mathcal M_1 + \varepsilon p, \mathcal M_2, \eta^*, \lambda^*) \Big|_{\varepsilon = 0} = 0,
\end{align*}
whenever it is true that $\mathcal{M}_1 + \varepsilon p \ge 0$ (for this is when the right-hand side is well-defined). Note in particular that this implies the above holds for each $p\in \calX_1$ that is supported only on $V_1 := \{v\in \R^n: \mathcal M_1 > 0\}$. By computing the variation, we obtain
\begin{align*}
    \delta \calL((\mathcal M_1, \mathcal{M}_2, \eta^*, \lambda^*);(p,0,0,0)) &= \int_{V_1} p(v)\cdot \Big( \frac{\nu_1 m_1}{2}|v|^2 + \frac{\nu_1}{2c_1^{2/d_1}}\mathcal M_1^{2/d_1} - \eta^*_1 -  \nu_1 m_1 \lambda^* \cdot v \Big) \,dv = 0,
\end{align*}
which implies the Euler--Lagrange condition:
\begin{align*}
    \frac{\nu_1}{2c_1^{2/d_1}}\mathcal M_1^{2/d_1} = -\frac{\nu_1 m_1}{2}|v|^2 + \eta_1^* + \nu_1 m_1\lambda^*\cdot v \quad \text{in}\quad V_1.
\end{align*}
By positivity of $\mathcal{M}_1$ in $V_1$, the right-hand side must be strictly positive there. At the boundary $\partial V_1 := \{ v : \mathcal{M}_1(v) = 0 \}$, convex analysis (see \cite[Section 3.1]{B99}) yields the inequality
\begin{align*}
    \frac{m_1}{2}|v|^2 \le \frac{\eta^*_1}{\nu_1} +  m_1\lambda^*\cdot v \quad \text{on}\quad \pa V_1.
\end{align*}
Hence, we conclude that the equilibrium distribution takes the truncated polynomial form
\begin{align*}
    \mathcal M_1 = c_1 \Big(-m_1|v|^2 + 2(\frac{\eta_1^*}{\nu_1} +  \nu_1 m_1\lambda^*\cdot v) \Big)_+^{d_1/2}.
\end{align*}
A symmetric derivation yields the corresponding formula for $\mathcal{M}_2$:
\begin{align*}
    \mathcal M_2 = c_1 \Big(-m_2|v|^2 + 2(\frac{\eta_2^*}{\nu_2} + \nu_2 m_2\lambda^*\cdot v) \Big)_+^{d_2/2}.
\end{align*}

We now identify the explicit expressions for the Lagrange multipliers $\eta^*$ and $\lambda^*$ by utilizing the constraints \eqref{eq: constraints}, and in doing so, we obtain the explicit form of the truncated Maxwellians.

We begin with the mass and momentum identities for $\mathcal{M}_1$. By Lemma \ref{lem_vm0} (i) and the change of variables $v \mapsto v -  \lambda^*$, we find
\[%\begin{equation}\label{eq: m_1 0th}
\begin{split}
    n_1 = \int_{\R^n} \mathcal M_1(v) \,dv     &=c_1 \int_{\R^n}  \Big(- m_1 |v|^2 + 2\frac{\eta_1^*}{\nu_1} + m_1  | \lambda^*|^2 \Big)^{d_1/2}_+ \, dv \\
     &= c_1 \frac{( 2\frac{\eta_1^*}{\nu_1} + m_1 | \lambda^*|^2)^{\frac{d_1 + n}2}}{m_1^{\frac n2}} \int_{B_1}  (1 - |v|^2)^{d_1/2}_+ \, dv \\
     &= c_1 \frac{( 2\frac{\eta_1^*}{\nu_1} + m_1 | \lambda^*|^2)^{\frac1{\gamma_1 - 1}}}{m_1^{\frac n2}} \frac12 |\pa {\rm B}_1| B\lt( \frac n2, \frac{d_1}2 +1 \rt). 
    \end{split}
\]%\end{equation}
Using the following 
\[
|\pa {\rm B}_1| = \frac{2\pi^{\frac{n}{2}}}{\Gamma\left(\frac{n}{2}\right)}, \quad
c_i = \left( \frac{2\gamma_i}{\gamma_i - 1} \right)^{-\frac{1}{\gamma_i - 1}} \frac{\Gamma\left( \frac{\gamma_i}{\gamma_i - 1} \right)}{\pi^{\frac{n}{2}} \Gamma\left( \frac{d_i}{2} + 1 \right)}, \quad B\left( \frac{n}{2}, \frac{d_i}{2} \right) = \frac{\Gamma\left(\frac{n}{2}\right)\Gamma\left(\frac{d_i}{2}\right)}{\Gamma\left(\frac{n + d_i}{2}\right)},
\]
we get
\[
\frac{c_1}2 |\pa {\rm B}_1| B\lt( \frac n2, \frac{d_1}2 +1 \rt) = \left( \frac{2\gamma_1}{\gamma_1 - 1} \right)^{-\frac{1}{\gamma_1 - 1}}
\]
and thus
\[
    n_1 = \frac{( 2\frac{\eta_1^*}{\nu_1} + m_1 | \lambda^*|^2)^{\frac1{\gamma_1 - 1}}}{m_1^{\frac n2}} \left( \frac{2\gamma_1}{\gamma_1 - 1} \right)^{-\frac{1}{\gamma_1 - 1}}.
\]
This yields
\[
2\frac{\eta_1^*}{\nu_1} + m_1 | \lambda^*|^2 = \left( \frac{2\gamma_1}{\gamma_1 - 1} \right) m_1^{\frac n2 (\gamma_1 - 1)} n_1^{\gamma_1 - 1}.
\]

Next, using the symmetry of the integrand, we compute the momentum as
\bq\label{eq: calc vM1}
\int_{\mathbb{R}^n} v \mathcal{M}_1(v)\, dv =  \lambda^* n_1.
\eq
A symmetric computation yields
\[
\int_{\mathbb{R}^n} v \mathcal{M}_2(v)\, dv =   \lambda^* n_2.
\]
Applying the momentum constraint \eqref{eq: constraints}$_3$, we deduce
\bq\label{eq: lambda*}
\lambda^* = \frac{ \nu_1 \rho_1 u_1 +  \nu_2 \rho_2 u_2}{\nu_1 \rho_1 +  \nu_2 \rho_2} =: u.
\eq
%where $u$ denotes the bulk velocity of the system.
Plugging this into the expression for $\eta_1^*$, we obtain
\[%\begin{align}\label{eq: eta1 expression}
2\frac{\eta_1^*}{\nu_1} + m_1 |u|^2 =  \left( \frac{2\gamma_1}{\gamma_1 - 1} \right) m_1^{\frac n2 (\gamma_1 - 1)} n_1^{\gamma_1 - 1}.
\]%\end{align}

Therefore, we recover the explicit formula for $\mathcal{M}_1 = \mathcal{M}_1[n_1, u](v)$:
\[
\mathcal{M}_1[n_1, u](v) = c_1 \left( -m_1 |v - u|^2 + m_1^{\frac{n}{2}(\gamma_1 - 1)}  \frac{2\gamma_1}{\gamma_1 - 1} n_1^{\gamma_1 - 1} \right)_+^{d_1/2}.
\]
Similarly, we obtain the expression for $\mathcal{M}_2$:
\[
\mathcal{M}_2[n_2, u](v) = c_2 \left( -m_2 |v - u|^2 + m_2^{\frac{n}{2}(\gamma_2 - 1)}  \frac{2\gamma_2}{\gamma_2 - 1} n_2^{\gamma_2 - 1} \right)_+^{d_2/2}.
\]
These explicit forms confirm that the equilibrium distribution functions are truncated polynomials supported on velocity balls centered at the bulk velocity $u$, with radii determined by the local densities and parameters $m_i$, $\gamma_i$. The case $\gamma_i = 1 + \frac2n$ can be treated similarly by taking the appropriate limit.

%%%%%%%%%%%%%%%%%%%%%%%%%%%%%%%%%%%%%%%%%%%%%%%%%%%%%%%%%%%%%%%%%%%%%%%%%%%%%%%%%5
%
%
%                         
%
%
%%%%%%%%%%%%%%%%%%%%%%%%%%%%%%%%%%%%%%%%%%%%%%%%%%%%%%%%%%%%%%%%%%%%%%%%%%%%%%%%%
\subsection{Macroscopic identities for Maxwellians}
In this subsection, we derive several macroscopic identities satisfied by the truncated Maxwellian distributions. These identities correspond to the conserved moments and energy functional evaluated at equilibrium. 
\begin{lemma}\label{lem_ident}
Then the following identities hold:
    \begin{align}
        &\int_{\R^n} \calM_i[n_i, u](v)\,dv = n_i, \nonumber \\
        &\int_{\R^n} v\calM_i[n_i, u](v)\,dv = n_i u, \label{eq: vM}\\
                &\int_{\R^n} (v\otimes v) \calM_i[n_i, u](v)\,dv = n_i u \otimes u + m_i^{\frac{n(\gamma_i-1)}{2}-1} n_i^{\gamma_i} \label{eq: tens prod M1}\mathbb{I}_{n \times n},\\
        &\int_{\R^n} h_i(\calM_i[n_i, u](v),v)\,dv = \frac{m_i}{2}n_i|u|^2 + \frac{m_i^{\frac{n(\gamma_i-1)}{2}}}{\gamma_i - 1} n_i^{\gamma_i}\nonumber ,
    \end{align}
    where $h_i$ is given by \eqref{eq: def: h}. 
\end{lemma}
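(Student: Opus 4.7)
The plan is to reduce each integral to a scalar Beta integral over the unit ball by two successive changes of variable, and then invoke Lemma \ref{lem_vm0}. Translating via $w = v-u$ removes the $u$-dependence, so that
\begin{equation*}
\mathcal{M}_i[n_i,u] = c_i\, m_i^{d_i/2}\bigl(R_i^2 - |w|^2\bigr)_+^{d_i/2},\qquad R_i^2 := m_i^{\frac{n}{2}(\gamma_i-1)-1}\,\frac{2\gamma_i}{\gamma_i-1}\,n_i^{\gamma_i-1}.
\end{equation*}
A second rescaling $w=R_i\omega$ contributes a Jacobian $R_i^n$ and converts every monomial moment into an integral of $(1-|\omega|^2)_+^{d_i/2}$ (or of this weight with exponent shifted by one) against a polynomial on $B_1$. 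These are precisely the integrals provided by Lemma \ref{lem_vm0}. The limiting case $\gamma_i=(n+2)/n$, for which $d_i=0$, is handled by the same computations with the weight collapsing to $\mathbf{1}_{B_1}$, or by passing to the limit.

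\textbf{Moments (i)--(iii).} For (i), the two substitutions give $\int_{\R^n}\mathcal{M}_i\,dv = c_i\, m_i^{d_i/2}\,R_i^{n+d_i}\int_{B_1}(1-|\omega|^2)_+^{d_i/2}\,d\omega$. Using the key algebraic identity $d_i+n = 2/(\gamma_i-1)$, one checks that $m_i^{d_i/2}R_i^{n+d_i} = (2\gamma_i/(\gamma_i-1))^{1/(\gamma_i-1)}n_i$, and the definition of $c_i$ is tailored precisely so that the surviving Beta and Gamma factors cancel; this is in fact the same calculation that produced the normalization of $\mathcal{M}_i$ in the entropy-minimization derivation above. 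Identity (ii) follows immediately from the translation step and the odd symmetry $w\mapsto -w$, which kills the $w$-integral and leaves $u\int \mathcal{M}_i\,dw = n_i u$. For (iii), expanding $(w+u)\otimes (w+u)$ yields three terms, the two linear-in-$w$ pieces vanishing by oddness; the remaining pure quadratic term is evaluated via Lemma \ref{lem_vm0}(ii), and the coefficient simplifies to $m_i^{\frac{n(\gamma_i-1)}{2}-1}n_i^{\gamma_i}\mathbb{I}_{n\times n}$ after applying the Beta recurrence $B(n/2+1,d_i/2+1) = \frac{n}{n+d_i+2}\,B(n/2,d_i/2+1)$ and the relation $R_i^{n+d_i+2} = R_i^2\cdot R_i^{n+d_i}$.

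\textbf{Entropy identity (iv) and main obstacle.} Splitting $h_i$ along its two terms,
\begin{equation*}
\int h_i(\mathcal{M}_i,v)\,dv = \frac{m_i}{2}\int |v|^2\mathcal{M}_i\,dv + \frac{1}{2c_i^{2/d_i}(1+2/d_i)}\int \mathcal{M}_i^{\,1+2/d_i}\,dv.
\end{equation*}
The first piece is the trace of (iii) and equals $\frac{m_i}{2}n_i|u|^2 + \frac{n}{2}\,m_i^{n(\gamma_i-1)/2}\,n_i^{\gamma_i}$. For the second piece, the same shift-and-rescale reduces it to a constant multiple of $\int_{B_1}(1-|\omega|^2)^{d_i/2+1}d\omega$, again evaluated via Lemma \ref{lem_vm0}(i) with exponent $\alpha = d_i/2+1$. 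Combining and using the Beta recurrence $B(n/2,d_i/2+2) = \frac{d_i+2}{n+d_i+2}B(n/2,d_i/2+1)$ together with $d_i+n+2 = 2\gamma_i/(\gamma_i-1)$, the two contributions consolidate to the clean coefficient $\frac{1}{\gamma_i-1}$ in front of $m_i^{n(\gamma_i-1)/2}n_i^{\gamma_i}$. There is no conceptual difficulty in any of this; the only real work is careful bookkeeping of Gamma-function factors, for which the essential inputs are the identity $d_i+n = 2/(\gamma_i-1)$, the Beta recurrence $B(p,\alpha+1)/B(p,\alpha) = \alpha/(p+\alpha)$, and the explicit form of $c_i$.
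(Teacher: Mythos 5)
Your proof is correct and follows essentially the same route as the paper's: shift to $w=v-u$, rescale by $R_i$, reduce to Beta-integrals on $\mathrm{B}_1$ via Lemma \ref{lem_vm0}, and simplify with the relations $n+d_i=\tfrac{2}{\gamma_i-1}$ and the normalization of $c_i$. The paper likewise handles (i) by the defining normalization, (ii) by odd symmetry, (iii) by expanding $v\otimes v$ about $u$ and using Lemma \ref{lem_vm0}(ii), and (iv) by splitting $h_i$ into its kinetic and power-law parts, so no meaningful difference in method.
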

\begin{proof} For brevity, we only compute the quantities associated with $\mathcal{M}_1$; the case for $\mathcal{M}_2$ follows analogously by symmetry. We also consider the case $\gamma_i \in (1, 1 + \frac2n)$ since the case $\gamma_i = 1 + \frac2n$ can be treated similarly.

    The first assertion is just by construction. The momentum identity \eqref{eq: vM} follows from  \eqref{eq: calc vM1} and \eqref{eq: lambda*}.  For the identity \eqref{eq: tens prod M1}, we observe that
    \[
    \int_{\R^n} (v\otimes v) \calM_1[n_1, u](v)\,dv = n_1 u \otimes u + \int_{\R^n} (v-u)\otimes (v-u) \calM_1[n_1, u](v)\,dv.
    \]
    On the other hand,
    \begin{align*}
    \int_{\R^n} (v-u)\otimes (v-u) \calM_1[n_1, u](v)\,dv &= \int_{\R^n} (v\otimes v) \calM_1[n_1, 0](v)\,dv\cr
    &= c_1 \int_{\mathbb{R}^n}  (v\otimes v) \left( m_1^{\frac{n}{2}(\gamma_1 - 1)} \frac{2\gamma_1}{\gamma_1 - 1} n_1^{\gamma_1 - 1} - m_1 |v|^2 \right)_+^{\frac{d_1}{2}}\, dv \\
&= \frac{c_1}{m_1^{\frac{n + 2}{2}}} \left( m_1^{\frac{n}{2}(\gamma_1 - 1)} \frac{2\gamma_1}{\gamma_1 - 1} n_1^{\gamma_1 - 1} \right)^{\frac{n + d_1 + 2}{2}} \int_{{\rm B}_1}(v\otimes v)  (1 - |v|^2)^{\frac{d_1}{2}}\, dv \\
&=c_1 m_1^{\frac{n}{2}(\gamma_1 - 1)-1} \lt(\frac{2\gamma_1}{\gamma_1 - 1}\rt)^{\frac{\gamma_1}{\gamma_1-1}} n_1^{\gamma_1} \frac{1}{2n} |\partial \mathrm{B}_1|\, B\left(\frac{n}{2}+1, \frac{d_1}2 + 1\right) \mathbb{I}_{n \times n}
    \end{align*}
    due to Lemma \ref{lem_vm0} (ii). Since 
    \[
   c_1  \lt(\frac{2\gamma_1}{\gamma_1 - 1}\rt)^{\frac{\gamma_1}{\gamma_1-1}}   \frac{1}{2n} |\partial \mathrm{B}_1|\, B\left(\frac{n}{2}+1, \frac{d_1}2 + 1\right) = 1,
    \]
    we get
    \[
     \int_{\R^n} (v-u)\otimes (v-u) \calM_1[n_1, u](v)\,dv = m_1^{\frac{n(\gamma_1-1)}{2}-1} n_1^{\gamma_1}\mathbb{I}_{n \times n},
    \]
    and thus the desired identity is obtained. 
 
 For the kinetic entropy estimate, as a direct consequence of the above, we deduce
 \begin{align*}
        \int_{\R^n} \frac{m_1|v|^2}{2}\calM_1[n_1, u](v)\,dv = \frac{m_1}{2}n_1|u|^2 + \frac{n}{2}m_1^{\frac{n(\gamma_1-1)}{2}} n_1^{\gamma_1} .
    \end{align*}
    Next, we calculate
    \begin{align*}
        \frac{1}{2c_1^{\frac{2}{d_1}}} \frac{1}{1+\frac{2}{d_1}} \int_{\R^n}  (\calM_1[n_1, u](v))^{1+\frac{2}{d_1}} \,dv 
                & = \int_{\R^n} \frac{1}{2c_1^{\frac{2}{d_1}}} \frac{1}{1+\frac{2}{d_1}} (\calM_1[n_1, 0](v))^{1+\frac{2}{d_1}} \,dv \\
                & = \frac{c_1}{2(1 + \frac2{d_1})} \frac{\lt(m_1^{\frac{n}{2}(\gamma_1 - 1)} \frac{2\gamma_1}{\gamma_1-1}n_1^{\gamma_1 - 1}\rt)^{\frac1{\gamma_1 - 1} + 1}}{m_1^{\frac n2}} \int_{{\rm B}_1} (1 - |v|^2)^{\frac{d_1+2}{2}}\,dv\cr
                &  = \frac{c_1}{2(1 + \frac2{d_1})} m_1^{\frac{n}{2}(\gamma_1 - 1)}\lt(\frac{2\gamma_1}{\gamma_1 - 1}\rt)^{\frac{\gamma_1}{\gamma_1-1}} n_1^{\gamma_1} \frac{1}{2} |\partial \mathrm{B}_1| \, B\left(\frac{n}{2}, \frac{d_1}{2} + 2\right)\cr
        &  = \frac{d_1}{2} m_1^{\frac{n(\gamma_1-1)}{2}} n_1^{\gamma_1}.
    \end{align*}
Combining those results concludes the desired result.
\end{proof}

\begin{remark}\label{rem: each h} 
The Maxwellians $\calM_i$ were obtained by minimizing the total entropy functional
\[
\sum_{i=1}^2 \intr  \nu_i  h_i(p_i, v)\,dv
\]
under the constraints of given macroscopic quantities. A noteworthy consequence of this construction is that each $\calM_i$ also minimizes the individual entropy $\intr h_i(f_i,v)\,dv$ among all functions with the same local moments. That is, for any  $f_1, f_2 \in \calX_1 \times \calX_2$, we have
\[
\intr h_i(\calM_i(f_1, f_2), v)\,dv \leq \intr h_i(f_i, v)\,dv, \quad i=1,2.
\]
This decoupled entropy-minimizing property will be crucial in the relative entropy method when establishing the rigorous hydrodynamic limit. 
\end{remark}

%%%%%%%%%%%%%%%%%%%%%%%%%%%%%%%%%%%%%%%%%%%%%%%%%%%%%%%%%%%%%%%%%%%%%%%%%%%%%%%%%5
%
%
%                         
%
%
%%%%%%%%%%%%%%%%%%%%%%%%%%%%%%%%%%%%%%%%%%%%%%%%%%%%%%%%%%%%%%%%%%%%%%%%%%%%%%%%%
%%%%%%%%%%%%%%%%%%%%%%%%%%%%%%%%%%%%%%%%%%%%%%%%%%%%%%%%%%%%%%%%%%%%%%%%%%%%%%%%%5
%
%
%                         
%
%
%%%%%%%%%%%%%%%%%%%%%%%%%%%%%%%%%%%%%%%%%%%%%%%%%%%%%%%%%%%%%%%%%%%%%%%%%%%%%%%%%

%
%
%                         
%
%
%%%%%%%%%%%%%%%%%%%%%%%%%%%%%%%%%%%%%%%%%%%%%%%%%%%%%%%%%%%%%%%%%%%%%%%%%%%%%%%%%

%%%%%%%%%%%%%%%%%%%%%%%%%%%%%%%%%%%%%%%%%%%%%%%%%%%%%%%%%%%
%
%
%
%
%
%
%
%
%
%
%
%
%
%%%%%%%%%%%%%%%%%%%%%%%%%%%%%%%%%%%%%%%%%%%%%%%%%%%%%%%%%%%

\section{Proof of Lemma \ref{lem_vm}: Velocity moments of truncated Maxwellians}\label{app_vm}

In this appendix, we provide a detailed proof of Lemma \ref{lem_vm}, which collects several key velocity moment identities associated with the truncated Maxwellian distributions $\mathcal{M}_i[n_i, u]$. These identities are essential in computing first-order corrections in the Chapman--Enskog expansion and appear repeatedly in the derivation of macroscopic quantities such as the pressure tensor and momentum flux.

For the reader's convenience, we recall below Lemma \ref{lem_vm}.

\begin{lemma} Let $\mathcal{M}_i[n_i, u]$ be the truncated Maxwellian defined by
\[
\mathcal{M}_i[n_i,u](v) = c_i \left( m_i^{\frac{n}{2}(\gamma_i-1)} \frac{2\gamma_i}{\gamma_i-1} n_i^{\gamma_i - 1} - m_i |v-u|^2 \right)_+^{\frac{d_i}{2}},
\]
where $c_i$ is a normalization constant. Then the following identities hold:
\begin{enumerate}[label=(\roman*)]
\item[(i)] Zeroth-order moment:
\[
c_i^{\frac{2}{d_i}}\int_{\R^n} \mathcal{M}_i^{\frac{d_i - 2}{d_i}}[n_i, 0](v) \,dv = \frac{1}{d_i \gamma_i m_i^{\frac{n}{2}(\gamma_i - 1)}} n_i^{2 - \gamma_i}.
\]
\item[(ii)] Second-order moment:
\[
c_i^{\frac{2}{d_i}}\int_{\R^n} (v  \otimes v) \mathcal{M}_i^{\frac{d_i - 2}{d_i}}[n_i, 0](v)\,dv = \frac{n_i}{d_i m_i} \mathbb{I}_{n \times n}.
\]
\item[(iii)] Graident-weighted fourth-order moment:
\begin{align*}
&c_i^{\frac{2}{d_i}} \sum_{p,q=1}^n \pa_{x_p} u_q \int_{\R^n} v_p  v_q  (v \otimes v) \mathcal{M}_i^{\frac{d_i - 2}{d_i}}[n_i, 0](v) \, dv \cr
&\quad = \frac{n}{n+2}\frac{\gamma_i}{d_i m_i^{2 - \frac n2(\gamma_i-1)}}n_i^{\gamma_i}\left\{(\nabla_x\cdot u) \mathbb{I}_{n\times n} +\nabla_x u+(\nabla_xu)^t \right\}.
\end{align*}
\end{enumerate}
\end{lemma}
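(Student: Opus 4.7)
The plan is to prove all three identities through a single common reduction to Lemma \ref{lem_vm0}. Introduce the shorthand $a_i := m_i^{\frac{n}{2}(\gamma_i-1)}\frac{2\gamma_i}{\gamma_i-1}n_i^{\gamma_i-1}$, so that $\mathcal{M}_i[n_i,0](v) = c_i(a_i - m_i|v|^2)_+^{d_i/2}$ and
\[
c_i^{2/d_i}\mathcal{M}_i^{(d_i-2)/d_i}[n_i,0](v) = c_i(a_i - m_i|v|^2)_+^{(d_i-2)/2}.
\]
The change of variables $v = \sqrt{a_i/m_i}\,w$ maps the support onto the closed unit ball ${\rm B}_1$, absorbs a Jacobian $(a_i/m_i)^{n/2}$, and converts every polynomial moment against this weight into the form $c_i\,a_i^{(d_i-2)/2}(a_i/m_i)^{(n+k)/2}\int_{{\rm B}_1}(\text{polynomial in }w)(1-|w|^2)^{(d_i-2)/2}\,dw$. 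Each of these ball integrals falls precisely within the scope of Lemma \ref{lem_vm0} with $\alpha = (d_i-2)/2$, and the only remaining work is algebraic.

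For (i), I would apply Lemma \ref{lem_vm0}(i) directly. For (ii), I would apply Lemma \ref{lem_vm0}(ii), noting that the radial symmetry of the weight kills all off-diagonal entries and leaves a common scalar on the diagonal. For (iii), a parity argument in $w$ first reduces the rank-four tensor
\[
\int_{{\rm B}_1} w_p w_q w_a w_b (1-|w|^2)^{(d_i-2)/2}\,dw
\]
to a constant multiple of the isotropic rank-four tensor $\delta_{pq}\delta_{ab}+\delta_{pa}\delta_{qb}+\delta_{pb}\delta_{qa}$, with the common scalar read off from Lemma \ref{lem_vm0}(iv)--(v) (the fully-diagonal case produces the factor $3$, which is exactly the coincidence of the three delta-pairings). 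Contracting with $\partial_{x_p}u_q$ and summing over $p,q$ then immediately assembles $(\nabla_x\cdot u)\mathbb{I}_{n\times n}+\nabla_x u+(\nabla_x u)^t$, as asserted.

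To arrive at the stated scalar coefficients, I would rely on two structural identities built into the construction of $c_i$ and $d_i$ in Appendix \ref{app_deri}. First, $n+d_i = \tfrac{2}{\gamma_i-1}$ collapses the exponents $(n+d_i-2)/2$ and $(n+d_i+2)/2$, so that $a_i^{1/(\gamma_i-1)}$ reduces cleanly to a single power of $n_i$ up to $m_i$-factors. Second, the normalizations $\tfrac{c_i}{2}|\partial{\rm B}_1|B(n/2, d_i/2+1) = \bigl(\tfrac{2\gamma_i}{\gamma_i-1}\bigr)^{-1/(\gamma_i-1)}$ and its companion $\tfrac{c_i}{2n}|\partial{\rm B}_1|B(n/2+1, d_i/2+1) = \bigl(\tfrac{2\gamma_i}{\gamma_i-1}\bigr)^{-\gamma_i/(\gamma_i-1)}$ cancel $c_i$ against the corresponding beta factor exactly. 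For (iii) one additionally uses the $\Gamma$-function ratio $B(n/2+2, d_i/2)/B(n/2+1, d_i/2+1) = (n+2)/d_i$, which is where the prefactor $n/(n+2)$ from the lemma enters.

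The main obstacle is not conceptual but arithmetic: the cascade of cancellations of $c_i$, powers of $a_i$, and beta functions in (i) and (iii) must be organized carefully. A tidy alternative that may simplify the bookkeeping is to use the pointwise identity $c_i^{2/d_i}v_k\mathcal{M}_i^{(d_i-2)/d_i}[n_i,0] = -\tfrac{1}{d_i m_i}\partial_{v_k}\mathcal{M}_i[n_i,0]$, which follows directly from differentiating $\mathcal{M}_i$; integrating by parts against this identity converts the moments in (ii) and (iii) into the second moment of $\mathcal{M}_i[n_i,0]$ already computed in Lemma \ref{lem_ident}, and (i) follows from multiplying by $(a_i - m_i|v|^2)$ and using (ii). Either route should suffice once the normalization identities are invoked.
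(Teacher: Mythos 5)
Your main route (the change of variables $v\mapsto\sqrt{a_i/m_i}\,w$ onto the unit ball followed by Lemma \ref{lem_vm0}) is essentially identical to the paper's proof, and your integration-by-parts alternative via $c_i^{2/d_i}v_k\mathcal{M}_i^{(d_i-2)/d_i}[n_i,0]=-\tfrac{1}{d_im_i}\partial_{v_k}\mathcal{M}_i[n_i,0]$ is a cleaner route: it reduces (ii) to the zeroth moment and (iii) to the second moment of $\mathcal{M}_i[n_i,0]$ (Lemma \ref{lem_ident}), avoiding all sphere integrals and Beta-function bookkeeping. The identity is correct and integration by parts is legitimate since $\mathcal{M}_i$ is compactly supported and the boundary contribution vanishes for all $d_i>0$.

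However, you have not actually carried out the arithmetic, and had you done so you would have hit a genuine problem: both your main route and your integration-by-parts alternative yield, for the contracted fourth moment in (iii), the coefficient $\tfrac{1}{d_i}m_i^{\frac n2(\gamma_i-1)-2}n_i^{\gamma_i}$ multiplying $(\nabla_x\cdot u)\mathbb{I}_{n\times n}+\nabla_x u+(\nabla_x u)^t$, \emph{without} the prefactor $\tfrac{n\gamma_i}{n+2}$ appearing in the statement. Your integration-by-parts path makes this transparent: the rank-four tensor becomes $\tfrac{1}{d_im_i}\int(\delta_{pb}v_qv_a+\delta_{qb}v_pv_a+\delta_{ab}v_pv_q)\mathcal{M}_i\,dv=\tfrac{m_i^{\frac n2(\gamma_i-1)-2}}{d_i}n_i^{\gamma_i}(\delta_{pb}\delta_{qa}+\delta_{qb}\delta_{pa}+\delta_{ab}\delta_{pq})$, and contracting with $\partial_{x_p}u_q$ gives the coefficient above. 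Your cited Beta-function ratio $B(n/2+2,d_i/2)/B(n/2+1,d_i/2+1)=(n+2)/d_i$ is true but does not produce $n/(n+2)$; it is not "where the prefactor enters." A one-dimensional sanity check with $n=1$, $\gamma_i=3/2$, $m_i=n_i=1$ (so $d_i=3$, $a_i=6$, $c_i=\tfrac{2}{27\pi}$) gives $c_i\int v^4(6-v^2)_+^{1/2}\,dv=1$, whereas the stated formula predicts $\tfrac{1}{2}$, confirming the spurious factor. This traces back to an algebraic slip in the paper's own reduction of $B(n/2+2,d_i/2)$ (the paper writes $\tfrac{n^2}{d_i(n+d_i)}$ where the correct factor is $\tfrac{n(n+2)}{d_i(n+d_i+2)}$; the two coincide only when $d_i=0$). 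You should re-run your own alternative route to the end rather than assume it will land on the stated coefficient.
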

 \begin{proof}
Let us write $\mathcal{M}_i := \mathcal{M}_i[n_i, 0]$ for simplicity. Each identity below is derived using the integral formulas collected in Lemma \ref{lem_vm0}.

Using Lemma \ref{lem_vm0} (i), we first compute the zeroth-order moment as
\begin{align*}
c_i^{\frac{2}{d_i}} \int_{\mathbb{R}^n} \mathcal{M}_i^{\frac{d_i - 2}{d_i}}(v)\, dv 
&= c_i \int_{\mathbb{R}^n} \left( m_i^{\frac{n}{2}(\gamma_i - 1)} \frac{2\gamma_i}{\gamma_i - 1} n_i^{\gamma_i - 1} - m_i |v|^2 \right)_+^{\frac{d_i - 2}{2}} dv \\
&= \frac{c_i}{m_i^{\frac{n}{2}}} \left( m_i^{\frac{n}{2}(\gamma_i - 1)} \frac{2\gamma_i}{\gamma_i - 1} n_i^{\gamma_i - 1} \right)^{\frac{n + d_i - 2}{2}} \int_{{\rm B}_1} (1 - |v|^2)^{\frac{d_i - 2}{2}}\, dv \\
&= \frac{c_i}{m_i^{\frac{n}{2}(\gamma_i - 1)}} \left( \frac{2\gamma_i}{\gamma_i - 1} n_i^{\gamma_i - 1} \right)^{\frac{2 - \gamma_i}{\gamma_i - 1}} \cdot \frac{1}{2} |\pa {\rm B}_1| B\left( \frac{n}{2}, \frac{d_i}{2} \right).
\end{align*}
Using the known expressions:
\[
|\pa {\rm B}_1| = \frac{2\pi^{\frac{n}{2}}}{\Gamma\left(\frac{n}{2}\right)}, \quad
c_i = \left( \frac{2\gamma_i}{\gamma_i - 1} \right)^{-\frac{1}{\gamma_i - 1}} \frac{\Gamma\left( \frac{\gamma_i}{\gamma_i - 1} \right)}{\pi^{\frac{n}{2}} \Gamma\left( \frac{d_i}{2} + 1 \right)}, \quad B\left( \frac{n}{2}, \frac{d_i}{2} \right) = \frac{\Gamma\left(\frac{n}{2}\right)\Gamma\left(\frac{d_i}{2}\right)}{\Gamma\left(\frac{n + d_i}{2}\right)},
\]
we deduce
\[
c_i^{\frac{2}{d_i}} \int_{\mathbb{R}^n} \mathcal{M}_i^{\frac{d_i - 2}{d_i}}(v)\, dv = \frac{1}{d_i \gamma_i m_i^{\frac{n}{2}(\gamma_i - 1)}} n_i^{2 - \gamma_i}.
\]

Using Lemma \ref{lem_vm0} (ii), the isotropy of the integrand gives
\begin{align*}
c_i^{\frac{2}{d_i}} \int_{\mathbb{R}^n} (v \otimes v) \, \mathcal{M}_i^{\frac{d_i - 2}{d_i}}(v)\, dv 
&= \frac{c_i}{m_i^{\frac{n}{2} + 1}} \left( m_i^{\frac{n}{2}(\gamma_i - 1)} \frac{2\gamma_i}{\gamma_i - 1} n_i^{\gamma_i - 1} \right)^{\frac{n + d_i}{2}} \int_{{\rm B}_1} (v \otimes v) (1 - |v|^2)^{\frac{d_i - 2}{2}}\, dv \\
&= \frac{c_i}{m_i} \left( \frac{2\gamma_i}{\gamma_i - 1} n_i^{\gamma_i - 1} \right)^{\frac{1}{\gamma_i - 1}} \cdot \frac{1}{2n} |\pa {\rm B}_1| B\left( \frac{n}{2} + 1, \frac{d_i}{2} \right) \mathbb{I}_{n \times n}.
\end{align*}
Note that 
\[
B\left(\frac n2+1,\frac {d_i}{2}\right)=B\left(\frac n2,\frac {d_i}{2}\right)\frac{\frac n2}{\frac n2+\frac {d_i}{2}}=B\left(\frac n2,\frac {d_i}{2}+1\right)\frac{n}{d_i}=\frac{\Gamma(\frac n2)\Gamma(\frac {d_i}{2}+1)}{\Gamma(\frac{\gamma_i}{\gamma_i-1})}\frac{n}{d_i}.
\]
Substituting the constants, this simplifies to
\[
c_i^{\frac{2}{d_i}} \int_{\mathbb{R}^n} (v - u) \otimes (v - u) \mathcal{M}_i^{\frac{d_i - 2}{d_i}}[n_i, u](v)\, dv = \frac{n_i}{d_i m_i} \mathbb{I}_{n \times n}.
\]

 Finally, we compute the fourth-order moment. For this, we first consider
\[
\sum_{p,q=1}^n \int_{\mathbb{R}^n} v_p v_q (v \otimes v) (1 - |v|^2)^{\frac{d_i - 2}{2}}\, dv =: (A_{ab})_{n \times n}.
\]
For the diagonal entries $A_{aa}$, we calculate
\begin{align*}
A_{aa}&=\intr |v|^2 v_a^2   \left(1-|v|^2\right)^{\frac{d_i-2}{2}}_+\,dv   =\frac {1}{2n}|\pa {\rm B}_1|B\left(\frac{n}{2}+2,\frac {d_i}2 \right) =\frac{n}{2d_i(n+d_i)}|\pa {\rm B}_1|\frac{\Gamma(\frac n2)\Gamma(\frac {d_i}{2}+1)}{\Gamma(\frac{\gamma_i}{\gamma_i-1})}.
\end{align*}
For the off-diagonal entries $A_{ab}$ with $a \ne b$, we obtain
\begin{align}\label{ab}\begin{split}
A_{ab}&=2\intr v_a^2 v_b^2   \left(1-|v|^2\right)^{\frac{d_i-2}{2}}_+\,dv\cr 
&=2  \frac{1}{n(n+2)} |\partial \mathrm{B}_1| \, B\left(\frac{n}{2}+2, \frac{d_i}2 \right)\cr 
&=\frac{1}{n(n+2)}|\pa {\rm B}_1|\frac{\Gamma(\frac n2)\Gamma(\frac {d_i}{2}+1)}{\Gamma(\frac{\gamma_i}{\gamma_i-1})}\frac{n^2}{d_i(n+d_i)}\cr 
&=\frac{n}{d_i(n+2)(n+d_i)}|\pa {\rm B}_1|\frac{\Gamma(\frac n2)\Gamma(\frac {d_i}{2}+1)}{\Gamma(\frac{\gamma_i}{\gamma_i-1})}.
\end{split}\end{align}
We now multiply the fourth-order tensor by the prefactor
\[
\frac{c_i}{m_i^{\frac{n}{2} + 2}} \left( m_i^{\frac{n}{2}(\gamma_i - 1)} \frac{2\gamma_i}{\gamma_i - 1} n_i^{\gamma_i - 1} \right)^{\frac{n + d_i + 2}{2}},
\]
and simplify to obtain the final result:
\[
c_i^{\frac{2}{d_i}} \sum_{p,q=1}^n \int_{\mathbb{R}^n} (v_p - u_p)(v_q - u_q)(v - u) \otimes (v - u) \mathcal{M}_i^{\frac{d_i - 2}{d_i}}[n_i, u](v)\, dv = \frac{n \gamma_i}{d_i m_i^{2 - \frac{n}{2}(\gamma_i - 1)}} n_i^{\gamma_i} (B_{ab})_{n \times n},
\]
where the matrix $(B_{ab})$ is given by
\[
B_{ab} = \begin{cases}
1 & \text{if } a = b, \\
\frac{2}{n + 2} & \text{if } a \ne b.
\end{cases}
\]
Now define
$$
(C_{ab})_{n \times n}:=\sum_{p,q=1}^n\pa_{x_p}u_q\intr v_p v_q (v\otimes v) \left(1-|v |^2\right)^{\frac{d_i-2}{2}}_+  dv.
$$
For the diagonal entries $C_{aa}$, we write
\begin{align*}
C_{aa}&=\sum_{p,q=1}^n\pa_{x_p}u_q\intr v_p v_q v_a^2 \left(1-|v |^2\right)^{\frac{d_i-2}{2}}_+  dv\cr 
&=\sum_{p=1}^n\pa_{x_p}u_p\intr v_p^2 v_a^2 \left(1-|v |^2\right)^{\frac{d_i-2}{2}}_+  dv\cr 
&=\sum_{p\neq a}^n\pa_{x_p}u_p\intr v_p^2 v_a^2 \left(1-|v |^2\right)^{\frac{d_i-2}{2}}_+  dv+\pa_{x_a}u_a\intr  v_a^4 \left(1-|v |^2\right)^{\frac{d_i-2}{2}}_+  dv\cr 
&=\frac{1}{2n(n+2)} |\partial \mathrm{B}_1| \, B\left(\frac{n}{2}+2, \frac{d_i}{2}\right) \sum_{p\neq a}^n\pa_{x_p}u_p + \frac{3}{2n(n+2)} |\partial \mathrm{B}_1| \, B\left(\frac{n}{2}+2, \frac{d_i}{2}\right)\pa_{x_a}u_a\cr 
&=\frac{1}{2n(n+2)} |\partial \mathrm{B}_1| \, B\left(\frac{n}{2}+2, \frac{d_i}{2}\right) \lt( \nabla_x \cdot u + 2 \pa_{x_a} u_a \rt)\cr 
&=\frac 12\frac{n}{d_i(n+2)(n+d_i)}|\pa {\rm B}_1|\frac{\Gamma(\frac n2)\Gamma(\frac {d_i}{2}+1)}{\Gamma(\frac{\gamma_i}{\gamma_i-1})} \lt( \nabla_x \cdot u + 2 \pa_{x_a} u_a \rt),
\end{align*}
where we used \eqref{ab} and Lemma \ref{lem_vm0} (iv) and (v). For the off-diagonal entries $C_{ab}$, $a \ne b$, we obtain
\begin{align*}
C_{ab}&=\sum_{p,q=1}^n\pa_{x_p}u_q\intr v_p v_q v_av_b \left(1-|v |^2\right)^{\frac{d_i-2}{2}}_+  dv\cr 
&=(\pa_{x_a}u_b)\intr v_a^2 v_b^2 \left(1-|v |^2\right)^{\frac{d_i-2}{2}}_+ dv+(\pa_{x_b}u_a)\intr v_a^2 v_b^2 \left(1-|v |^2\right)^{\frac{d_i-2}{2}}_+  dv\cr 
&=\frac 12\frac{n}{d_i(n+2)(n+d_i)}|\pa {\rm B}_1|\frac{\Gamma(\frac n2)\Gamma(\frac {d_i}{2}+1)}{\Gamma(\frac{\gamma_i}{\gamma_i-1})}(\pa_{a}u_b+\pa_b u_a ).
\end{align*}
This completes the proof.
\end{proof}

%%%%%%%%%%%%%%%%%%%%%%%%%%%%%%%%%%%%%%%%%%%%%%%%%%%%%%%%%%%
%
%
%
%
%
%
%
%
%
%
%
%
%
%%%%%%%%%%%%%%%%%%%%%%%%%%%%%%%%%%%%%%%%%%%%%%%%%%%%%%%%%%%

\section{Global existence of mild solutions}\label{app_gwp}

In this appendix, we prove the global-in-time existence of mild solutions to the kinetic BGK-type system \eqref{main}, under suitable assumptions on the initial data. The constructed solutions also satisfy a kinetic entropy inequality. For simplicity, we fix the collision frequencies to unity: $\nu_1 = \nu_2 = 1$. Note that the parameter $\e > 0$ used in this section refers to a regularization parameter introduced later, and should not be confused with the Knudsen number used in the Chapman--Enskog expansion. For simplicity, we write $f := (f_1, f_2)$ and denote by $\calM_i[f] := \calM_i[n_i, \bf u]$ the local equilibrium associated with species $i$, where $n_i = \int_{\R^n} f_i\,dv$ and the bulk velocity $\bf u$ is defined as in \eqref{total}. For clarity of presentation, we restrict to the case $\Omega = \mathbb{R}^n$. The case $\Omega = \mathbb{T}^n$ is in fact simpler due to the compactness of the spatial domain.

\begin{theorem} \label{thm: exist}
    Let $T>0$ be given, and let the initial data ${}^{\rm in}f_i:\R^n \times \R^n\to [0,\infty)$, for $i=1,2$, satisfy the finite entropy and mass condition
    \begin{equation}\label{eq: hyp in}
        \iint_{\R^n \times \R^n} H({}^{\rm in}f,v) + {}^{\rm in}f_1 + {}^{\rm in}f_2 \, dxdv < +\infty.
    \end{equation}
    Then there exists a global-in-time weak solution $f = (f_1, f_2)$ to the BGK equation \eqref{main} which satisfies the Duhamel-type mild formulation:
    \begin{align*}
        f_i(t,x,v) = e^{-t} \cdot {}^{\rm in}f_i(x-vt,v) + \int_0^t e^{s-t} \calM_i[f](s,x-v(t-s),v)\,ds.
    \end{align*}
    for almost every $(t,x,v)\in (0,\infty)\times\R^n \times \R^n$ and for $i=1,2$.
    
    Moreover, the solution satisfies  
    \begin{align}\label{eq: prop f}
        f = (f_1, f_2) \in L^\infty((0,\infty);L^1_2 \cap L^{1+2/d_1} (\R^n \times \R^n)) \times L^\infty((0,\infty);L^1_2 \cap L^{1+2/d_2} (\R^n \times \R^n)),
    \end{align}
    and the kinetic entropy inequality
    \begin{align*}
        \iint_{\R^n \times \R^n} h_i(f_i,v)\,dxdv + \int_0^t \iint_{\R^n \times \R^n} h_i(\calM_i[f],v)\,dxdvds \le \iint_{\R^n \times \R^n} h_i({}^{\rm in}f_i,v)\,dxdv.
    \end{align*}
\end{theorem}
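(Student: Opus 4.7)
The plan is to construct the solution as a limit of approximate solutions built via a Banach fixed-point argument for a regularized version of the BGK system, and then pass to the limit using uniform bounds obtained from mass, moment, and entropy estimates, in the spirit of the analysis carried out for the single-species gas-dynamics BGK model.

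First, I would introduce a regularization $\calM_i^\delta[f]$ that removes the singularities inherent in the definition of the common bulk velocity $\mathbf{u}$ and in the moment-dependent support of $\calM_i$. Specifically, I would replace $n_i$ in $\calM_i$ by a mollified version $n_i \ast \varrho_\delta$, add $\delta$ to the denominator $\sum_i \rho_i$ appearing in the definition of $\mathbf{u}$, and truncate the resulting velocity by $|\mathbf{u}| \le 1/\delta$. With these modifications, the map $f \mapsto \calM_i^\delta[f]$ is globally Lipschitz from $L^1(\R^n \times \R^n)$ into itself, so the Duhamel operator
\begin{equation*}
T_i^\delta[f](t,x,v) := e^{-t}\, {}^{\rm in} f_i(x-vt,v) + \int_0^t e^{s-t}\, \calM_i^\delta[f](s, x-v(t-s), v)\,ds
\end{equation*}
is a contraction on $C([0,\tau]; L^1(\R^n \times \R^n))^2$ for $\tau>0$ small enough, producing a unique nonnegative mild solution $f^\delta$ to the regularized problem locally in time, and by iteration on the time variable, globally in time.

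Next I would derive bounds uniform in $\delta$. Mass is conserved via integration against $1$. For the entropy, repeating the computation in Section~\ref{sec_pre}(iii), i.e., multiplying the equation by the appropriate power of $f_i^\delta$, integrating, and applying Young's inequality, yields
\begin{equation*}
\iint_{\R^n \times \R^n} h_i(f_i^\delta(t),v)\, dx\,dv + \int_0^t \iint_{\R^n \times \R^n} \bigl(h_i(f_i^\delta,v)-h_i(\calM_i^\delta[f^\delta],v)\bigr)\,dx\,dv\,ds \le \iint_{\R^n \times \R^n} h_i({}^{\rm in} f_i, v)\,dx\,dv,
\end{equation*}
which, by the form of $h_i$, controls the second moment $\iint |v|^2 f_i^\delta\,dx\,dv$ and the $L^{1+2/d_i}$ norm of $f_i^\delta$, uniformly in $t$ and in $\delta$. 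This yields the regularity claimed in \eqref{eq: prop f}.

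Finally, I would pass to the limit $\delta \to 0$. The uniform second-moment and $L^{1+2/d_i}$ bounds allow a velocity-averaging lemma of Golse--Lions--Perthame--Sentis type, applied to the transport equation satisfied by $f_i^\delta$, to deliver strong $L^1_{\rm loc}$ compactness of the macroscopic moments $n_i^\delta$ and $n_i^\delta u_i^\delta$. Since $\calM_i[n_i, \mathbf{u}]$ depends continuously on these moments, this yields $\calM_i^\delta[f^\delta] \to \calM_i[f]$ almost everywhere, so that the Duhamel formula passes to the limit. The entropy inequality is preserved by weak lower semicontinuity of the convex functional $\iint h_i(\cdot, v)\,dx\,dv$, while Remark~\ref{rem: each h} (the individual entropy-minimization property of $\calM_i$) guarantees that the dissipation term has the correct sign even in the limit.

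The main obstacle is precisely the strong convergence of the moments needed to pass to the limit in $\calM_i^\delta[f^\delta]$: weak convergence of $f_i^\delta$ alone is insufficient because $\calM_i$ depends nonlinearly and non-smoothly (through the positive-part cutoff and through the quotient defining $\mathbf{u}$) on $n_i$ and $\mathbf u$. Velocity averaging is therefore essential. A secondary difficulty is handling the potential degeneracy $\rho^\delta \to 0$ in the denominator of $\mathbf{u}$, which is resolved by the regularization $\rho^\delta+\delta$ together with the fact that $\nu_i \rho_i (\mathbf{u}-u_i)$ vanishes on the set $\{\rho = 0\}$, so the limit is well-defined almost everywhere.
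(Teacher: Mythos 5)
Your proposal matches the paper's strategy: regularize the macroscopic quantities so that the relaxation map becomes $L^1$-Lipschitz, solve the regularized system by Picard iteration, extract uniform (in the regularization parameter) mass, second-moment, and $L^{1+2/d_i}$ bounds from the entropy structure, and pass to the limit via a velocity-averaging lemma giving strong $L^1_{\rm loc}$ compactness of the moments $n_i$ and $n_iu_i$, treating the vacuum set separately. The paper implements the regularization differently (it replaces $n_i$ by the saturated quantity $n_i/(1+\ve n_i)$ and modifies the common velocity by adding $\ve(1+|\rho_1u_1|+|\rho_2u_2|)$ to the denominator, rather than mollifying $n_i$ in $x$ and truncating $\mathbf u$), and the global Lipschitz property of $f\mapsto\calM_i[f]$ that you assert in one line is in fact the key technical estimate, which the paper imports from \cite[Lemma~1.9]{KS24} rather than proving from scratch -- but otherwise the two arguments coincide.
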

\begin{remark}
Once the mild formulation holds, one can verify that $f_i$ satisfies the BGK system \eqref{main} in the sense of distributions. See \cite[Appendix C]{KS24} for details.
\end{remark}

\begin{remark}
The proof presented below addresses the case $\gamma_1, \gamma_2 \in (1, \frac{n+2}{n})$, where the truncated Maxwellians are unbounded but integrable. The endpoint case $\gamma_i = \frac{n+2}{n}$ can be handled by similar, and in fact simpler, arguments due to the boundedness of $\mathcal{M}_i$ in that regime.
\end{remark}
%%%%%%%%%%%%%%%%%%%%%%%%%%%%%%%%%%%%%%%%%%%%%%%%%%%%%%%%%%%
%
%
%
%
%
%
%
%
%
%
%
%
%
%%%%%%%%%%%%%%%%%%%%%%%%%%%%%%%%%%%%%%%%%%%%%%%%%%%%%%%%%%%
\subsection{Regularization and linearization}
To establish global existence, we first construct a regularized version of the BGK model \eqref{main}. Specifically, we define the regularized macroscopic quantities:
\begin{align*}
    n^{(\ve)}_{f_1} := \frac{n_{f_1}}{1+\ve n_{f_1}}, \quad {\bf{u}}^{(\ve)}_{f_1,f_2} := \frac{\rho_{f_1}{\bf{u}}_{f_1} + \rho_{f_2}{\bf{u}}_{f_2}}{\rho_{f_1} + \rho_{f_2} + \ve(1+ |\rho_{f_1}{\bf{u}}_{f_1}| + |\rho_{f_2}{\bf{u}}_{f_2}| )},
\end{align*}
and consider the following regularized BGK-type system:
\begin{align}\label{eq: rbgk}
    \begin{cases}
        \p_t f^{\ve}_1 + v\cdot \nabla_x f^{\ve}_1 = \calM_1^{(\ve)}[f^\ve] - f_1^\ve,\\
        \p_t f^{\ve}_2 + v\cdot \nabla_x f^{\ve}_2 = \calM_2^{(\ve)}[f^\ve] - f_2^\ve,\\
        f^\ve_1(0,\cdot,\cdot) = {}^{\rm in}f_1(\cdot,\cdot), \\
        f^\ve_2(0,\cdot,\cdot) = {}^{\rm in}f_2(\cdot,\cdot),\\
        \calM_i^{(\ve)}[f^\ve] := c_i\left(\frac{2\gamma_i}{\gamma_i - 1} m_i^{\frac{n}{2}(\gamma_i-1)} (n_{f_i^\ve}^{(\ve)})^{\gamma_i-1} - m_i\left|v- {\bf{u}}^{(\ve)}_{f_1^\ve,f_2^\ve} \right|^2 \right)_+^{d_i/2}.
    \end{cases}
\end{align}
Notice that the regularized Maxwellian can be rewritten as
\begin{align*}
    \calM_i^{(\ve)}[(f_1,f_2)] = c_i m_i^{d_i/2} \left(\frac{2\gamma_i}{\gamma_i - 1} m_i^{\frac{n}{2}(\gamma_i - 1) - 1} (n_{f_i}^{(\ve)})^{\gamma_i-1} - \left|v- {\bf{u}}^{(\ve)}_{f_1,f_2}\right|^2 \right)_+^{d_i/2}.
\end{align*}

To justify the existence of the regularized system, we rely on the stability properties of the truncated Maxwellian. In particular, the following estimate from \cite[Lemma 1.9]{KS24} plays a central role. Let us denote
\[
L^1_2(\R^n \times \R^n) = L^1(\R^n \times \R^n,(1+|v|^2)dxdv)
\]
Then for any pair of functions $(f_1, f_2), (g_1, g_2) \in (L^1_2(\R^n \times \R^n))^2$, the regularized Maxwellian satisfies the following stability estimate:
\begin{equation}\label{eq: stab1}
\begin{split}
    &\int_{\R^n} (1+|v|^2)\lt(\calM^{(\ve)}_i[(f_1,f_2)] - \calM^{(\ve)}_i[(g_1,g_2)] \rt) dv  \le C_{\ve,m_i,\gamma_i} \left(|n_{f_i}^{(\ve)} - n_{g_i}^{(\ve)}| + |{\bf{u}}_{f_1,f_2}^{(\ve)} - {\bf{u}}_{g_1,g_2}^{(\ve)}| \right).
    \end{split}
\end{equation}
To make effective use of this estimate, we analyze the difference of the regularized velocities ${\bf{u}}_{f_1,f_2}^{(\ve)}$ and ${\bf{u}}_{g_1,g_2}^{(\ve)}$. A direct computation of the difference yields:
\begin{align*}
   {\bf{u}}_{f_1,f_2}^{(\ve)} - {\bf{u}}_{g_1,g_2}^{(\ve)} 
    &= \frac{(\rho_{f_1}{\bf{u}}_{f_1} + \rho_{f_2}{\bf{u}}_{f_2})\Big(\rho_{g_1} + \rho_{g_2}+\ve(1+|\rho_{g_1}{\bf{u}}_{g_1}| + |\rho_{g_2}{\bf{u}}_{g_2}|)\Big)}{(\rho_{f_1}+\rho_{f_2} + \ve(1+|\rho_{f_1}{\bf{u}}_{f_1}| + |\rho_{f_2}{\bf{u}}_{f_2}|) (\rho_{g_1}+\rho_{g_2}+\ve(1+|\rho_{g_1}{\bf{u}}_{g_1}| + |\rho_{g_2}{\bf{u}}_{g_2}|))} \\
    &\quad - \frac{(\rho_{g_1}{\bf{u}}_{g_1} + \rho_{g_2}{\bf{u}}_{g_2})(\rho_{f_1}+\rho_{f_2} + \ve(1+|\rho_{f_1}{\bf{u}}_{f_1}| + |\rho_{f_2}{\bf{u}}_{f_2}|)}{(\rho_{f_1}+\rho_{f_2} + \ve(1+|\rho_{f_1}{\bf{u}}_{f_1}| + |\rho_{f_2}{\bf{u}}_{f_2}|) (\rho_{g_1}+\rho_{g_2}+\ve(1+|\rho_{g_1}{\bf{u}}_{g_1}| + |\rho_{g_2}{\bf{u}}_{g_2}|))}.
\end{align*}
Since both expressions share a common denominator, their difference reduces to the difference of the numerators. We group the resulting terms as follows:
\begin{align*}
    &J_1 := (\rho_{f_1} {\bf{u}}_{f_1}) \rho_{g_1} - (\rho_{g_1} {\bf{u}}_{g_1}) \rho_{f_1}, \quad J_2 := (\rho_{f_1} {\bf{u}}_{f_1}) \rho_{g_2} - (\rho_{g_1} {\bf{u}}_{g_1}) \rho_{f_2},\\
    &J_3 := (\rho_{f_2}{\bf{u}}_{f_2}) \rho_{g_1} - (\rho_{g_2}{\bf{u}}_{g_2} )\rho_{f_1}, \quad J_4 := (\rho_{f_2}{\bf{u}}_{f_2})\rho_{g_2} - (\rho_{g_2}{\bf{u}}_{g_2})\rho_{f_2},\\
    &J_5 := (\rho_{f_1}{\bf{u}}_{f_1})\ve(1+|\rho_{g_1}{\bf{u}}_{g_1}|+|\rho_{g_2}{\bf{u}}_{g_2}|)  - (\rho_{g_1}{\bf{u}}_{g_1})\ve(1+|\rho_{f_1}{\bf{u}}_{f_1}|+|\rho_{f_2}{\bf{u}}_{f_2}|) ,\\
    &J_6 := (\rho_{f_2}{\bf{u}}_{f_2})\ve(1+|\rho_{g_1}{\bf{u}}_{g_1}|+|\rho_{g_2}{\bf{u}}_{g_2}|) - (\rho_{g_2}{\bf{u}}_{g_2})\ve(1+|\rho_{f_1}{\bf{u}}_{f_1}|+|\rho_{f_2}{\bf{u}}_{f_2}|) .
\end{align*}
Then, we make the following quick observations:\\
$\bullet$ Estimate of $J_1$: Subtracting and adding $ (\rho_{g_1})^2 {\bf{u}}_{g_1}$ gives
\begin{align*}
    J_1 = (\rho_{f_1}{\bf{u}}_{f_1} - \rho_{g_1}{\bf{u}}_{g_1})\rho_{g_1} + (\rho_{g_1}{\bf{u}}_{g_1}) (\rho_{g_1} - \rho_{f_1}).
\end{align*}
$\bullet$ Estimate of $J_2$: Similarly, we find
\begin{align*}
    J_2 &= (\rho_{f_1} {\bf{u}}_{f_1} - \rho_{g_1} {\bf{u}}_{g_1}) \rho_{g_2} +  (\rho_{g_1} {\bf{u}}_{g_1})(\rho_{g_2} - \rho_{f_2})  .
\end{align*}
$\bullet$ Estimates of $J_3,J_4$: In the same way,
\begin{align*}
    &J_3 =(\rho_{f_2} {\bf{u}}_{f_2} - \rho_{g_2} {\bf{u}}_{g_2}) \rho_{g_1} +  (\rho_{g_2} {\bf{u}}_{g_2}) (\rho_{g_1} - \rho_{f_1}),\\
    &J_4 = (\rho_{f_2} {\bf{u}}_{f_2} - \rho_{g_2} {\bf{u}}_{g_2}) \rho_{g_2} + (\rho_{g_2} {\bf{u}}_{g_2})(\rho_{g_2} - \rho_{f_2}).
\end{align*}
$\bullet$ Estimate of $J_5$: Subtracting and adding $\ve  (\rho_{g_1}{\bf{u}}_{g_1})(1+|\rho_{g_1}{\bf{u}}_{g_1}| + |\rho_{g_2}{\bf{u}}_{g_2}|)$, we obtain
\begin{align*}
    J_5 &= \ve (\rho_{f_1}{\bf{u}}_{f_1} - \rho_{g_1}{\bf{u}}_{g_1}) (1+|\rho_{g_1}{\bf{u}}_{g_1}|+|\rho_{g_2}{\bf{u}}_{g_2}|)  + \ve  (\rho_{g_1}{\bf{u}}_{g_1}) (|\rho_{g_1}{\bf{u}}_{g_1}| - |\rho_{f_1}{\bf{u}}_{f_1}| + |\rho_{g_2}{\bf{u}}_{g_2}| - |\rho_{f_2}{\bf{u}}_{f_2}|) \\
    &\le \ve  |\rho_{f_1} {\bf{u}}_{f_1} - \rho_{g_1}{\bf{u}}_{g_1}| (1+|\rho_{g_1}{\bf{u}}_{g_1}|+|\rho_{g_2}{\bf{u}}_{g_2}|)  + \ve  |\rho_{g_1} {\bf{u}}_{g_1}| |\rho_{g_1}{\bf{u}}_{g_1} - \rho_{f_1}{\bf{u}}_{f_1}| + \ve  |\rho_{g_1} {\bf{u}}_{g_1}| |\rho_{g_2}{\bf{u}}_{g_2} - \rho_{f_2}{\bf{u}}_{f_2}| .
\end{align*}
$\bullet$ Estimate of $J_6$: In the same way,
\begin{align*}
    J_6 &= \ve   (\rho_{f_2}{\bf{u}}_{f_2} - \rho_{g_2}{\bf{u}}_{g_2}) (1+|\rho_{g_1}{\bf{u}}_{g_1}| + |\rho_{g_2} {\bf{u}}_{g_2}|)   + \ve  (\rho_{g_2} {\bf{u}}_{g_2}) (|\rho_{g_1}{\bf{u}}_{g_1}| - |\rho_{f_1}{\bf{u}}_{f_1}| + |\rho_{g_2}{\bf{u}}_{g_2}| - |\rho_{f_2} {\bf{u}}_{f_2}|) \\
    &\le \ve  |\rho_{f_2}{\bf{u}}_{f_2} - \rho_{g_2}{\bf{u}}_{g_2}| (1 + |\rho_{g_1}{\bf{u}}_{g_1}| + |\rho_{g_2}{\bf{u}}_{g_2}|)  + \ve  |\rho_{g_2} {\bf{u}}_{g_2}| |\rho_{g_1}{\bf{u}}_{g_1} - \rho_{f_1}{\bf{u}}_{f_1}| + \ve  |\rho_{g_2}{\bf{u}}_{g_2}| |\rho_{g_2}{\bf{u}}_{g_2} - \rho_{f_2}{\bf{u}}_{f_2}|.
\end{align*}
Now taking the denominator into effect, we deduce that
\begin{equation*}% \label{eq: ustab}
\begin{split}
    |{\bf{u}}_{f_1,f_2}^{(\ve)} - {\bf{u}}_{g_1,g_2}^{(\ve)}| &\lesssim_{m_1,m_2} \frac{|n_{f_1}{\bf{u}}_{f_1} - n_{g_1}{\bf{u}}_{g_1}| + |n_{f_2}{\bf{u}}_{f_2} - n_{g_2}{\bf{u}}_{g_2}|}{\ve} + \frac{|n_{f_1}-n_{g_1}| + |n_{f_2}-n_{g_2}|}{\ve^2}\\
    &\lesssim \left(\frac{1}{\ve}+\frac{1}{\ve^2}\right) \int_{\R^n} (1+|v|^2)\Big(|f_1 - g_1| + |f_2 - g_2| \Big) dv.
\end{split}
\end{equation*}
Finally, we also control the difference in the regularized densities:
\begin{align*}
    |n_{f_i}^{(\ve)} - n_{g_i}^{(\ve)}| &= \left|\frac{n_{f_i} - n_{g_i}}{(1+\ve n_{f_i})(1+\ve n_{g_i})} \right| \le |n_{f_i} - n_{g_i}| \le \int_{\R^n} (1+|v|^2) |f_i-g_i| \,dv.
\end{align*}
Combining the estimates above, we conclude from \eqref{eq: stab1} that
\begin{equation}\label{eq: stabimp}
\begin{split}
 &\iint_{\R^n \times \R^n} (1+|v|^2)(\calM^{(\ve)}_i[f] - \calM^{(\ve)}_i[g]) \,dvdx  \cr
 &\quad \le C_{\ve,m_1,m_2,\gamma_i} \iint_{\R^n \times \R^n} (1+|v|^2) (|f_1 - g_1| + |f_2 - g_2|) \, dvdx.
    \end{split}
\end{equation}
By using this stability estimate for the regularized Maxwellian, we now construct mild solutions to the regularized system \eqref{eq: rbgk} in the following lemma.
\begin{lemma} \label{lem: regbgk}
    Assume \eqref{eq: hyp in}. Then for each $\ve>0$, the Cauchy problem for the regularized system \eqref{eq: rbgk} admits a mild solution. That is, for almost every $(t,x,v)\in [0,\infty)\times\R^n \times \R^n$, we have
    \begin{align}\label{eq: reg mild}
        f_i^\ve(t,x,v) = e^{-t}\cdot {}^{\rm in}f_i(x-vt,v) + \int_0^t e^{s-t}\calM_i^{(\ve)}[f^\ve](s,x-v(t-s),v)\,ds.
    \end{align}
    Moreover
    \begin{align*}
        (f_1^\ve,f_2^\ve) \in \prod_{i=1}^2 L^\infty([0,\infty);L^1_2 \cap L^{1+2/d_i} (\R^n \times \R^n)),
    \end{align*}
    and the following uniform estimates hold:
    \begin{align*}
        \sup_{t\ge 0}\iint_{\R^n \times \R^n} f_i^\ve + h_i(f_i^\ve,v) \, dvdx \le C({}^{\rm in}f_i).
    \end{align*}
\end{lemma}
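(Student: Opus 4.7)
The plan is to construct a mild solution to the regularized system \eqref{eq: rbgk} via a Banach fixed-point argument on $E_T := C([0,T];(L^1_2(\R^n\times\R^n))^2)$, exploiting the Lipschitz-type stability \eqref{eq: stabimp} of the regularized Maxwellian $\calM_i^{(\ve)}$. Define the solution map
\[
\Phi_i(g)(t,x,v) := e^{-t}\,{}^{\rm in}f_i(x-vt,v) + \int_0^t e^{s-t}\calM_i^{(\ve)}[g](s,x-v(t-s),v)\,ds,
\]
which sends $E_T$ into itself since $\calM_i^{(\ve)}[g]\in L^\infty([0,T];L^1_2)$ whenever $g\in E_T$. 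Using \eqref{eq: stabimp} together with the Bielecki-weighted norm $\|g\|_\lambda := \sup_{t\in[0,T]}e^{-\lambda t}\|g(t)\|_{L^1_2}$, a short calculation yields $\|\Phi(g)-\Phi(h)\|_\lambda \le \frac{C_\ve}{1+\lambda}\|g-h\|_\lambda$, so for $\lambda$ sufficiently large $\Phi$ is a strict contraction on every $[0,T]$. The Banach fixed-point theorem then produces a unique $f^\ve\in E_T$ satisfying \eqref{eq: reg mild}; concatenating intervals yields a global-in-time mild solution.

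The second step is to propagate uniform-in-$t$ bounds on mass, kinetic entropy, and $L^{1+2/d_i}$ norm. Integrating \eqref{eq: rbgk} in $(x,v)$ together with the monotonicity $n^{(\ve)}_{f_i^\ve}\le n_{f_i^\ve}$ renders the source non-positive, hence $\|f_i^\ve(t)\|_{L^1}\le\|{}^{\rm in}f_i\|_{L^1}$. For the entropy, multiplying the equation by $\partial_{f_i}h_i(f_i^\ve,v)$ and using convexity of $h_i$ in its first argument gives
\[
\frac{d}{dt}\iint h_i(f_i^\ve,v)\,dvdx \le \iint h_i(\calM_i^{(\ve)}[f^\ve],v)\,dvdx-\iint h_i(f_i^\ve,v)\,dvdx,
\]
so the scheme closes provided one shows $\iint h_i(\calM_i^{(\ve)}[f^\ve],v)\,dvdx \le \iint h_i(f_i^\ve,v)\,dvdx$. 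This in turn delivers the monotonicity $\iint h_i(f_i^\ve(t),v)\,dvdx \le \iint h_i({}^{\rm in}f_i,v)\,dvdx$, and combined with the pointwise bound $h_i(f,v)\ge c\,f^{1+2/d_i}$ yields both the regularity \eqref{eq: prop f} and the stated uniform estimate.

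The main obstacle is that $\calM_i^{(\ve)}$ is not itself a constrained entropy minimizer, so Remark \ref{rem: each h} does not apply directly at the regularized level. The resolution is a two-step comparison
\[
\iint h_i(\calM_i^{(\ve)}[f^\ve],v)\,dvdx \le \iint h_i(\calM_i[f^\ve],v)\,dvdx \le \iint h_i(f_i^\ve,v)\,dvdx.
\]
The first inequality follows from the explicit formula in Lemma \ref{lem_ident} together with the monotonicities $n^{(\ve)}_{f_i^\ve}\le n_{f_i^\ve}$ and $|{\bf{u}}^{(\ve)}_{f_1^\ve,f_2^\ve}|\le|{\bf{u}}|$, both of which are built into the regularization (the latter since the regularization only enlarges the denominator defining the common bulk velocity); the second is the decoupled entropy minimization of Remark \ref{rem: each h} applied to the unregularized Maxwellian $\calM_i[f^\ve]$. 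With this comparison in hand, the a priori bounds become uniform in $t$ and the local fixed-point solution extends to $[0,\infty)$ in the claimed function space.
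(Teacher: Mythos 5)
Your construction is essentially the same as the paper's: they run a Picard iteration $f^{k+1}\mapsto f^{k}$ and show it is Cauchy in $L^\infty_{\rm loc}((0,\infty);L^1_2)$ via \eqref{eq: stabimp}, which is just the fixed-point theorem unrolled, so the Bielecki-norm contraction adds nothing beyond presentation. Your two-step comparison
\[
\iint h_i(\calM_i^{(\ve)}[f],v)\,dvdx \le \iint h_i(\calM_i[f],v)\,dvdx \le \iint h_i(f_i,v)\,dvdx
\]
is correct and is in fact the hidden ingredient behind the paper's one-line entropy balance $\frac{d}{dt}\iint h_i(f_i^{k+1},v) + \iint h_i(f_i^{k+1},v) \le \iint h_i(f_i^{k},v)$ (they delegate it to \cite{CH24,KS24}); verifying $|{\bf{u}}^{(\ve)}|\le|{\bf{u}}|$ and $n^{(\ve)}\le n$ and invoking Lemma \ref{lem_ident} and Remark \ref{rem: each h} is exactly the right reasoning. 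The one place where your ordering creates friction that the paper's does not: you first produce the fixed point in $C([0,T];(L^1_2)^2)$ and only afterwards try to run the entropy estimate, but that estimate requires $\iint h_i(f_i^\ve,v)\,dvdx<\infty$, i.e.\ $f_i^\ve\in L^{1+2/d_i}$, which is not part of your fixed-point space — and you cannot bootstrap it from $h_i\gtrsim f^{1+2/d_i}$ without already knowing finiteness of the entropy, so the argument as written is circular. The paper sidesteps this by propagating the entropy bound inductively through the linear iterates, so that the right-hand side $\iint h_i(f_i^k,v)$ is finite by the inductive hypothesis at each stage, and then passes it to the limit by lower semicontinuity. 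You can repair your version either by enlarging the fixed-point space to $C([0,T];(L^1_2\cap L^{1+2/d_1})\times(L^1_2\cap L^{1+2/d_2}))$ (noting $\calM_i^{(\ve)}[g]$ is uniformly in $L^\infty$ because $n^{(\ve)}\le\ve^{-1}$ and $|{\bf{u}}^{(\ve)}|\le\ve^{-1}$, hence in $L^{1+2/d_i}$ by interpolation with its $L^1$ bound), or by reading the $L^{1+2/d_i}$ regularity directly off the Duhamel formula before invoking the entropy balance — but as stated there is a gap.
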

\begin{proof}
We construct a sequence of approximations via the standard iterative scheme. For $k \in \mathbb{N}$, define $(f_1^{k+1},f_2^{k+1})$ by solving the linearized problem:
\[%\begin{equation} \label{eq: iter}
    \begin{cases}
        \p_t f_1^{k+1} + v\cdot \nabla_x f_1^{k+1} = \calM^{(\ve)}_1[f^k] - f_1^{k+1},\\
        \p_t f_2^{k+1} + v \cdot \nabla_x f_2^{k+1} = \calM^{(\ve)}_2[f^k] - f_2^{k+1},\\
        \calM_i[f^k] := c_i \left(\frac{2\gamma_i}{\gamma_i - 1} m_i^{\frac{n}{2}(\gamma_i - 1)} \left(n_{f_i^k}^{(\ve)}\right)^{\gamma_i - 1} - m_i\left|v - {\bf{u}}_{f_1^k,f_2^k}^{(\ve)}\right|^2 \right)_+^{d_i/2},\\
        f_i^0(t,x,v) = {}^{\rm in}f_i(x,v)\quad \forall (t,x,v)\in [0,\infty)\times\R^n \times \R^n,\\
        f_i^k(0,x,v) = {}^{\rm in}f_i(x,v) \quad \forall k\in \mathbb{N}\text{ and }(x,v)\in \R^n \times \R^n.
    \end{cases}
\]%\end{equation}
For each $k \in \N$, this is a linear inhomogeneous transport equation with an explicitly given source term, and classical theory ensures the existence of a unique mild solution.  In particular, each $f_i^k$ satisfies the mild formula
\begin{align*}
    f_i^{k+1}(t,x,v) = e^{-t}\cdot {}^{\rm in}f_i(x-vt,v) + \int_0^t e^{s-t}\calM_i[f^k](s,x-v(t-s),v)\,ds.
\end{align*}
Although the $f_i^{k+1}$ also depends on the parameter $\ve>0$, we have omitted this dependence for the sake of brevity. From conservation and entropy balance laws (see \cite{CH24,KS24}), we obtain for each $i=1,2$:
\begin{align*}
    &\frac{d}{dt}\iint_{\R^n \times \R^n} (f_1^{k+1} + f_2^{k+1}) \,dxdv + \iint_{\R^n \times \R^n} (f_1^{k+1} + f_2^{k+1}) \,dxdv \le \iint_{\R^n \times \R^n} (f_1^k + f_2^k) \,dxdv ,\\
    &\frac{d}{dt}\iint_{\R^n \times \R^n} h_i(f_i^{k+1},v)\, dx dv + \iint_{\R^n \times \R^n} h_i(f_i^{k+1},v) dxdv \le \iint_{\R^n \times \R^n} h_i(f_i^{k},v) \,dxdv.
\end{align*}
By an inductive argument (see \cite[Lemma 3.3]{KS24}), this shows
\begin{equation}\label{eq: unif}
\begin{split}
    &\sup_{t\ge 0} \iint_{\R^n \times \R^n} (f_1^{k+1} + f_2^{k+1})\, dxdv \le \iint_{\R^n \times \R^n} ({}^{\rm in}f_1(\cdot,\cdot)+{}^{\rm in}f_2(\cdot,\cdot))\, dxdv,\\
    &\sup_{t\ge 0} \iint_{\R^n \times \R^n} h_i(f_i^{k+1},v)\, dxdv \le \iint_{\R^n \times \R^n} h_i({}^{\rm in}f_i(\cdot,\cdot),v)\, dxdv.
    \end{split}
\end{equation}
To establish convergence of the iteration, we consider the $L^1_2$-distance between successive iterates:
\begin{equation}\label{eq: ddt xk}
\begin{split}
    &\frac{d}{dt}\iint_{\R^n \times \R^n} (1+|v|^2)|f^{k+1}_i - f^k_i|\,dxdv + \iint_{\R^n \times \R^n} (1+|v|^2)|f^{k+1}_i - f^k_i|\,dxdv \\
    &\quad = \iint_{\R^n \times \R^n} (1+|v|^2)\textnormal{sgn}\lt(f^{k+1}_i - f^k_i) (\calM^{(\ve)}_i [f^k] - \calM^{(\ve)}_i [f^{k-1}]\rt) dxdv,\quad i=1,2.
\end{split}
\end{equation}
Utilizing \eqref{eq: stabimp}, we find that the right-hand side can be estimated as
\begin{equation} \label{eq: stab}
\begin{split}
    &\iint_{\R^n \times \R^n} (1+|v|^2)\textnormal{sgn}\lt(f^{k+1}_i - f^k_i) (\calM^{(\ve)}_i [f^k] - \calM^{(\ve)}_i [f^{k-1}] \rt) dxdv\\
    &\quad \le C_{\ve,m_1,m_2,\gamma_i} \int_{\R^n} \left|n_{f_i^{k}}^{(\ve)} - n_{f_i^{k-1}}^{(\ve)} \right| + \left|{\bf{u}}^{(\ve)}_{f_1^{k},f_2^k} - {\bf{u}}^{(\ve)}_{f_1^{k-1}, f_2^{k-1}}\right| dx\\
    &\quad \le C_{\ve,m_1,m_2,\gamma_1,\gamma_2} \iint_{\R^n \times \R^n} (1+|v|^2)\Big(|f_1^k - f_1^{k-1}| + |f_2^k - f_2^{k-1}| \Big)\, dxdv.
\end{split}
\end{equation}
Thus, if we let $\calX^{k+1}(t) = \sum_{i=1}^2 \|f_i^{k+1} - f_i^k\|_{L^1_2}$, then \eqref{eq: ddt xk} and \eqref{eq: stab} imply
\begin{align*}
    \frac{d}{dt}\calX^{k+1}(t) + \calX^{k+1}(t) \le C_{\ve,m_1,m_2,\gamma_1,\gamma_2}\calX^k(t).
\end{align*}
Since $\calX^k(0) = 0$ for all $k\in \mathbb N$, this implies $\sum_{k=0}^\infty \calX^{k+1}(t)$ is convergent. In particular, for both $i=1,2$, the sequences $\{f_i^{k+1}\}_k$ are Cauchy in $L^\infty_{\rm loc}((0,\infty);L^1_2(\R^n \times \R^n))$. Consequently, we obtain limits $f_i^\ve$ such that 
\begin{align*}
    f_i^{k+1} \underset{k\to\infty}{\longrightarrow} f_i^\ve \quad \text{in}\quad L^\infty_{\rm loc}((0,\infty);L^1_2(\R^n \times \R^n)).
\end{align*}
Moreover, since the mapping $f \mapsto \calM_i^{(\ve)}[f]$ is Lipschitz continuous in the $L^1_2$ norm, the convergence also carries over to the Maxwellians:
\begin{align*}
    \calM_i[f^{k+1}] \underset{k\to\infty}{\longrightarrow} \calM_i[f^\ve] \quad \text{in}\quad L^\infty_{\rm loc} ((0,\infty);L^1_2(\R^n \times \R^n)).
\end{align*}
 We can therefore pass to the limit in the mild formulas for the $(f_1^{k+1},f_2^{k+1})$, in the same way as in \cite[Proposition 3.1]{KS24}, and find that the limit $f_i^\ve$ satisfies the mild formula \eqref{eq: reg mild}. Finally, the uniform estimates established in \eqref{eq: unif} pass to the limit due to the lower semicontinuity of convex integrals with respect to $L^1$ convergence. This completes the proof.
\end{proof}
%%%%%%%%%%%%%%%%%%%%%%%%%%%%%%%%%%%%%%%%%%%%%%%%%%%%%%%%%%%
%
%
%
%
%
%
%
%
%
%
%
%
%
%%%%%%%%%%%%%%%%%%%%%%%%%%%%%%%%%%%%%%%%%%%%%%%%%%%%%%%%%%%
\subsection{Passing to the limit}

In order to obtain compactness for the sequences $\{f_i^\ve\}$, we recall some useful lemmas.

\begin{lemma}
    \cite[Lemma 3.4]{KS24} Let $\{f^m\}$ a family of weak solutions to
    \begin{align*}
        \p_t f^m + v\cdot \nabla_x f^m = G^m,\\
        f^m(0,\cdot,\cdot) = {}^{\rm in}f(\cdot,\cdot),
    \end{align*}
    where $G^m\in L^\infty([0,\infty);L^1(\R^n \times \R^n))$ and
    \begin{align*}
        \intr G^m\,dv \le 0.
    \end{align*}
    Assume also that $\mathscr{E} := \iint_{\R^n \times \R^n} |v|^2 f^m\,dxdv < +\infty$. Then for any $\psi\in C^0(\R^n)$ with $|\psi|\lesssim (1+|v|)^{\sigma}$ and $\sigma\in [0,2)$, the function
    \begin{align*}
        \rho_{\psi}^m := \intr \psi(v) f^m\,dv
    \end{align*}
    satisfies
    \begin{align*}
        \sup_m \sup_{t\in [0,T]}\int_{|x|\ge 2R} |\rho_{\psi}^m|\,dx \le \mathscr{E}^{\sigma/2} \left(\int_{|x|\ge R} \intr {}^{\rm in}f(x,v)\,dvdx \right)^{1-\sigma/2}.
    \end{align*}
\end{lemma}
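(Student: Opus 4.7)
The target inequality has the shape of a Hölder interpolation between the kinetic-energy bound $\mathscr{E}$ and the tail mass of the initial datum, so the plan is to apply Hölder twice—once in velocity and once in space—and then invoke a free-transport argument using the dissipation hypothesis $\int_{\R^n}G^m\,dv\le 0$ to reduce the tail mass of $n^m(t,\cdot)$ to that of $\,{}^{\rm in}n$.

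First, the growth bound $|\psi(v)|\lesssim (1+|v|)^\sigma$ with $\sigma\in[0,2)$, combined with Hölder in $v$ with conjugate exponents $2/\sigma$ and $2/(2-\sigma)$, gives, pointwise in $(t,x)$,
$$|\rho_\psi^m(t,x)|\le C\lt(\int_{\R^n}(1+|v|)^2 f^m\,dv\rt)^{\sigma/2}\lt(\int_{\R^n}f^m\,dv\rt)^{1-\sigma/2}.$$
Integrating over $\{|x|\ge 2R\}$ and applying Hölder again in $x$ with the same exponent pair produces the factor $\mathscr{E}^{\sigma/2}$ and reduces the task to estimating $\int_{|x|\ge 2R}n^m(t,x)\,dx$ by $\int_{|x|\ge R}\,{}^{\rm in}n\,dx$ up to acceptable errors.

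For the tail mass, I would use a backward-characteristic test function. Fixing $t\in[0,T]$ and a smooth spatial cutoff $\phi_R$ with $\phi_R=1$ on $\{|y|\ge 2R\}$ and $\phi_R=0$ on $\{|y|\le 3R/2\}$, set $\Phi(s,x,v):=\phi_R(x+v(t-s))$. Since $\partial_s\Phi+v\cdot\nabla_x\Phi=0$, multiplying the transport equation by $\Phi$ and integrating in $(s,x,v)\in(0,t)\times\R^n\times\R^n$ yields an identity linking $\int\phi_R(x)\,n^m(t,x)\,dx$ to $\iint\phi_R(x+vt)\,{}^{\rm in}f\,dxdv$ plus a source term. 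Splitting the initial-data contribution into the slow-velocity regime $|v|\le R/(2t)$, which confines initial positions to $|x|\ge R$, and its complement, where $\iint_{|v|\ge R/(2t)}\,{}^{\rm in}f\,dxdv\le 4t^2 R^{-2}\mathscr{E}$, yields the reduction that, combined with the Hölder interpolation above, closes the estimate.

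The principal difficulty lies in handling the source term. The hypothesis $\int_{\R^n}G^m\,dv\le 0$ controls only the zeroth velocity moment of the source, whereas $\Phi$ depends genuinely on $v$ through its characteristic. I expect one must either replace $\Phi$ by the purely spatial cutoff $\phi_R(x)$ in the dissipation step—accepting a commutator error of order $R^{-1}\iint|v|f^m\,dxdv$, controllable by $R^{-1}\mathscr{E}^{1/2}\|{}^{\rm in}f\|_{L^1}^{1/2}$ via Cauchy–Schwarz—or exploit the convexity of the mass constraint to absorb the $v$-dependence in $\Phi$. Either way, the resulting loss must be reabsorbed into the Hölder interpolation so that the final bound retains the stated product form $\mathscr{E}^{\sigma/2}\bigl(\int_{|x|\ge R}\,{}^{\rm in}n\,dx\bigr)^{1-\sigma/2}$.
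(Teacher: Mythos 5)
The paper itself supplies no proof for this statement; it simply cites \cite[Lemma 3.4]{KS24}. So I will assess your proposal against what a correct argument must deliver.

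Your skeleton is the right one: H\"older in $v$ with exponents $2/\sigma$ and $2/(2-\sigma)$, H\"older again in $x$, and then a transport estimate to reduce the tail mass of $n^m(t,\cdot)$ to that of the initial datum. Your identification of the source-term obstruction is also correct: a backward-characteristic test function $\Phi(s,x,v)=\phi_R(x+v(t-s))$ carries genuine $v$-dependence, and after the change of variables $y=x+v(t-s)$ the hypothesis $\int_{\R^n}G^m\,dv\le 0$ applies at fixed $x$, not at fixed $y$, so the source integral has no a priori sign. Your fallback---using the purely spatial cutoff $\phi_R(x)$, which makes $\int\phi_R\int G^m\,dv\,dx\le 0$ immediate, and controlling the flux commutator $\iint v\cdot\nabla\phi_R\,f^m\,dvdx\lesssim R^{-1}\mathscr{E}^{1/2}\|f^m\|_{L^1}^{1/2}$ by Cauchy--Schwarz---is the standard resolution.

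However, the last step of your plan, ``the resulting loss must be reabsorbed so that the final bound retains the stated product form,'' is not achievable, and this is a real gap. After integrating the commutator in time you obtain an error of size $TR^{-1}\mathscr{E}^{1/2}\|{}^{\rm in}f\|_{L^1}^{1/2}$ (or $T^2R^{-2}\mathscr{E}$ from the fast-velocity regime in the characteristic version), and no amount of H\"older interpolation eliminates it, because the inequality as stated is in fact too strong to hold. Take $G^m\equiv 0$ and ${}^{\rm in}f(x,v)=g(x)h(v)$ with $g$ supported in the unit ball and $h$ Gaussian: then $\int_{|x|\ge R}\int{}^{\rm in}f\,dvdx=0$ for $R>1$, so the claimed right-hand side vanishes for every $\sigma\in[0,2)$, while $\int_{|x|\ge 2R}n(t,x)\,dx=\int g(y)\int_{|v|\ge(2R-1)/t}h(v)\,dv\,dy>0$ for every $t>0$. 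What your argument actually yields (and what \cite[Lemma 3.4]{KS24} must genuinely assert) is a bound of the form $\mathscr{E}^{\sigma/2}\bigl(\int_{|x|\ge R}\int{}^{\rm in}f\,dvdx+C\,T^2R^{-2}\mathscr{E}\bigr)^{1-\sigma/2}$ or its additive analogue; this still vanishes as $R\to\infty$ for fixed $T$, which is all that is needed for the tightness hypothesis in the averaging lemma. You should therefore state the extra $R^{-1}$-- or $R^{-2}$--order term explicitly rather than assert it can be absorbed.
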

Notice that the above lemma applies to each $f_i^\ve$ with $G_i^\ve := \calM_i^{(\ve)}[f^\ve] - f_i^\ve$, since
\begin{align*}
    \intr G_i^\ve\,dv = \rho_{f_i^\ve}^{(\ve)} - \rho_{f_i^\ve} \le 0.
\end{align*}
Next to recall is the following averaging lemma.
\begin{lemma} \label{lem: avg}
    \cite[Proposition 4.1]{KS24} Let $p\in (1,\infty)$, and $f^m$ solutions to the transport equations
    \begin{align*}
        \begin{cases}
        \p_t f^m + v \cdot \nabla_x f^m = G^m_+ - G^m_-, \\f^m(0,\cdot,\cdot) = {}^{\rm in}f(\cdot,\cdot) \in L^p(\R^n \times \R^n),
        \end{cases}
    \end{align*}
    where $G^m_+, G^m_- \ge 0$. Assume the following:
    \begin{enumerate}[label=(\roman*)]
    \item $\{f^m\}$ is bounded in $L^\infty([0,\infty);L^p(\R^n \times \R^n))$.
    \item $\{(1+|v|^2)f^m\}$ is bounded in $L^\infty([0,\infty);L^1(\R^n \times \R^n))$.
    \item $\{G^m_+\}$ and $\{G^m_-\}$ are bounded in $L_{\rm loc}^1([0,\infty);L^1(\R^n \times \R^n))$.
    \item For each $T>0$, the family $\{\rho_{f^m}(t,\cdot)\}_{(m,t)\in\N\times[0,T]}$ is tight:
    \begin{align*}
    \lim_{R\to \infty} \sup_{0\le t \le T} \sup_{m} \int_{|x|\ge R} \rho_{f^m} \,dx = 0.
    \end{align*}
    \end{enumerate}
    Then, for each $\psi \in C(\R^n)$ verifying $|\psi(v)| \lesssim (1+|v|)^\sigma$ with $\sigma\in [0,2)$, the sequence $\left\{\int_{\R^n}f^{m}\psi(v)\, dv\right\}$ is relatively compact in $L_{\rm loc}^1([0,\infty);L^1(\R^n))$.
\end{lemma}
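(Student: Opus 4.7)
The plan is to reduce the compactness claim to a classical velocity-averaging lemma with $L^1$ source terms by truncating in $v$ and exploiting the growth assumption $|\psi(v)| \lesssim (1+|v|)^\sigma$ with $\sigma < 2$. First, I would fix a large radius $R$ and a smooth cutoff $\chi_R \in C_c^\infty(\R^n)$ with $\chi_R \equiv 1$ on $B_R$ and supported in $B_{2R}$, and decompose $\psi = \psi\chi_R + \psi(1-\chi_R) =: \psi_R + \tilde\psi_R$. For the tail contribution, the growth of $\psi$ together with Chebyshev gives
\[
\left|\int_{\R^n} \tilde\psi_R(v) f^m\,dv\right| \le C R^{\sigma-2}\int_{\R^n}(1+|v|^2)f^m\,dv,
\]
which after integrating in $x$ and using hypothesis (ii) yields
\[
\sup_{t\in[0,T]}\sup_{m}\left\|\int_{\R^n}\tilde\psi_R f^m\,dv\right\|_{L^1(\R^n)} \le C R^{\sigma-2} \xrightarrow{R\to\infty} 0.
\]
Thus it is enough to establish compactness of $\int_{\R^n}\psi_R f^m\,dv$ in $L^1_{\mathrm{loc}}([0,\infty);L^1(\R^n))$ for each fixed $R$.

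For the truncated piece, I would invoke the $L^1$ velocity-averaging lemma of DiPerna--Lions--Meyer. Under (i) $\{f^m\}$ is bounded in $L^\infty_t L^p_{x,v}$ with $p>1$, (ii) the weighted $L^1$ bound, and (iii) $L^1_{\mathrm{loc}}$ control of the signed source $G^m_+ - G^m_-$, this lemma provides relative compactness of $\int \psi_R f^m\,dv$ in $L^1_{\mathrm{loc}}([0,\infty)\times\R^n)$. The $L^p$ bound with $p>1$ is crucial to prevent concentration phenomena, while the splitting of $G^m$ into nonnegative parts (each $L^1$-bounded) lets us bypass the lack of Sobolev regularity of the source.

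Finally, to upgrade local compactness to compactness in $L^1_{\mathrm{loc}}([0,\infty);L^1(\R^n))$, I would use the tightness assumption (iv). Given $\eta > 0$, the preceding lemma (applied with $\psi_R$ in place of $\psi$, which satisfies the same growth bound) ensures the existence of $R'$ such that
\[
\sup_{t\in[0,T]}\sup_{m}\int_{|x|\ge R'}\left|\int_{\R^n}\psi_R f^m\,dv\right| dx \le \eta/2,
\]
so the $L^1(\R^n)$ mass outside a ball is uniformly small. The restriction of the averages to $B_{R'}$ is relatively compact in $L^1([0,T]\times B_{R'})$ by the previous step, hence can be covered by finitely many $L^1$-balls of radius $\eta/2$. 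Combining the tail estimate with this finite covering and then sending $R\to\infty$ gives the desired compactness.

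The main obstacle is the reliance on an $L^1$-source averaging lemma: classical averaging lemmas require $L^p$ source control with $p>1$ to yield fractional Sobolev regularity, but here $G^m$ is only $L^1_{\mathrm{loc}}$. One therefore cannot hope for regularization, only for compactness, and must invoke the equi-integrability version of the averaging lemma, which requires careful verification that $G^m_+$ and $G^m_-$ are separately controlled (not merely their difference), together with a tightness-in-$v$ argument arising from (ii). This delicate input, combined with the interplay between the tightness hypotheses in $x$ and the growth in $v$, is where the technical heart of the proof lies.
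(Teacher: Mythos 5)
This lemma is not proved in the present paper: it is quoted verbatim from \cite[Proposition~4.1]{KS24}, so there is no in-paper argument to compare yours against. Judging your sketch on its own: the architecture is the standard and correct one (truncate $\psi$ in $v$, kill the tail via hypothesis (ii) and $\sigma<2$, get local compactness of the truncated averages from an averaging lemma, then upgrade to compactness in $L^1(\R^n_x)$ via hypothesis (iv)), and your observations that the $L^p$ bound with $p>1$ prevents concentration and that an $L^1$ source forbids any Sobolev gain are both on target.

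The one step that needs fixing is the invocation of DiPerna--Lions--Meyer. That result is an $L^p$-regularity theorem, $1<p\le 2$, and requires the source term to lie in $L^p$ as well; with a right-hand side only in $L^1_{\rm loc}$ it does not apply directly. The standard device is a resolvent splitting $f^m=\lambda(\lambda+\partial_t+v\cdot\nabla_x)^{-1}f^m+(\lambda+\partial_t+v\cdot\nabla_x)^{-1}g^m$ with $g^m:=G^m_+-G^m_-$: the first piece solves a transport equation whose right-hand side is again bounded in $L^p$, so its compactly supported velocity averages are compact by the classical averaging lemma, while the second piece has $L^1$-norm $O(\lambda^{-1})$ uniformly in $m$; sending $\lambda\to\infty$ then yields relative compactness of $\int\psi_R f^m\,dv$. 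This is precisely where (i) does the work you attribute to it. Two smaller points: your worry that $G^m_+$ and $G^m_-$ must be controlled separately is not load-bearing here, since the splitting argument only uses an $L^1$ bound on $g^m=G^m_+-G^m_-$, which follows from (iii); and the appeal to ``the preceding lemma'' at the end is unnecessary, because $\psi_R$ is bounded with compact support, so $|\int\psi_R f^m\,dv|\le\|\psi_R\|_{L^\infty}\,\rho_{f^m}$ and hypothesis (iv) directly gives the spatial tightness of the truncated averages.
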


%%%%%%%%%%%%%%%%%%%%%%%%%%%%%%%%%%%%%%%%%%%%%%%%%%%%%%%%%%%
%
%
%
%
%
%
%
%
%
%
%
%
%
%%%%%%%%%%%%%%%%%%%%%%%%%%%%%%%%%%%%%%%%%%%%%%%%%%%%%%%%%%%
\subsection{Proof of Theorem \ref{thm: exist}}
We begin by observing that the uniform bounds on the kinetic entropy of the regularized solutions $f^\ve = (f_1^\ve, f_2^\ve)$ imply the existence of a limit function $f = (f_1, f_2)$ such that, up to a subsequence,
\begin{align*}
    f_i^\ve \rightharpoonup f_i \quad \text{weakly$^*$ in }L^\infty((0,\infty);L^{1+\frac{2}{d_i}}(\R^n \times \R^n)).
\end{align*}

A direct application of Lemma \ref{lem: avg} shows
\[
   n_{f_i^\ve} \to n_{f_i} \quad \mbox{and} \quad n_{f_i^\ve} {\bf{u}}_{f_i^\ve} \to n_{f_i} {\bf{u}}_{f_i}
\]
a.e. in $(t,x)$ and strongly in $L^{p_i}((0,T);L^{p_i}(\R^n))$ for any $p_i \in [1,\frac{2\gamma_i}{\gamma_i + 1})$ and all $T>0$. Recalling the definitions of the regularized quantities $n^{(\ve)}$, ${\bf{u}}^{(\ve)}$, and the truncated Maxwellians $\calM_i^{(\ve)}$, we deduce from the strong convergence of the moments that
\[
    \calM_i^{(\ve)}[f^\ve] \to \calM_i[f] \quad \text{a.e. in }P_{t,x} \times \R^n, 
\]
where 
\[
P_{t,x}:= \{(t,x)\in [0,T]\times\R^n: (n_{f_1} + n_{f_2})(t,x) > 0\}.
\]
On the complementary set $(([0,T]\times\R^n) \setminus P_{t,x}) \times \R^n$, we can directly estimate the convergence in $L^1$ using the fact that both Maxwellians vanish when densities vanish. Indeed,
    \begin{align*}
      \iiint_{(([0,T]\times\R^n) \setminus P_{t,x}) \times \R^n} \left|\calM_i^{(\ve)}[f^\ve] - \calM_i[f]\right|\, dxdvdt  
        &\le \iiint_{(([0,T]\times\R^n) \setminus P_{t,x}) \times \R^n} \calM_i^{(\ve)}[f^\ve] + \calM_i[f]\, dxdvdt \\
        &\le \iint_{([0,T]\times\R^n)\setminus P_{t,x}} n_{f_i}^{(\ve)} + n_{f_i} \, dxdt \\
        &\le \iint_{([0,T]\times\R^n)\setminus P_{t,x}} 2n_{f_i} \,dxdt \cr
        &=0,
    \end{align*}
 where we used the definition of $P_{t,x}$ and the monotonicity of the regularization $n_{f_i}^{(\ve)} \le n_{f_i}$.

Thus, combining the two regions, we conclude that up to a subsequence
\[
\calM_i^{(\ve)}[f^\ve] \to \calM_i[f] \quad \mbox{a.e. in } [0,T]\times\R^n \times \R^n
\]
for any $T>0$. A diagonal argument permits passing to a further subsequence such that 
\[
\calM_i^{(\ve)}[f^\ve] \to \calM_i[f] \quad \mbox{a.e. in } [0,\infty)\times\R^n \times \R^n.
\]
As a result of this pointwise convergence and the bounds
\begin{align*}
    \iint_{\R^n \times \R^n} h_i(\calM_i^{(\ve)}[f^\ve],v) \,dxdv &= \intr \frac{m_i}{2}n_{f_i^\ve}^{(\ve)}|{\bf{u}}_{f^\ve}^{(\ve)}|^2 + \frac{m_i^{\frac{n(\gamma_i-1)}{2}}}{\gamma_i-1} (n_{f_i^\ve}^{(\ve)})^{\gamma_i}\,dx\\
    &\le \intr \frac{m_i}{2}n_{f_i^\ve}|{\bf{u}}_{f^\ve}|^2 + \frac{m_i^{\frac{n(\gamma_i-1)}{2}}}{\gamma_i-1} (n_{f_i^\ve})^{\gamma_i}\,dx\\
    &= \iint_{\R^n \times \R^n} h_i(\calM_i[f^\ve],v)\,dxdv \\
    &\le \iint_{\R^n \times \R^n} h_i(f_i^\ve,v)\,dxdv \\
    &\le \iint_{\R^n \times \R^n} h_i({}^{\rm in}f_i^\ve,v)\,dxdv,
\end{align*}
we deduce from the Vitali convergence theorem that
\begin{align*}
    \calM_i^{(\ve)}[f^\ve] \to \calM_i[f] \quad \text{in}\quad L^1_{\rm loc}((0,\infty); L^1(\R^n \times \R^n)).
\end{align*}
With this strong convergence in hand, we may pass to the limit in the mild formulation of the regularized problem, see \cite[Lemma 4.3]{KS24} for details. That is, for almost every $(t,x,v)$,
\begin{align*}
    &f_i^\ve \to f_i \quad \text{in} \quad L^1_{\rm loc}((0,\infty);L^1(\R^n \times \R^n)),\\
    &f_i(t,x,v) = e^{-t} \cdot {}^{\rm in}f_i(x-vt,v) + \int_0^t e^{-t+s} \calM_i[f](s,x-v(t-s),v)\,ds.
\end{align*}
Hence, we obtain a global-in-time mild solution to the original BGK-type system \eqref{main}.

It remains to verify the additional properties listed in \eqref{eq: prop f}, including the kinetic entropy inequality and moment bounds. Since the derivation of these estimates follows the same line of reasoning as in \cite[Sections 4--5]{KS24}, we omit the details and refer the reader to that work for a complete analysis.
%%%%%%%%%%%%%%%%%%%%%%%%%%%%%%%%%%%%%%%%%%%%%%%%%%%%%%%%%%%
%
%
%
%
%
%
%
%
%
%
%
%
%
%%%%%%%%%%%%%%%%%%%%%%%%%%%%%%%%%%%%%%%%%%%%%%%%%%%%%%%%%%%

%%%%%%%%%%%%%%%%%%%%%%%%%%%%%%%%

%	\section*{Acknowledgements}

%%%%%%%%%%%%%%%%%%%%%%%%%%%%%%%%%%%%%%%%%%%%%%%%%%%%%%%%%%%%%%%%%%%%%%%%%%%%%%%%%
%
%
%                        thebibliography
%
%
%%%%%%%%%%%%%%%%%%%%%%%%%%%%%%%%%%%%%%%%%%%%%%%%%%%%%%%%%%%%%%%%%%%%%%%%%%%%%%%%%


\begin{thebibliography}{10}

\bibitem{AAP02} P.  Andries, K. Aoki, and B. Perthame, A consistent BGK-type model for gas mixtures, J. Stat. Phys., 106, (2002), 993--1018.

\bibitem{ARS} U. M. Ascher, S. J. Ruuth, and R. J. Spiteri, Implicit-explicit Runge--Kutta methods for time-dependent partial differential equations, Appl. Numer. Math., 25, (1997), 151--167.

 
\bibitem{BKPY21}
G.-C. Bae, C. Klingenberg,  M. Pirner, and S.-B. Yun,  BGK model of the multi-constituents Uehling Uhlenbeck equation, Kinetic and related models, 14, (2021), 25--44.

\bibitem{BKPY22} G.-C. Bae, C. Klingenberg, M. Pirner, and S.-B. Yun,  BGK model for two-component gases near a global Maxwellian, SIAM J. Math. Anal., 55, (2023), 1007--1047.

\bibitem{BV05} F. Berthelin and A. Vasseur, From kinetic equations to multidimensional isentropic gas dynamics before shocks, SIAM J. Math. Anal., 36, (2005), 1807--1835.

\bibitem{BGK54} P. L. Bhatnagar, E. P. Gross, and M. L. Krook, A model for collision processes in gases. I. Small amplitude processes in charged and neutral one-component systems, Phys. Rev., 94, (1954), 511--525. 

\bibitem{BMS18}
M. Bisi, R. Monaco, and A. J. Soares, A BGK model for reactive mixtures of polyatomic gases with
continuous internal energy, J. Phys. A, 51, (2018),1--29.

\bibitem{BT20} M. Bisi and R. Travaglini,  A BGK model for mixtures of monoatomic and polyatomic gases with discrete internal energy, Physica A: Statistical Mechanics and its Applications, 547, (2020),  124441.


\bibitem{BBGSP18}
A. V. Bobylev, M. Bisi, M. Groppi, G. Spiga, and I. F. Potapenko,   A general consistent BGK model for
gas mixtures, Kinet. Relat. Models, 11, (2018), 1377--1393.

\bibitem{BPR} S. Boscarino, L. Pareschi, G. Russo, Implicit-explicit methods for evolutionary partial differential equations, Math. Model. Comput., 24, Society for Industrial and Applied Mathematics (SIAM), Philadelphia, PA, [2025], \copyright 2025. ix+323 pp

\bibitem{B99} F. Bouchut, Construction of BGK models with a family of kinetic entropies for a given system of conservation laws, J. Statist. Phys., 95, (1999), 113--170.

 
	\bibitem{Brull15} S. Brull,  An ellipsoidal statistical model for gas mixtures, Communications in Mathematical Sciences, 13, (2015), 1--13.
	
		\bibitem{BPS12}
	S. Brull, V. Pavan, and J. Schneider,  Derivation of a BGK model for mixtures, Eur. J. Mech. B-Fluid, 33, (2012), 74--86.
	
	\bibitem{CBGR22}
S.Y. Cho, S. Boscarino, M. Groppi, and G. Russo, Conservative semi-Lagrangian schemes for a general
consistent BGK model for inert gas mixtures, Commun. Math. Sci., 20, (2022), 695--725.

\bibitem{CCJ21} J. A. Carrillo, Y.-P. Choi, and J. Jung, Quantifying the hydrodynamic limit of Vlasov-type equations with alignment and nonlocal forces, Math. Models Methods Appl. Sci., 31, (2021), 327--408.

\bibitem{CCK16} J. A. Carrillo, Y.-P. Choi, and T. K. Karper, On the analysis of a coupled kinetic-fluid model with local alignment forces, Ann. Inst. H. Poincar\'e Anal. Non Lin\'eaire, 33, (2016), 273--307.

\bibitem{CG06} J. A. Carrillo and T. Goudon, Stability and asymptotic analysis of a fluid-particle interaction model, Comm. Partial Differential Equations, 31, (2006), 1349--1379.


 

\bibitem{CH24} Y.-P. Choi and B.-H. Hwang, Global existence of weak solutions to a BGK model relaxing to the barotropic Euler equations, Nonlinear Anal., 238, (2024), 113414. 
 
\bibitem{CJ21} Y.-P. Choi and J. Jung, Asymptotic analysis for a Vlasov-Fokker-Planck/Navier-Stokes system in a bounded domain, Math. Models Methods Appl. Sci., 31, (2021), 2213--2295.


\bibitem{CJ23} Y.-P. Choi and J. Jung, On the dynamics of charged particles in an incompressible flow: from kinetic-fluid to fluid-fluid models, Commun. Contemp. Math., 25, 2250012, (2023).

\bibitem{CKP20} 
A. Crestetto, C. Klingenberg, and M. Pirner, Kinetic/fluid micro-macro numerical scheme for a two
 component gas mixture, Multiscale Model. Simul., 18, (2020), 970--998.



\bibitem{D79} C. M. Dafermos, The second law of thermodynamics and stability, Arch. Rational Mech. Anal., 70, (1979), 167--179.

\bibitem{FJ} F. Filbet and S. Jin, An asymptotic preserving scheme for the ES-BGK model of the Boltzmann equation, J. Sci. Comput., 46, (2011), 204--224.

\bibitem{GJV04a} T. Goudon, P.-E. Jabin, and A. Vasseur, Hydrodynamic limit for the Vlasov-Navier-Stokes equations: I. Light particles regime, Indiana Univ. Math. J., 53, (2004), 1495--1515.

\bibitem{GJV04b} T. Goudon, P.-E. Jabin, and A. Vasseur, Hydrodynamic limit for the Vlasov-Navier-Stokes equations: II. Fine particles regime, Indiana Univ. Math. J., 53, (2004), 1517--1536. 

	\bibitem{GK56}
E. P. Gross and M. Krook,  Model for collision processes in gases: Small-amplitude oscillations of charged two-component systems, Phys. Rev., (1956), 102:593.

\bibitem{GMS11} 
M. Groppi, S. Monica, and G. Spiga, A kinetic ellipsoidal BGK model for a binary gas mixture, Europhysics Letters, 96, (2011), 64002.

	\bibitem{GRS09}
M. Groppi, S. Rjasanow, and G. Spiga,  A kinetic relaxation approach to fast reactive mixtures: shock wave structure, J. Stat. Mech.-Theory Exp., (2009), P10010.

\bibitem{GRS18}
M. Groppi, G. Russo, and G. Stracquadanio, Semi-Lagrangian Approximation of BGK Models for Inert and Reactive Gas Mixtures, In: P. Goncalves, A. Soares (eds.). "From Particle Systems to Partial Differential Equations V", Springer Proceedings in Mathematics and Statistics, 258, (2018), 53--80.

\bibitem{HKM24} D. Han-Kwan and D. Michel, On hydrodynamic limits of the Vlasov-Navier-Stokes system, Mem. Amer. Math. Soc., 302, (2024), no. 1516, v+115 pp.

 \bibitem{HHM17} J. R. Haack, C. D. Hauck, and M. S.  Murillo, A conservative, entropic multispecies BGK model, J. Stat. Phys., 168, (2017), 826--856.
 
 \bibitem{ZH} J. Hu and X. Zhang, On a class of implicit-explicit Runge-Kutta schemes for stiff kinetic equations preserving the Navier-Stokes limit, J. Sci. Comput., 73, (2017), 797--818.

\bibitem{HLY24} B.-H. Hwang, M.-S. Lee, and S.-B. Yun, Relativistic BGK model for gas mixtures, J. Stat. Phys., 191, (2024), 59.

 
	
 \bibitem{KLY21} D. Kim, M.-S. Lee, and S.-B. Yun,  Stationary BGK models for chemically reacting gas in a slab, J. Stat. Phys., 184, (2021), 1--33.
	
 

\bibitem{KMT15} T. K. Karper, A. Mellet, and K. Trivisa, Hydrodynamic limit of the kinetic Cucker-Smale flocking model, Math. Models Methods Appl. Sci., 25, (2015), 131--163.


 
	\bibitem{KP18} C. Klingenberg and  M. Pirner, Existence, uniqueness and positivity of solutions for BGK models for mixtures, J. Differential Equations, 264, (2018), 702--727.

\bibitem{KPP17}
C. Klingenberg, M. Pirner, and G. Puppo, A consistent kinetic model for a two-component mixture with an application to plasma, Kinet. Relat. Models, 10, (2017), 445--465. 


 
\bibitem{KS24} D. Koo and S. Song, Global mild solutions to a BGK model for barotropic gas dynamics, SIAM J. Math. Anal., to appear. 


\bibitem{LPT91} P.-L. Lions, B. Perthame, and E. Tadmor, Formulation cin\'{e}tique des lois de conservation scalaires multidimensionnelles,
C.R. Acad. Sci. Paris, S\'{e}rie I Math., 312,  (1991) 97--102.


\bibitem{LPT94} P.-L. Lions, B. Perthame, and E. Tadmor, A kinetic formulation of multidimensional scalar conservation laws and related equations, J. Amer. Math. Soc., 7, (1994), 169--191.

\bibitem{LPT94-2} P.-L. Lions, B. Perthame, and E. Tadmor, Kinetic formulation of the isentropic gas dynamics and $p$-systems, Commun. Math. Phys., 163, (1994), 415--431

 
\bibitem{MV08} A. Mellet and A. Vasseur, Asymptotic analysis for a Vlasov-Fokker-Planck/compressible Navier-Stokes system of equations, Commun. Math. Phys., 281, (2008), 573--596.

\bibitem{PP} S. Pieraccini and G. Puppo, Implicit-explicit schemes for BGK kinetic equations. J. Sci. Comput., 32, (2007), 1--28. 	

\bibitem{Pirner18} M. Pirner,  Existence and uniqueness of mild solutions for BGK models for gas mixtures of polyatomic molecules, arxiv:1806.10603.

\bibitem{CWS} C.-W. Shu, Essentially non-oscillatory and weighted essentially non-oscillatory schemes for hyperbolic conservation laws. In: Advanced Numerical Approximation of Nonlinear Hyperbolic Equations, pp. 325--432, Springer-Verlag, Berlin, (1998).


\bibitem{SWYZ23} Y. Su, G. Wu, L. Yao, and Y. Zhang, Hydrodynamic limit for the inhomogeneous incompressible Navier-Stokes-Vlasov equations, J. Differential Equations, 342, (2023), 193--238.






%%%%%%%%%%%%%%%numerical method%%%%%%%%%%%%%%%%%%








\end{thebibliography}
\end{document}